\newtheorem{thm}{Theorem}
\newtheorem{prop}[thm]{Proposition}
\newtheorem{lemma}[thm]{Lemma}
\theoremstyle{definition}
\newtheorem{defi}[thm]{Definition}
\newtheorem{cor}[thm]{Corollary}
\theoremstyle{remark}
\newtheorem{rem}[thm]{Remark}
\newcommand{\alg}[1]{\mathfrak{{#1}}}
\newcommand{\tr}{\text{tr}}
\newcommand{\str}{\text{str}}
\newcommand{\rn}[1]{\mathbb{R}^{{#1}}}
\newcommand{\co}[2]{\left[{#1},{#2}\right]} 
\newcommand{\aco}[2]{\left\{{#1},{#2}\right\}}
\newcommand{\eref}[1]{(\ref{#1})} 
\newcommand{\pderi}[2]{ { \frac{\partial {#1} }{\partial {#2} } } }
\newcommand{\pd}[2]{ { \frac{\partial {#1} }{\partial {#2} } } }
\newcommand{\sh}[1]{\mathcal{#1}}
\newcommand{\C}{{\mathbb{C}}}
\newcommand{\R}{{\mathbb{R}}}
\newcommand{\Z}{{\mathbb{Z}}}
\DeclareMathOperator{\Hom}{Hom}
\DeclareMathOperator{\End}{End}
\newcommand{\WA}{{\mathcal{A}}}
\newcommand{\gf}{\C}
\newcommand{\tW}{ {[[u]]\otimes_{\gf[u]} W} }  
\newcommand{\p}{\partial}
\newcommand{\pl}{\overleftarrow{\p}}
\newcommand{\pr}{\overrightarrow{\p}}
\newcommand{\pdl}[2]{ { \frac{\pl {#1} }{\p {#2} } } }
\begin{document}
\title{Cyclic Cohomology of the Weyl Algebra}

\author{Thomas Willwacher}
\address{Department of Mathematics, ETH Zurich}
\email{thomas.willwacher@math.ethz.ch}
\thanks{The author was partially supported by the Swiss National Science Foundation (grant 200020-105450)}
\subjclass[2000]{53D55 }
\date{}
\keywords{ }

\maketitle

\begin{abstract}
We give an explicit formula for $\alg{sp}_{2n}$-basic representatives of the cyclic cohomology of the Weyl algebra $HC^{\bullet}(\WA_{2n})$. This paper can be seen as cyclic addendum to the paper \cite{felder} by Feigin, Felder and Shoikhet, where the analogous Hochschild case was treated.

As an application, we prove a generalization of a Theorem of Nest and Tsygan concerning the relation of the Todd class and the cyclic cohomology of the differential operators on a complex manifold.
\end{abstract}


\section{Introduction and Notations}
The polynomial Weyl algebra $\WA_{2n}$ is the algebra of polynomial differential operators on $\rn{n}$. It is isomorphic to the algebra of polynomials on $\rn{2n}$ with product the Moyal product associated to the standard symplectic structure
\[
\omega = \sum_{j=1}^n dp_j\wedge dq_j = \sum_{j=1}^n dy_{2j-1}\wedge dy_{2j}.
\]
Here $y_1=p_1,y_2=q_2, \dots ,y_{2n-1}=p_{n},y_{2n}=q_{n}$ are the standard coordinates on $\rn{2n}$.

Note that the symplectic Lie algebra $\alg{sp}_{2n}$ is a Lie subalgebra of $\WA_{2n}$, namely the Lie subalgebra of homogeneous quadratic polynomials.

We will be interested in the Hochschild and cyclic cohomology of $\WA_{2n}$. The definition of the Hochschild cochain complex $C^\bullet(\WA_{2n},\WA_{2n}^*)$ is standard and recited in the appendix. On the other hand, there are several distinct versions of cyclic cohomology. However, there is a unified treatment due to Getzler. The cyclic cochain complex is given by
\[
CC_W^\bullet(\WA_{2n}) = C^\bullet(\WA_{2n},\WA_{2n}^*)\tW
\]
where $W$ is some $\gf[u]$-module with $u$ being a formal variable of degree 2.
The differential is given by $d+uB$, where $d$ is the Hochschild differential and $B$ is the Connes differential, both of which are defined in the appendix. As the module $W$ varies, one gets several distinct cyclic cohomology theories. For example, in the special case where $W=\gf$ is the trivial $\gf[u]$-module (i.e., $u$ acts as 0), one recovers the usual Hochschild complex.

In the cases of interest in this paper, the Hochschild and cyclic cohomology of the Weyl algebra are well known:
\begin{prop}
The Hochschild cohomology of $\WA_{2n}$ satisfies
\begin{align*}
HH^{j}(\WA_{2n},\WA_{2n}^*) &\cong
\begin{cases}
\C & \text{for $j=2n$}  \\
0 \ & \text{otherwise}
\end{cases}
\end{align*}
If $W$ is $\gf[u]$ or $\gf(u)$, then
\begin{align*}
HC^{\bullet}(\WA_{2n}) &\cong HH^{\bullet}(\WA_{2n},\WA_{2n}^*)\tW.
\end{align*}
\end{prop}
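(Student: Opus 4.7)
The plan is to first establish the Hochschild cohomology claim and then deduce the cyclic statement via a spectral sequence argument.

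For the Hochschild part, I would exploit the fact that $\WA_{2n}$ is a flat deformation of the polynomial algebra $\C[y_1,\dots,y_{2n}]$ via the Moyal product. The Weyl algebra admits a finite Koszul bimodule resolution of length exactly $2n$, which immediately bounds the Hochschild cohomological dimension by $2n$. To compute $HH^\bullet(\WA_{2n},\WA_{2n}^*)$ I would pass to Hochschild homology via the canonical duality $HH^\bullet(A,A^*)\cong HH_\bullet(A)^*$ (which comes from $\Hom(A^{\otimes\bullet},A^*)\cong(A^{\otimes(\bullet+1)})^*$). The Hochschild homology of the Weyl algebra is a classical calculation: the HKR-type theorem for deformation quantizations identifies it with the de Rham cohomology of $\R^{2n}$ shifted so that differential forms of degree $k$ sit in homological degree $2n-k$, which is concentrated in degree $2n$ with value $\C$, generated essentially by the class of the Liouville volume $\omega^n$. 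Dualizing gives the stated result for $HH^\bullet$.

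For the cyclic part, I would use the standard $u$-adic spectral sequence on the complex $CC_W^\bullet(\WA_{2n}) = C^\bullet(\WA_{2n},\WA_{2n}^*)\tW$ with differential $d+uB$. Its $E_1$ page is $HH^\bullet(\WA_{2n},\WA_{2n}^*)\otimes_\gf (u\text{-graded pieces of }\tW)$, with induced differential coming from $uB$. Since $B$ lowers Hochschild cohomological degree by one and the Hochschild cohomology is concentrated in the single degree $2n$, the induced differential is forced to vanish, so the spectral sequence degenerates at $E_1$. This yields
\[
HC^\bullet(\WA_{2n}) \;\cong\; HH^\bullet(\WA_{2n},\WA_{2n}^*)\tW.
\]
For $W=\gf[u]$ and $W=\gf(u)$ the module $\tW$ is flat over $\gf$, so no Tor terms appear, and convergence of the spectral sequence is straightforward: the filtration is exhaustive and bounded below in each total degree (and complete for $W=\gf[u]$).

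The main obstacle I anticipate is not the logical skeleton but the careful interpretation of the coefficient module $\WA_{2n}^*$ and the topology on the Hochschild complex in the precise framework used by the paper. Depending on whether cochains are taken as arbitrary multilinear maps or as continuous/polynomial ones, $\WA_{2n}^*$ may have to be understood as a formal completion, and one must verify that the Koszul resolution is still adapted to computing $\mathrm{Ext}_{\WA_{2n}^e}^\bullet(\WA_{2n},\WA_{2n}^*)$ in this category. Once this technical setup is in place, the concentration statement follows from HKR and the cyclic part is a formal consequence of degeneration.
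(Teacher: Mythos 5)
Your argument is correct, and in fact the paper offers no proof of this proposition at all: it is quoted as a well-known fact, resting on exactly the standard ingredients you reconstruct (the duality $HH^{\bullet}(A,A^*)\cong HH_{\bullet}(A)^*$ over a field, Sridharan's/the Koszul computation of $HH_{\bullet}(\WA_{2n})$ concentrated in degree $2n$, and degeneration of the $u$-filtration spectral sequence forced by that concentration). Your convergence remarks are also sound, since $C^{j}=0$ for $j<0$ makes each total degree of $C^{\bullet}(\WA_{2n},\WA_{2n}^*)\tW$ involve only finitely many powers of $u$ for both $W=\gf[u]$ and $W=\gf(u)$.
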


It is also not too difficult to write down explicit representatives for the above cohomology classes in the normalized Hochschild or cyclic complexes, using the fact that $\WA_{2n}=(\WA_2)^{\otimes n}$. However, for applications in deformation quantization and index theory we would like this representative, say $\tau$, to have the additional property that it is $\alg{sp}_{2n}$-basic, namely
\begin{equation}
\label{equ:iotatauis0}
\iota_a \tau = 0
\end{equation}
for all $a\in \alg{sp}_{2n}\subset \WA_{2n}$, and with the interior product $\iota_a$ as defined in Appendix \ref{app:morestruct}. This condition is important when globalizing local constructions using formal geometry. The details will be clarified later, in particular in Sections \ref{sec:fedreminder} and \ref{sec:cmplxformal}. For now, just note that it implies that $\tau$ is $\alg{sp}_{2n}$-invariant, since $a\in \alg{sp}_{2n}$ acts on $\tau$ as
\[
L_a \tau = \aco{d+uB}{\iota_a}\tau =\left( (d+uB)\iota_a+\iota_a (d+uB) \right)\tau = 0+0 =0.
\]
Remarkably, it is quite difficult to write down representatives $\tau$ for the cohomology that satisfy \eref{equ:iotatauis0}. For the Hochschild case ($W=\C$) a solution has been found by Feigin, Felder and Shoikhet (=:FFS) using Shoikhet's proof of Tsygan's formality conjecture. In this paper, we will give representatives satisfying \eref{equ:iotatauis0} for the cyclic case. The first main result is:

\begin{thm}
\label{thm:tauw}
The cyclic cochain
\[
\tau_w = e^{-u\iota_\omega}\tau_{2n}\otimes w
\]
for an arbitrary $w\in W\setminus \{0\}$ is a non-trivial cyclic cocycle and satisfies $\iota_a \tau_w=0$ for all $a \in \alg{sp}_{2n}$.
\end{thm}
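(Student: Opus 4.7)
The plan is to establish three things in order: the cocycle identity, the $\mathfrak{sp}_{2n}$-basic property, and non-triviality, with all three reduced to properties of the FFS cocycle $\tau_{2n}$ together with commutation relations for $\iota_\omega$.

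First, the cocycle property. The key ingredient I expect to isolate is a pair of graded commutation relations on the Hochschild complex of $\WA_{2n}$, namely
\[
[d, \iota_\omega] = B, \qquad [B,\iota_\omega] = 0
\]
(up to signs/normalizations dictated by the conventions of the appendix). The first of these is the cochain-level reflection of the fact that $\omega$ generates the non-commutative part of the Moyal product: contracting two slots of a cochain with $\omega$ symmetrizes them, which is precisely the difference between the Hochschild differential and the Connes operator. Granted these identities, a standard generating-function computation gives $[d, e^{-u\iota_\omega}] = -uB\, e^{-u\iota_\omega}$, so that
\[
(d+uB)\,e^{-u\iota_\omega}\tau_{2n}\otimes w
= e^{-u\iota_\omega}(d\tau_{2n})\otimes w
+ \bigl(-uB\,e^{-u\iota_\omega} + uB\,e^{-u\iota_\omega}\bigr)\tau_{2n}\otimes w = 0,
\]
using $d\tau_{2n}=0$ from FFS. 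Note $e^{-u\iota_\omega}\tau_{2n}$ is a finite sum since iterating $\iota_\omega$ eventually exhausts the arguments of $\tau_{2n}$, so there are no convergence issues.

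Second, the $\mathfrak{sp}_{2n}$-basic property. The FFS cocycle satisfies $\iota_a\tau_{2n}=0$ for $a\in\mathfrak{sp}_{2n}$. Since $\omega$ is by definition $\mathfrak{sp}_{2n}$-invariant, I expect a general commutator formula of the form $[\iota_a,\iota_b] = \pm\iota_{[a,b]}$ (or its extension to $\iota_\omega$) to yield $[\iota_a,\iota_\omega]=0$. This forces $\iota_a$ to commute with $e^{-u\iota_\omega}$, so
\[
\iota_a \tau_w = e^{-u\iota_\omega}\iota_a\tau_{2n}\otimes w = 0.
\]

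Third, non-triviality. Reducing modulo $u$, the cocycle $\tau_w$ becomes $\tau_{2n}\otimes w$, which is a non-zero element of $HH^{2n}(\WA_{2n},\WA_{2n}^*)\otimes W$ by the proposition quoted in the introduction and FFS. If $\tau_w$ were a coboundary, $\tau_w = (d+uB)\sigma$, then reducing modulo $u$ would make $\tau_{2n}\otimes w$ Hochschild-exact, a contradiction. Hence $[\tau_w]\neq 0$ in $HC^\bullet(\WA_{2n})$.

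The main obstacle is clearly isolating and proving the relation $[d,\iota_\omega]=B$ with correct signs. Even the precise definition of $\iota_\omega$ (as a second-order operator on cochains, given $\omega$ is a $2$-tensor rather than an algebra element) requires unpacking from the appendix, and verifying the identity involves tracking how the antisymmetric part of the Moyal product interleaves with the face maps that build $d$ and the cyclic permutation that builds $B$. All remaining steps are formal consequences of this identity and of the FFS properties of $\tau_{2n}$.
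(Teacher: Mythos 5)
Your overall architecture matches the paper's: reduce everything to properties of $\tau_{2n}$ plus commutation relations for $\iota_\omega$. The basicness argument (via $\co{\iota_a}{\iota_\omega}=0$) is exactly the paper's. But the central identity you rely on is not correct as stated, and this is a genuine gap rather than a sign issue.

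You posit the \emph{operator} identity $\co{d}{\iota_\omega}=B$ on the whole Hochschild complex. This is false. The paper instead \emph{defines} $L_\omega:=\co{d}{\iota_\omega}$ and computes (using $L_a=\aco{d}{\iota_a}$ and $\co{L_a}{\iota_b}=\iota_{\co{b}{a}}$) that $L_\omega=\sum_{i,j}\omega_{ij}L_{y_i}\iota_{y_j}$, a differential-operator-type expression that manifestly differs from the Connes operator $B$ (which inserts the unit and cyclically sums) on a generic cochain. What is true, and what the proof actually needs, is the much weaker statement $(B-L_\omega)\tau_{2n}=0$, i.e.\ the two operators agree \emph{on the specific FFS cocycle}. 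This is Theorem \ref{thm:DBtauis0}, and it is the technical heart of the paper: one lemma computes $B\tau_{2n}$ by summing the cyclic images via Lemma \ref{lem:22} and integrating over the configuration space of points on the circle, and a second lemma computes $L_\omega\tau_{2n}$ by a determinant manipulation on $\pi_{2n}$; both reduce to the same integral formula. Without this computation your first step does not close: granting only $d\tau_{2n}=0$ and $\co{L_\omega}{\iota_\omega}=0$, the conjugation $e^{u\iota_\omega}de^{-u\iota_\omega}$ yields $d\tau_w=-ue^{-u\iota_\omega}L_\omega\tau_{2n}\otimes w$, and you still must identify $L_\omega\tau_{2n}$ with $B\tau_{2n}$. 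So the step you describe as ``the main obstacle'' is indeed the main obstacle, but it cannot be discharged by an operator identity; it requires the explicit integral computation.

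A secondary point: your non-triviality argument by reduction modulo $u$ works for $W=\gf$ or $W=\gf[u]$, but the theorem is stated for an arbitrary $\gf[u]$-module $W$, including $W=\gf(u)$ where $u$ acts invertibly and the $u=0$ fibre is zero. The paper avoids this by pairing $\tau_w$ with the explicit cycle $c=1\otimes p_1\wedge q_1\wedge\dots\wedge p_n\wedge q_n$, on which $\tau_w(c)=(2n)!\,w\neq 0$ while any coboundary vanishes; that argument is uniform in $W$.
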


The notation will be explained in the next sections. In particular, $\tau_{2n}$ is the cocycle found by FFS and the operator $\iota_\omega$ is defined in \eref{equ:iotaomegadef}.

Furthermore, as described in \cite{felder} and Section \ref{sec:lacohomcc}, there is natural map $ev_1$ from the Hochschild cohomology of an algebra $A$ to the Lie algebra cohomology of the Lie algebra $A$. FFS have shown that their cocycle is mapped to the $2n$-component of a characteristic class defined by the product of the $A$-roof genus and the Chern character. Our second main result is that the whole characteristic class (all components) is the image of the cocycle $\tau_w$.

\begin{thm}
\label{thm:tau2kisach}
The image of $\tau_{2k}^r=\frac{1}{(n-k)!}(-\iota_\omega)^{n-k}\tau_{2n}^r$ under $ev_1$ defines the same relative Lie algebra cohomology class as the $2k$ component of $\hat{A}Ch$, up to a sign, i.e.
\[
[ev_1(\tau_{2k}^r)] = (-1)^k [\hat{A}Ch]_{2k} \in H^{\bullet}(\alg{g},\alg{h})
\]
for all $k=0,..,n$.
\end{thm}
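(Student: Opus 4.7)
My plan is to bootstrap from the FFS identification at the top degree, $[ev_1(\tau_{2n}^r)]=(-1)^n[\hat{A}Ch]_{2n}$, to all degrees simultaneously by exploiting the cyclic refinement provided by Theorem \ref{thm:tauw}. The starting observation is that $\tau_w$, being an $\alg{sp}_{2n}$-basic cyclic cocycle, couples its distinct Hochschild components $\tau_{2k}^r$ through the differential $d+uB$, so an identification at a single degree should propagate to all degrees. Concretely, I would first extend $ev_1$ from the Hochschild to the cyclic complex by tensoring with $W$ and verify it is a morphism of complexes for $d+uB$ and a matching $u$-deformed Chevalley--Eilenberg differential on the target, so that $ev_1(\tau_w)$ becomes a cocycle in the relative Lie algebra complex of $(\alg{g},\alg{h})$ with coefficients in $W$.

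The heart of the argument is a compatibility relation $ev_1\circ\iota_\omega=s\,\iota_{\omega_L}\circ ev_1$ for a sign $s$, where $\omega_L\in\Lambda^2\alg{g}$ is the Poisson bivector (dual to the symplectic form $\omega$) acting by contraction on Lie algebra cochains. This should follow from direct inspection of the definition of $ev_1$ (a cyclic antisymmetrization, cf.\ Section \ref{sec:lacohomcc}) and of $\iota_\omega$ in \eref{equ:iotaomegadef}. Combined with the FFS result, it reduces the theorem to the Chern--Weil identity
\[
\iota_{\omega_L}^{n-k}[\hat{A}Ch]_{2n}=\pm\,(n-k)!\,[\hat{A}Ch]_{2k}\quad\text{in }H^{2k}(\alg{g},\alg{h}),
\]
which I would verify using the explicit polynomial form of $\hat{A}Ch$ in the universal curvatures of $\alg{sp}_{2n}$: $\iota_{\omega_L}$ is a derivation of degree $-2$, and $n-k$ successive contractions of the top component with $\omega_L$ reproduce the degree-$2k$ component up to the indicated combinatorial factor.

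The principal obstacle I anticipate is establishing the intertwining $ev_1\circ\iota_\omega=s\,\iota_{\omega_L}\circ ev_1$ with correct signs, since $ev_1$ involves a cyclic antisymmetrization while $\iota_\omega$ acts on ordered Hochschild arguments, so reconciling the combinatorial factors demands care. As a possible shortcut, one could argue via uniqueness: the relative cohomology $H^\bullet(\alg{g},\alg{h})$ is known to be generated by characteristic classes, so a Lie cocycle arising as the $ev_1$-image of an $\alg{sp}_{2n}$-basic cyclic cocycle on $\WA_{2n}$ with the correct top degree component is essentially forced to equal $\hat{A}Ch$ component-wise, bypassing direct verification of the Chern--Weil identity above.
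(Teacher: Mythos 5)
Your overall architecture --- extend $ev_1$ to the cyclic setting, intertwine $\iota_\omega$ with a contraction $\iota_{\omega_L}$ on Lie algebra cochains, and push the FFS top-degree identification down to all degrees --- founders on the step you yourself flag as the reduction target: the ``Chern--Weil identity'' $\iota_{\omega_L}^{n-k}[\hat{A}Ch]_{2n}=\pm(n-k)!\,[\hat{A}Ch]_{2k}$. This is not a formal consequence of the polynomial form of $\hat{A}Ch$; it is essentially the theorem itself in disguise. The homogeneous components $P_k$ of the invariant polynomial are a priori unrelated by any contraction, and $\omega_L=\sum_j p_j\wedge q_j$ is built from \emph{linear} elements of $\alg{g}=\WA_{2n}^r$, not from $\alg{h}$, so contracting $\chi(P_n)$ with it produces terms $P_n(C(p_j,q_j),\dots)$ together with cross terms $P_n(\dots,C(v_i,p_j),\dots,C(v_{i'},q_j),\dots)$ that do not visibly reassemble into a multiple of $\chi(P_{n-1})$. (Note in particular that $C(p_j,q_j)=-pr(\co{p_j}{q_j})$ is a multiple of $\mathbb{1}\in\alg{gl}_r$, which feeds only the $Ch$-factor of $P$ and never the $\hat{A}$-factor, so a naive term-by-term matching cannot work.) Moreover $\iota_{\omega_L}$ does not commute with the Chevalley--Eilenberg differential, since $\co{d}{\iota_x}=L_x$, so the identity is not even obviously well posed at the level of cohomology without further argument. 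The fallback uniqueness argument has the same defect: even granting that $H^{2k}(\alg{g},\alg{h})$ is spanned by characteristic classes (itself a nontrivial Gelfand--Fuks-type computation), matching the top-degree component of $ev_1(\tau_{2n}^r)$ places no constraint on the class of $ev_1(\tau_{2k}^r)$ for $k<n$, since the components in different degrees are independent.

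The paper closes exactly this gap by a different reduction. It restricts to the subalgebra $W_{n,r}$, where the restriction map on $H^{2k}$ is injective, then uses degree counting and $GL(n)$-invariance (the ``flag property'' lemma) to reduce to arguments lying in $W_{k,r}$, i.e.\ involving only $p_1,q_1,\dots,p_k,q_k$. On such arguments $\frac{1}{(n-k)!}(-\iota_\omega)^{n-k}\tau_{2n,n}$ collapses --- because $\pi_{2n}$ must be fed derivatives in the missing variables, which only the factors $\iota_{p_j}\iota_{q_j}$ with $j>k$ can supply --- to the \emph{top-degree} cocycle $\tau_{2k,k}$ of the smaller Weyl algebra $\WA_{2k}$, to which the FFS result applies (Lemma \ref{lem:tauisachse} and Proposition \ref{prop:tauisachonwn}). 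So the bootstrap from top degree happens across Weyl algebras of different rank, not within one via a contraction identity on $\hat{A}Ch$. To salvage your route you would have to prove your contraction identity directly, which amounts to the explicit integral computation carried out in Appendix \ref{app:directproof}.
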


The notation used here will be introduced in Section \ref{sec:lacohomcc}.

\begin{rem}
The cyclic cocycle and the above results have been obtained independently by Pflaum, Posthuma and Tang and can be found in their upcoming paper \cite{ppt}. Furthermore, they consider more and deeper applications than are considered here. To the interested reader, the author strongly recommends the lecture of their work.
\end{rem}

\subsection{Structure of the paper}
In the next section, we review the formula given by FFS for the representative $\tau_{2n}$ and show that it satisfies \eref{equ:iotatauis0}. In section \ref{sec:cycformula} we derive from this the representative in the cyclic case and prove the first main result, Theorem \ref{thm:tauw}.

In Section \ref{sec:charclasses}, we partially repeat the treatment of Lie algebra cohomology and characteristic classes given by FFS, and prove the second main result, Theorem \ref{thm:tau2kisach}.

In Section \ref{sec:applications} we consider several applications, mostly generalizations of the applications already discussed by FFS. In particular, Proposition \ref{prop:tauistdch} in section \ref{sec:tdch} generalizes a Theorem by Nest and Tsygan on the relation of the Todd class and the cyclic cohomology of differential operators on a complex manifold (Theorem 7.1.1 in \cite{NT}).

Some standard definitions and sign conventions have been compiled in Appendix \ref{app:stddef}, in order not to interrupt the main line of arguments.

\subsection{Acknowledgements}
The author wants to thank his advisor Prof. Giovanni Felder for his continuous support and scientific guidance.  Furthermore he is grateful to X. Tang for sharing a preliminary version of his work on the subject.

\section{The FFS formula}
\label{sec:FFSformula}
In this section, we recall some of the results obtained by Feigin, Felder and Shoikhet \cite{felder} in the Hochschild case.
\subsection{Some Definitions}
\label{sec:somedefs}
To simplify the formulas, we make the following definitions:
On a tensor product of functions $a_0\otimes \cdots \otimes a_k$ we define the operator ${}_i\partial_\alpha$ to be the partial derivative wrt. $y_\alpha$ applied to the $i$-th factor, i.e.,
\[
{}_i\partial_\alpha (a_0\otimes \cdots \otimes a_k) 
=
a_0\otimes \cdots \otimes \pderi{a_i}{y_\alpha} \otimes \cdots \otimes a_k
\]

Furthermore, we define
\[
\alpha^i_{rs} = ({}_r\partial_{2i-1})({}_s\partial_{2i}) - ({}_r\partial_{2i})({}_s\partial_{2i-1}) 
\]
and\footnote{Note the missing factors of 1/2 in comparison to the definition of \cite{felder}.}
\[
\alpha_{rs} = \sum_{i=1}^{n} \alpha^i_{rs}.
\]

Next, define the operator $\mu_k$ acting on a tensor product of $(k+1)$ functions as 
\begin{equation}
\label{equ:mudef}
\mu_{k}(a_0\otimes \cdots \otimes a_k) = a_0(0)a_1(0)\cdots a_k(0).
\end{equation}
I.e., it is just the evaluation at 
$y_1=y_2=\cdots =y_{2n}=0$
followed by multiplication.

\subsection{The Formula}
\label{sec:FFStheformula}
The Hochschild $2n$-cocycle is the composition of three maps: 
\[
\tau_{2n} = \mu_{2n} \circ S_{2n} \circ \pi_{2n}.
\]
The leftmost map has already been defined above. The rightmost map is given by  
\begin{equation}
\label{equ:pi2ndef}
\pi_{2n} = \det
\begin{pmatrix}
{}_1\partial_1 & {}_1\partial_2 &  \cdots & {}_1\partial_{2n} \\
\vdots & \vdots & \ddots & \vdots \\
{}_{2n}\partial_{1} & {}_{2n}\partial_{2} &  \cdots & {}_{2n}\partial_{2n}.
\end{pmatrix}
\end{equation}
Finally, the middle one is given by 
\begin{equation}
\label{equ:Sdef}
S_{2n} = \int_{\Delta_{2n}} du_1\cdots du_{2n} \prod_{0\leq i< j \leq 2n} e^{B_1(u_j-u_i)\alpha_{ji}} 
\end{equation}
where $B_1(u)=u-\frac{1}{2}$ is the first Bernoulli polynomial and the integral is taken over the $2n$-simplex $\{ 0=u_0< u_1 <\cdots< u_{2n}< 1\}$.

We will take the following theorem for granted.
\begin{thm}[FFS \cite{felder}]
The cochain $\tau_{2n}$ is a cocycle, i.e., $d\tau_{2n}=0$.
\end{thm}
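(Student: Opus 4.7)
My plan is a Stokes' theorem argument on a compactified configuration space, in the spirit of Kontsevich's and Shoikhet's proofs of formality, of which the FFS formula is a residue/restriction.

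First I would identify the integration domain $\Delta_{2n}=\{0<u_1<\cdots<u_{2n}<1\}$ with the ordered configuration space of $2n+1$ points on the circle $\R/\Z$, one of which is pinned at $0$. The first Bernoulli polynomial $B_1(u)=u-\tfrac12$ is the Green's function on $S^1$ for $d/du$, so the one-form $dB_1(u_j-u_i)$ extends to a smooth propagator on a suitable blow-up of the configuration space. Accordingly, $\tau_{2n}(a_0,\dots,a_{2n})$ becomes the integral of a closed $2n$-form built from $\pi_{2n}$ and the propagators $B_1(u_j-u_i)\alpha_{ji}$, with the determinant $\pi_{2n}$ supplying exactly the antisymmetrization in derivative indices needed to annihilate the contributions from interior collisions.

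Next, I would recognise each term of $(d\tau_{2n})(a_0,\dots,a_{2n+1})$ as a codimension-one boundary contribution of an analogous integral over the enlarged configuration space of $2n+2$ points. The decisive observation is that the Moyal product on $\WA_{2n}$ factors through the two-point propagator: in the limit $u_{i+1}\to u_i^+$ the factor $e^{B_1(u_{i+1}-u_i)\alpha_{i+1,i}}$ tends to $e^{\alpha_{i,i+1}/2}$ (using $B_1(0^+)=-\tfrac12$ and antisymmetry of $\alpha$), which is precisely the bidifferential realising $a_i\star a_{i+1}$. Thus the collision of two adjacent points contributes the Hochschild face $\tau_{2n}(\dots,a_i\star a_{i+1},\dots)$ with the correct sign, while the ``wrap-around'' boundary $u_{2n}\to 1^-$ (where $B_1(1^-)=+\tfrac12$) furnishes the cyclic term $\tau_{2n}(a_{2n+1}\star a_0,a_1,\dots,a_{2n})$.

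The main step is then to prove that all other boundary strata of the compactified configuration space contribute zero. By a Kontsevich/Shoikhet-type degree count, collisions of $k\ge 3$ points yield strata of too small dimension to carry the restricted top form, and the few surviving bulk strata are killed by the total antisymmetry of $\pi_{2n}$ in its derivative indices, which forces the tensor contraction of $\alpha$'s emerging in the limit to be symmetrised against an antisymmetric object. The hard part will be the precise bookkeeping of signs and the verification of these vanishing lemmas; once it is in place, Stokes' theorem on the compactified configuration space gives $d\tau_{2n}=0$ as a direct consequence of the closedness of the integrand.
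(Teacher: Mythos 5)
First, note that the paper does not actually prove this statement: it explicitly says ``We will take the following theorem for granted'' and cites \cite{felder}, so there is no in-paper proof to compare against. Your strategy --- Stokes on a compactified configuration space of points on the circle, with the Bernoulli function as the propagator left over after integrating out Shoikhet's interior points --- is indeed the conceptual origin of the formula (the paper's appendix on Tsygan formality says as much), and your central mechanism is correct: $B_1(0^+)=-\tfrac12$ together with $\alpha_{rs}=-\alpha_{sr}$ turns the collision limit of $e^{B_1(u_{i+1}-u_i)\alpha_{i+1,i}}$ into $e^{\alpha_{i,i+1}/2}$, which composed with multiplication is exactly the Moyal product, so adjacent-collision faces reproduce the Hochschild terms $a_i\star a_{i+1}$, and the face $u_{2n+1}\to 1^-$ gives the wrap-around term.

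The genuine gap is that the object to which Stokes' theorem is to be applied is never constructed, and the parts you defer as ``bookkeeping'' are in fact the entire content of the theorem. Concretely: once the interior points are integrated out, the domain is just the simplex $\{0=u_0<u_1<\cdots<u_{2n+1}<1\}$, whose only codimension-one faces are the adjacent collisions $u_{i+1}=u_i$ and the endpoints $u_1=0$, $u_{2n+1}=1$; collisions of three or more points are corners of codimension $\geq 2$ and never enter Stokes' theorem, so your degree-count for $k\geq 3$ collisions is addressing a non-issue. What you must instead supply is a $2n$-form $\eta$ on this $(2n+1)$-dimensional simplex whose restrictions to the faces are the integrands of the Hochschild face terms \emph{and} for which the bulk contribution $\int_{\Delta_{2n+1}}d\eta$ vanishes. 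This is nontrivial precisely because $\pi_{2n}$ has only $2n$ derivative slots (one per coordinate $y_1,\dots,y_{2n}$) while $d\tau_{2n}$ has $2n+2$ arguments, and because $db_1=du-\delta_0\,du$ is not closed; the cancellation of the bulk term requires the exact interplay between the determinant structure and this distributional identity (compare the computation of $d\eta_k$ in the paper's supersymmetric appendix, where the analogous bulk term is a total $\partial/\partial\xi_\mu$-derivative). Alternatively, if you work upstairs on Shoikhet's disk compactification before integrating out interior points, then the vanishing of the extra boundary strata is not a consequence of the antisymmetry of $\pi_{2n}$ but requires the Kontsevich--Shoikhet vanishing lemmas for the disk propagator --- i.e., essentially the full formality machinery. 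Either route can be completed, but as written the proposal asserts rather than proves the decisive identity.
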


To show that $\iota_a \tau_{2n}=0$ for all $a \in \alg{sp}_{2n}$, FFS use the following lemma:
\begin{lemma}[Lemma 2.2 of \cite{felder}]
\label{lem:22} 
Let $\sigma\in \Sigma_{2n}$ be a permutation. Then 
\begin{align*}
\tau_{2n}(a_0\otimes a_{\sigma^{-1}(1)}\otimes\cdots\otimes a_{\sigma^{-1}(2n)})) \\
&= 
\text{sgn}(\sigma)\tau_{2n}^\sigma(a_0\otimes\cdots\otimes a_{2n})  
\end{align*}
where 
\[
\tau_{2n}^\sigma = \mu_{2n}\circ \int_{\sigma(\Delta_{2n})} du_1\cdots du_{2n} \prod_{0\leq i< j \leq 2n} e^{b_1(u_j-u_i)\alpha_{ji}} \circ \pi_{2n}.
\]
Here $b_1(x)$ is the 1-periodic extension of the Bernoulli polynomial $B_1(x)$ from the unit interval $[0,1)$ to $\R$. The domain of integration is $\sigma(\Delta_{2n})=\{0=u_0<u_{\sigma^{-1}(1)}<\cdots <u_{\sigma^{-1}(2n)}<1\}$.
\end{lemma}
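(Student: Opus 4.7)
The plan is to perform a change of integration variables and track how each of the three maps $\mu_{2n}$, $S_{2n}$, $\pi_{2n}$ in the composition $\tau_{2n} = \mu_{2n} \circ S_{2n} \circ \pi_{2n}$ transforms when the inputs are permuted.

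Set $\pi := \sigma^{-1}$ (extended by $\pi(0) := 0$) and $b_i := a_{\pi(i)}$, so the left-hand side is $\tau_{2n}(b_0 \otimes \cdots \otimes b_{2n})$. Since ${}_i\partial_\alpha$ acts on the $i$-th tensor slot, which now holds $b_i = a_{\pi(i)}$, it may be reinterpreted as a derivative on the $\pi(i)$-th slot of the tensor of $a$'s; correspondingly, I would introduce new integration variables $v_j := u_{\sigma(j)}$ (with $v_0 := 0$), so that $v_{\pi(i)} = u_i$. The substitution is measure-preserving, and directly from the definition of $\sigma(\Delta_{2n})$ it transforms the domain $\Delta_{2n}$ into $\sigma(\Delta_{2n})$.

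Next I would examine each factor. The map $\mu_{2n}$ is unchanged because the scalar values $a_i(0)$ commute. In $\pi_{2n}$, the rows end up labeled by $\pi(1), \ldots, \pi(2n)$ instead of $1, \ldots, 2n$, and reordering them to the natural order extracts a sign of $\text{sgn}(\pi^{-1}) = \text{sgn}(\sigma)$ by alternation of the determinant. The integrand of $S_{2n}$ becomes
\[
\prod_{0 \leq i < j \leq 2n} e^{b_1(v_{\pi(j)} - v_{\pi(i)})\,\alpha_{\pi(j)\pi(i)}},
\]
where I have written $b_1$ instead of $B_1$ freely, since $v_{\pi(j)} - v_{\pi(i)} = u_j - u_i$ lies in $(0, 1)$. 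I would then re-index this product over unordered pairs: for each pair $\{k, \ell\}$ with $k < \ell$, the unique $(i, j)$ with $i < j$ and $\{\pi(i), \pi(j)\} = \{k, \ell\}$ yields the factor $e^{b_1(v_\ell - v_k)\alpha_{\ell k}}$ directly when $\pi(i) < \pi(j)$, and in the opposite case the two sign flips coming from $b_1(-x) = -b_1(x)$ and $\alpha_{k\ell} = -\alpha_{\ell k}$ cancel, producing the same factor.

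The main obstacle I anticipate is precisely this last re-indexing. It is here that the periodic Bernoulli function $b_1$ becomes essential: in the new domain $\sigma(\Delta_{2n})$ the difference $v_\ell - v_k$ for $k < \ell$ need not be positive, and the cancellation above depends on the oddness identity $b_1(-x) = -b_1(x)$, which holds for $b_1$ away from integers but fails for the polynomial $B_1(x) = x - \tfrac{1}{2}$.
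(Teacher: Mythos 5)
Your argument is correct. Note that the paper itself imports this lemma from FFS without reproving it, so there is no in-paper proof to compare against; your change of variables $v_j=u_{\sigma(j)}$, the row-permutation sign from the determinant $\pi_{2n}$, and the re-indexing of the product using $b_1(-x)=-b_1(x)$ together with $\alpha_{k\ell}=-\alpha_{\ell k}$ is exactly the standard argument, and you correctly identify the passage from $B_1$ to the periodic $b_1$ as the one point where care is needed.
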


Using this lemma, we next repeat the proof by FFS of the fact that $\iota_a \tau_{2n}=0$. Let $\sigma_j \in \Sigma_{2n}$ be the permutation 
\[
1\ 2\ 3\ ..\ 2n \mapsto 2\ 3\ ..\ j\ 1\ (j+1)\ ..\ 2n.
\]
Then one can write, using the previous lemma
\begin{align*}
& (\iota_a \tau_{2n})(a_0,..,a_{2n-1})=\sum_{j=0}^{2n-1} \tau_{2n}^{\sigma_j}(a_0,a,a_1,..,a_{2n-1}) \\
&=\mu_{2n}\circ \sum_{j=0}^{2n}\int_{\sigma_j(\Delta_{2n})} du_1\cdots du_{2n} \prod_{0\leq i< j \leq 2n} e^{b_1(u_j-u_i)\alpha_{ji}} \circ \pi_{2n} (a_0,a,a_1,..,a_{2n}) \\
&=\mu_{2n}\circ \int_0^1 du_1 \int_{\Delta_{2n-1}} du_2\cdots du_{2n}
\prod_{0\leq i< j \leq 2n} e^{b_1(u_j-u_i)\alpha_{ji}} \circ \pi_{2n} (a_0,a,a_1,..,a_{2n})
\end{align*}
Note that since $a\in \alg{sp}_{2n}$ is quadratic, it needs to receive exactly two derivatives, otherwise the resulting contribution in the above formula is 0. The map $\pi_{2n}$ contributes one derivative, so the other derivative needs to come from one of the $\alpha_{j1}$. Hence the coefficient will contain exactly one term of the form $b_1(u_1-u_i)$ containing $u_1$. But this term vanishes upon performing the $u_1$-integral by the elementary fact that $\int_0^1(x-1/2)dx=0$. Hence it is shown that $\iota_a \tau_{2n}=0$.

\section{The cyclic case}
\label{sec:cycformula}
\subsection{Another differential}
Define the operator $\iota_\omega$ on $C^\bullet(\WA_{2n},\WA_{2n}^*)$ to be
\begin{equation}
\label{equ:iotaomegadef}
\iota_\omega := \frac{1}{2} \sum_{i,j} \omega_{ij} \iota_{y_i} \iota_{y_j}=\sum_j \iota_{p_j} \iota_{q_j}.
\end{equation}
Also define $L_\omega := \co{d}{\iota_\omega}$.
\begin{lemma}
\label{lem:someLids}
The following holds:
\begin{align*}
\aco{d}{L_\omega} &=0  
&\co{L_\omega}{\iota_\omega}&=0 \\
\aco{L_\omega}{L_\omega} &=2L_\omega^2=0 
&\co{\iota_\omega}{B}&=0 \\
\aco{L_\omega}{B}&=0 
\end{align*}
In particular, $(B-L_\omega)$ is a differential of degree $-1$ on $C^\bullet(\WA_{2n},\WA_{2n}^*)$.
\end{lemma}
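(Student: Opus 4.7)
The plan is to establish the identities in a logical order, with several following from the others once two key computations are in hand.

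The identity $\{d,L_\omega\}=0$ is purely formal. Since $L_\omega := [d,\iota_\omega]$ and $d^2=0$, the graded Jacobi identity applied to the triple $(d,d,\iota_\omega)$ gives $2\{d,L_\omega\}=[\{d,d\},\iota_\omega]=0$. Note that $d$ and $L_\omega$ are both odd, so $\{d,L_\omega\}$ is indeed the appropriate graded bracket.

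The two essential computations are $[\iota_\omega,B]=0$ and $[L_\omega,\iota_\omega]=0$. For these I would expand $\iota_\omega=\sum_j\iota_{p_j}\iota_{q_j}$ and, via graded Leibniz, write $L_\omega=\sum_j\bigl(L^d_{p_j}\iota_{q_j}-\iota_{p_j}L^d_{q_j}\bigr)$ where $L^d_a:=\{d,\iota_a\}$. The standard Cartan-type relations of the Hochschild calculus---$\{\iota_a,\iota_b\}=0$, $[L^d_a,\iota_b]=\iota_{[a,b]}$, and $[\iota_a,B]=0$ on normalized cochains---combined with the Weyl commutation relations $[p_j,q_k]=\delta_{jk}$ then collapse each commutator to a sum of $\iota_1$-contributions. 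These vanish on the normalized subcomplex, in which the cocycle $\tau_{2n}$ naturally lives since $\pi_{2n}$ differentiates in every argument and so kills any insertion of $1$.

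The remaining two identities follow by graded Jacobi. From $L_\omega=[d,\iota_\omega]$,
\begin{align*}
\{L_\omega,L_\omega\}&=[\{L_\omega,d\},\iota_\omega]-[d,[L_\omega,\iota_\omega]]=0,\\
\{B,L_\omega\}&=[\{B,d\},\iota_\omega]-[d,[B,\iota_\omega]]=0,
\end{align*}
using the previously proved identities together with the standard $\{d,B\}=0$. The concluding statement of the lemma is then immediate: $(B-L_\omega)^2=B^2-\{B,L_\omega\}+L_\omega^2=0$.

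The main obstacle is the direct verification of $[\iota_\omega,B]=0$ and $[L_\omega,\iota_\omega]=0$; each requires careful sign bookkeeping with the explicit interior-product formulas from the appendix, and the cancellation of the residual $\iota_1$ terms crucially uses the normalization of the cochain complex. All remaining parts of the lemma are formal consequences of these two identities together with the standard relations $d^2=B^2=\{d,B\}=0$.
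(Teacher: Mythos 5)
Your proof is correct and follows essentially the same route as the paper: everything is reduced to the Cartan-type identities of Lemma \ref{lem:morestruct} together with $\iota_1\equiv 0$ on the normalized complex, and the remaining identities are obtained formally by graded Jacobi from $d^2=B^2=\aco{d}{B}=0$. One minor imprecision: $\co{\iota_\omega}{B}=0$ produces no residual $\iota_1$ terms at all --- it is immediate from $\aco{\iota_a}{B}=0$, since $\iota_\omega$ is a sum of products of two odd interior products; only $\co{L_\omega}{\iota_\omega}=0$ genuinely needs the $\iota_{\co{p_j}{q_k}}=\delta_{jk}\iota_1=0$ cancellation.
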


\begin{proof}
The first equality is an immediate consequence of the definition. The second equality immediately follows from the second equality of Lemma \ref{lem:morestruct} in the appendix and the fact that $\iota_1\equiv 0$ on the normalized chain complexes. The third equality is a direct consequence of the first two. The last two equalities follow from the last two equalities of Lemma \ref{lem:morestruct}. The fact that $(B-L_\omega)$ is a differential is clear from the third and fifth equalities.
\end{proof}

Technically the main result of this section is that $\tau_{2n}$ is closed w.r.t. this differential.
\begin{thm}
\label{thm:DBtauis0}
$(B-L_\omega)\tau_{2n}=0$
\end{thm}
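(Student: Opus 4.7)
The plan is to reduce the identity to an explicit comparison of $(2n-1)$-cochains and then match them using the permutation Lemma \ref{lem:22} together with a Stokes-type argument on the simplex, in the spirit of the FFS proof of $\iota_a\tau_{2n}=0$ reproduced just above.

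Since $d\tau_{2n}=0$ by the FFS theorem, one has $L_\omega\tau_{2n} = d\iota_\omega\tau_{2n}$, so the task reduces to showing $B\tau_{2n} = d\iota_\omega\tau_{2n}$. To unfold the left-hand side, I would write
\[
(B\tau_{2n})(a_0,\ldots,a_{2n-1}) = \sum_{i=0}^{2n-1}(\pm 1)\,\tau_{2n}(1,a_i,a_{i+1},\ldots,a_{i-1})
\]
with cyclic indices. A crucial simplification is that inserting $1$ in any slot $r\geq 1$ makes $\pi_{2n}$ vanish (the $r$-th row of derivatives is identically zero), so only insertions into slot $0$ survive, and the ``$1$'' there merely contributes a factor of $1$ under $\mu_{2n}$. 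Applying Lemma \ref{lem:22} to each cyclic rotation and summing over $i$ assembles the rotated simplices into a cube-like domain, yielding a single FFS-type integral expression for $B\tau_{2n}$ whose integrand involves the periodic Bernoulli polynomial $b_1$.

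For the right-hand side I would expand $\iota_\omega\tau_{2n}=\sum_j \iota_{p_j}\iota_{q_j}\tau_{2n}$ using the sum-over-insertion-positions formula for the interior product, and then apply $d$. Because $p_j$ and $q_j$ are linear coordinate functions, each absorbs at most a single derivative in $\pi_{2n}$ or in any $\alpha_{rs}$ factor, and the antisymmetric sum over $j$ in $\iota_\omega$ reconstitutes the symplectic form. Applying Lemma \ref{lem:22} once more brings $d\iota_\omega\tau_{2n}$ into integral form over a comparable cube-like domain.

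The final step is to identify the two integrals. I expect the key input to be the distributional identity $b_1'(u) = 1 - \sum_{n\in\Z}\delta(u-n)$: integrating against the FFS integrand turns the insertion of $1$ from $B$ into a bulk term plus delta-spikes at integer points, and these delta contributions should coincide, after a suitable relabeling of insertion positions, with the multiplications of adjacent arguments produced by $d$ acting on $\iota_\omega\tau_{2n}$. The hard part will be the combinatorial bookkeeping --- tracking signs through the permutation lemma, identifying the correct boundary strata of the rotated simplices with the insertion positions of $p_j$ and $q_j$, and confirming that the FFS vanishing $\iota_a\tau_{2n}=0$ for $a\in\alg{sp}_{2n}$ kills exactly the spurious terms that would otherwise obstruct the matching.
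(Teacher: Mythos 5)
Your reduction to $B\tau_{2n}=d\iota_\omega\tau_{2n}$ is correct, and your treatment of the left-hand side is essentially the paper's: apply Lemma \ref{lem:22} to each cyclic rotation, assemble the rotated simplices into the configuration space of ordered points on the circle, and use the translation invariance coming from the fact that the inserted $1$ receives no derivatives in order to integrate out one variable. (One small correction: $B$ as defined here only ever inserts $1$ into slot $0$, so no discussion of insertions into other slots is needed.)

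The right-hand side, however, is where the content of the theorem lives, and your proposal stops short of proving it. You name a strategy --- compute $\iota_\omega\tau_{2n}$ as an integral, apply $d$, and use $b_1'=1-\sum_m\delta(\cdot-m)$ to convert the Hochschild products into collisions of points --- and this strategy is viable in principle (the paper's supersymmetric appendix proves the analogous identity $B\tau_{2k+2}=d\tau_{2k}$ by exactly such a Stokes argument on the simplex). But you explicitly defer the decisive matching of the two integral expressions to unspecified ``combinatorial bookkeeping,'' which is precisely the step that constitutes the proof, and you do not identify the mechanism that makes the computation close. In the paper this mechanism is: (a) the operator identity $L_\omega=\sum_{i,j}\omega_{ij}L_{y_i}\iota_{y_j}$, valid on the normalized complex because $\co{L_{y_i}}{\iota_{y_j}}$ is contraction with a constant and hence vanishes; (b) the observation that, $y_k$ being linear, the extra $u$-integral produced by $\iota_{y_k}$ is trivial; and (c) a determinant row-replacement identity showing that $\sum_k\sum_p{}_p\partial_k\,\pi_{2n}(\cdots\otimes y_k)$ equals $-\pi_{2n}$ applied to $(\cdots\otimes 1)$ with the new row moved to position zero. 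This algebraic route never applies $d$ to an integral expression, so no Stokes argument and no delta functions arise; your expectation that $\iota_a\tau_{2n}=0$ for $a\in\alg{sp}_{2n}$ is needed to cancel spurious terms is likewise not borne out, as the paper's proof uses no such cancellation. As written, your proposal is a plausible plan for half of the theorem and a correct execution of the other half, but not a proof.
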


As a direct corollary, we can construct the desired cocycles representing cyclic cohomology classes, and prove the first main result, Theorem \ref{thm:tauw}.

\begin{proof}[Proof of Theorem \ref{thm:tauw}]

Since $\co{\iota_\omega}{\iota_a}=0$, the second property ($\iota_a \tau_w=0$) follows immediately from the analogous statement for $\tau_{2n}$.

To prove the first part, i.e., that $\tau_w$ is a cyclic cocycle, compute:
\begin{align*}
d\tau_w
&=
e^{-u\iota_\omega} \left(e^{u\iota_\omega}  d e^{-u\iota_\omega}\right) \tau_{2n}\otimes w \\
&=
e^{-u\iota_\omega} \left(d+u\co{\iota_\omega}{d}+\frac{u^2}{2}\co{\iota_\omega}{\co{\iota_\omega}{d}} + \dots\right)\tau_{2n} \otimes w \\
&=
-e^{-u\iota_\omega} uL_\omega\tau_{2n}\otimes w
=
-ue^{-u\iota_\omega}B\tau_{2n}\otimes w = -uB\tau_w.
\end{align*}
Hence $(d+uB)\tau_w=0$. For the third equality, we used that $d\tau_{2n}=0$ and the fact that $\co{L_\omega}{\iota_\omega}=0$ from Lemma \ref{lem:someLids}. 

It remains to show that $\tau_w$ is not trivial, i.e., not a coboundary. If it were a coboundary, it would evaluate to $0$ on the cycle $c=1\otimes p_1\wedge q_1\wedge \dots p_n\wedge q_n$. However, $\tau_w(c) = \tau_{2n}(c)\otimes w=(2n)!\neq 0$, where the middle equality was already observed by FFS.
\end{proof}

\subsection{The proof of Theorem \ref{thm:DBtauis0}}

Theorem \ref{thm:DBtauis0} will be proven by the following two lemmas, each of which computes one of the two terms occuring in its statement.

\begin{lemma}
The following holds:
\begin{multline*}
(B\tau_{2n}) (a_1\otimes .. \otimes a_{2n}) = \\
= \mu_{2n}\circ \int_{0=u_1<u_2<..<u_{2n}<1} du_2\cdots du_{2n} \prod_{1\leq i< j \leq 2n} e^{b_1(u_j-u_i)\alpha_{ji}} \circ \pi_{2n} (1\otimes a_1\otimes .. \otimes a_{2n})
\end{multline*}
\end{lemma}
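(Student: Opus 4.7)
The plan is to use the definition of the Connes differential $B$ (recited in the appendix) to expand $(B\tau_{2n})(a_1,\ldots,a_{2n})$ as a signed cyclic sum of terms of the form $\tau_{2n}(1,a_i,a_{i+1},\ldots,a_{i-1})$, and then, via Lemma~\ref{lem:22}, to identify that sum with a single integral of the form claimed. On the normalized complex, the $B$-formula reads
\[
(B\tau_{2n})(a_1,\ldots,a_{2n}) \;=\; \sum_{i=1}^{2n} (-1)^{(2n-1)(i-1)}\, \tau_{2n}\bigl(1,a_i,a_{i+1},\ldots,a_{i-1}\bigr),
\]
with the $a$-indices read modulo $2n$.

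First I would apply Lemma~\ref{lem:22} to each summand, with $\sigma$ the cyclic shift characterized by $\sigma^{-1}(k) = i+k-1 \bmod 2n$ for $k = 1, \ldots, 2n$. This rewrites the $i$-th summand as an integral over the rotated simplex $\sigma(\Delta_{2n})$, with the periodic Bernoulli function $b_1$ in place of $B_1$, and with a prefactor $\mathrm{sgn}(\sigma) = (-1)^{(i-1)(2n-1)}$. Since this is exactly the sign appearing in the $B$-formula, the two signs multiply to $+1$ and every summand contributes with the same sign. Second, since slot~$0$ now contains the constant function $1$, each factor $e^{b_1(u_j-u_0)\alpha_{j0}}$ in the integrand collapses to $1$ (the operator $\alpha_{j0}$ always differentiates slot~$0$), so the product reduces to $\prod_{1 \leq i < j \leq 2n} e^{b_1(u_j-u_i)\alpha_{ji}}$.

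The key geometric step is then to observe that the union of the $2n$ rotated simplices $\sigma(\Delta_{2n})$ is, up to a set of measure zero, the locus $\Omega \subset (0,1)^{2n}$ on which $u_1,\ldots,u_{2n}$ appear in this cyclic order on the circle $\R/\Z$. On $\Omega$ I would make the change of variables $v_j := (u_j - u_1) \bmod 1$, which has unit Jacobian; by $1$-periodicity of $b_1$ the integrand is invariant, $b_1(u_j - u_i) = b_1(v_j - v_i)$, and the new region factors as $\{u_1 \in (0,1)\} \times \{0 = v_1 < v_2 < \cdots < v_{2n} < 1\}$.

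Since the integrand no longer depends on $u_1$, the $\int_0^1 du_1$ integral contributes a factor of $1$, and after renaming $v_k \to u_k$ one recovers exactly the right-hand side of the statement. The main technical obstacle is the sign bookkeeping in the first step: one has to verify from the appendix's formula for $B$ and the sign of the cyclic permutation that all $2n$ contributions really do add constructively rather than cancelling; once this is settled, the remainder is essentially a change of variables together with the vanishing of derivatives of the constant~$1$.
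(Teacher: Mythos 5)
Your proposal is correct and follows essentially the same route as the paper: expand $B\tau_{2n}$ as the signed cyclic sum, absorb the signs via Lemma \ref{lem:22} so that the $2n$ rotated simplices assemble into the configuration space of cyclically ordered points on the circle, discard the $i=0$ factors because derivatives of the constant $1$ vanish, and then shift by $u_1$ and integrate it out using the $1$-periodicity of $b_1$. The only cosmetic difference is that you make the sign cancellation $(-1)^{(2n-1)(i-1)}\cdot\mathrm{sgn}(\sigma)=+1$ explicit, which the paper leaves implicit.
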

\begin{proof}
Let $C$ be the cyclic permutation of $2n$ elements. Then 
\[
(B\tau_{2n}) (a_1\otimes .. \otimes a_{2n}) = \sum_{j=0}^{2n-1} (-1)^j \tau_{2n}(1\otimes C^j(a_1\otimes .. \otimes a_{2n})) = \sum_{j=0}^{2n-1} \tau_{2n}^{C^j}(1\otimes a_1\otimes .. \otimes a_{2n})
\]
by Lemma \ref{lem:22}. Furthermore,
\begin{align*}
\sum_{j=0}^{2n-1} \tau_{2n}^{C^j} 
&=
\mu_{2n}\circ 
\left( \sum_{j=0}^{2n-1} \int_{C^j(\Delta_{2n})} \right)
du_1\cdots du_{2n} \prod_{0\leq i< j \leq 2n} e^{b_1(u_j-u_i)\alpha_{ji}} \circ \pi_{2n} \\
&=
\mu_{2n}\circ 
\int_{\Gamma}
du_1\cdots du_{2n} \prod_{0\leq i< j \leq 2n} e^{b_1(u_j-u_i)\alpha_{ji}} \circ \pi_{2n}
\end{align*}
where the integration domain $\Gamma$ is the configuration space of $2n$ ordered points on the circle. In our case, terms in the product for which $i=0$ will not contribute, since any derivative of 1 vanishes. Hence the integrand is invariant w.r.t. joint shifts of the $u_1$,..,$u_{2n}$. We can use this by changing the integration variables to
\[
u_j' =
\begin{cases}
u_1 & \quad \text{for $j=1$} \\
u_j-u_1 & \quad \text{otherwise}
\end{cases}
\]
and integrating out $u_1'$. The result is that 
\begin{multline*}
(B\tau_{2n}) (a_1\otimes .. \otimes a_{2n}) = \\
= \mu_{2n}\circ \int_{0<u_2'<..<u_{2n}'<1} du_2'\cdots du_{2n}' \prod_{1\leq i< j \leq 2n} e^{b_1(u_j'-u_i')\alpha_{ji}} \circ \pi_{2n} (1\otimes a_1\otimes .. \otimes a_{2n})
\end{multline*}
where we put $u_1':=0$ to simplify notation. This was to be proved.
\end{proof}

\begin{lemma}
The following holds:
\begin{multline*}
L_\omega\tau_{2n} (a_1\otimes .. \otimes a_{2n}) = \\
= \mu_{2n}\circ \int_{0=u_1<u_2<..<u_{2n}<1} du_2\cdots du_{2n} \prod_{1\leq i< j \leq 2n} e^{b_1(u_j-u_i)\alpha_{ji}} \circ \pi_{2n} (1\otimes a_1\otimes .. \otimes a_{2n})
\end{multline*}
\end{lemma}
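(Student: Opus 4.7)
My plan is to compute $L_\omega\tau_{2n}$ directly from the FFS formula. Since $\tau_{2n}$ is a $d$-cocycle, $L_\omega\tau_{2n}=d\iota_\omega\tau_{2n}$, so the task reduces to making $\iota_\omega\tau_{2n}$ explicit and then applying $d$. I would first expand $\iota_\omega=\sum_j \iota_{p_j}\iota_{q_j}$ and unravel the action of each $\iota_{y_\alpha}$ on $\tau_{2n}$ using the same insertion-with-signs formula (a consequence of Lemma \ref{lem:22}) that was used implicitly in Section \ref{sec:FFSformula} to show $\iota_a\tau_{2n}=0$ for $a\in\alg{sp}_{2n}$.

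This will write $\iota_\omega\tau_{2n}$ as a sum indexed by pairs $(k,l)$ of insertion slots and by $j=1,\ldots,n$, of FFS-type integrals in which $p_j$ and $q_j$ sit in slots $k$ and $l$. Because $p_j$ and $q_j$ are linear polynomials vanishing at $0$, the evaluation $\mu_{2n}$ will kill every contribution except those in which exactly one derivative (from $\pi_{2n}$ or from a single Taylor term of an exponential $e^{B_1(u_s-u_r)\alpha_{sr}}$) strikes each inserted variable, with the matching coordinate index. Summing over $j$ then contracts these two isolated derivatives through $\omega$, effectively inserting an extra $\alpha$-type factor between slots $k$ and $l$.

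I would then apply $d$. The Hochschild differential multiplies neighbouring arguments, so on the FFS integral it identifies adjacent configuration points $u_k\sim u_{k+1}$. After a translational-invariance argument exactly parallel to the one in the previous lemma — the integrand will no longer depend on $u_0$, since slot $0$ now carries the constant $1$ — the sum over insertion positions should collapse into a single integral over the reduced simplex $\{0=u_1<u_2<\cdots<u_{2n}<1\}$, matching the right-hand side.

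The hardest part will be sign and combinatorial bookkeeping: verifying that the signs coming from the insertion rule for $\iota_a$, from the antisymmetry of $\omega_{ij}$, and from the Leibniz rule for $d$ conspire correctly. The cleanest way to check this is to match the result term by term with the combinatorial identity used to rewrite $B\tau_{2n}$ in the previous lemma. The structural reason the two formulas coincide is that $d\iota_\omega$ and $B$ both ultimately amount to placing a constant $1$ in slot $0$ and summing over cyclic shifts of the $(2n)$-point configuration on the interval.
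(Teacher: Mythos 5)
Your reduction $L_\omega\tau_{2n}=d\,\iota_\omega\tau_{2n}$ is correct, and the computation of $\iota_\omega\tau_{2n}$ along the lines you describe is feasible (it is essentially the computation of $\tau_{2n-2}$, cf.\ the calculation in Lemma \ref{lem:tauisachse}). But the plan founders at the step ``I would then apply $d$.'' The Hochschild differential of an FFS-type integral is precisely the hard object in this theory: even the statement $d\tau_{2n}=0$ is a deep theorem that the paper takes as a black box, and for the lower components one has $d\tau_{2k}=-B\tau_{2k+2}\neq 0$, so what you propose to compute is essentially equivalent to the assertion you are trying to prove. Your heuristic that $d$ ``identifies adjacent configuration points $u_k\sim u_{k+1}$'' is the Stokes-theorem picture (compare the supersymmetric appendix, where $B\tau_{2k+2}=d\tau_{2k}$ is proved by exhibiting $\tau_{2k}$ as an integral of a form $\eta_{2k}$ over the simplex and applying Stokes); making it precise requires controlling all boundary strata, the jump of $b_1$ across coinciding points, and the resummation of the $\alpha$-exponentials into Moyal products. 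None of this is supplied, and without it the claimed collapse of the $d$-terms --- which a priori contain products $a_ja_{j+1}$ of the original arguments, absent from the target formula --- into a single integral with a $1$ in slot $0$ is unjustified. The closing ``structural reason'' is likewise only true of $B$ by definition; for $d\iota_\omega$ it is the thing to be proved.

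The paper avoids computing any Hochschild differential. It expands $L_\omega=\sum_{i,j}\omega_{ij}L_{y_i}\iota_{y_j}$ (using that $\co{L_{y_i}}{\iota_{y_j}}=\iota_{\co{y_j}{y_i}}$ vanishes on the normalized complex because $\co{y_j}{y_i}$ is a constant), computes $\iota_{y_k}\tau_{2n}$ by the same permutation-and-integration argument used in Section \ref{sec:FFSformula} for $\iota_a\tau_{2n}$, and then exploits that $L_{y_l}$ for linear $y_l$ acts as the total derivative $\sum_{p}\sum_m\omega^{lm}\,{}_p\partial_m$, so that the contraction with $\omega_{kl}$ reduces to an elementary row manipulation in the determinant $\pi_{2n}$. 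If you want to keep your route you must first develop the Stokes/configuration-space formalism; otherwise switch to the $L_{y}\iota_{y}$ decomposition.
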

\begin{proof}
Note that 
\[
L_\omega=\co{d}{\iota_\omega}
=\frac{1}{2} \sum_{i,j} \omega_{ij} (L_{y_i}\iota_{y_j} -\iota_{y_i} L_{y_j})
=\sum_{i,j} \omega_{ij} L_{y_i}\iota_{y_j}.
\]
As above (see section \ref{sec:FFStheformula}) we obtain that
\begin{multline*}
(\iota_{y_k}\tau_{2n})(a_0\otimes .. \otimes a_{2n-1}) =
- \mu_{2n}\circ \int_0^1 du_{2n}  \int_{0=u_0<u_1<..<u_{2n-1}<1} du_2\cdots du_{2n-1}
\\
\prod_{0\leq i< j \leq 2n} e^{b_1(u_j-u_i)\alpha_{ji}} \circ \pi_{2n} (a_0\otimes .. \otimes a_{2n-1}\otimes y_k).
\end{multline*}
Since $y_k$ is linear and there is already one derivative acting on it through $\pi_{2n}$, all terms contributing nontrivially are independent of $u_{2n}$. Hence the $u_{2n}$-integral can be performed yielding a factor 1. 

Next note that for a $(2n-1)$-cochain $\phi$
\[
L_{y_l}\phi = \phi \circ \left( \sum_{p=0}^{2n-1} \sum_m \omega^{lm} \: {}_p\partial_m \right).
\]
Here $\omega^{lm}$ is the inverse of $\omega_{ij}$, i.e., $\sum_j\omega_{ij}\omega^{jk}=\delta_i^k$. When applying this formula to the case at hand, we obtain that 
\begin{multline*}
L_\omega\tau_{2n} (a_0\otimes .. \otimes a_{2n-1})= \\
=
-\mu_{2n} \int_{0=u_0<u_1<..<u_{2n-1}<1} du_2\cdots du_{2n-1} \prod_{0\leq i< j \leq 2n} e^{b_1(u_j-u_i)\alpha_{ji}} \\
\sum_{k,l} \omega_{kl}
\left( \sum_{p=0}^{2n-1} \sum_m \omega^{lm} \: {}_p\partial_m \right) 
\pi_{2n}
(a_0\otimes .. \otimes a_{2n-1}\otimes y_k).
\end{multline*}
The term on the right is
\begin{align*}
& \sum_{k,l} \omega_{kl}
\left( \sum_{p=0}^{2n-1} \sum_m \omega^{lm} \: {}_p\partial_m \right) 
\pi_{2n}
(a_0\otimes .. \otimes a_{2n-1}\otimes y_k) \\
&= 
\sum_{k} 
\sum_{p=0}^{2n-1} {}_p\partial_k
\pi_{2n}
(a_0\otimes .. \otimes a_{2n-1}\otimes y_k)
\\ &= 
\sum_{p=0}^{2n-1}
\sum_{k}
{}_p\partial_k
\det
\begin{pmatrix}
{}_1\partial_1 & {}_1\partial_2 &  \cdots & {}_1\partial_{2n} \\
\vdots & \vdots & \ddots & \vdots \\
{}_{2n-1}\partial_{1} & {}_{2n-1}\partial_{2} &  \cdots & {}_{2n-1}\partial_{2n} \\
{}_{2n}\partial_1 & {}_{2n}\partial_2 &  \cdots & {}_{2n}\partial_{2n}
\end{pmatrix}
(a_0\otimes .. \otimes a_{2n-1}\otimes y_k)
\\ &= 
\sum_{p=0}^{2n-1}
\sum_{k}
{}_p\partial_k
\det
\begin{pmatrix}
{}_1\partial_1 & {}_1\partial_2 &  \cdots & {}_1\partial_{2n} \\
\vdots & \vdots & \ddots & \vdots \\
{}_{2n-1}\partial_{1} & {}_{2n-1}\partial_{2} &  \cdots & {}_{2n-1}\partial_{2n} \\
\delta^k_1 & \delta^k_2 &  \cdots & \delta^k_{2n}
\end{pmatrix}
(a_0\otimes .. \otimes a_{2n-1}\otimes 1)
\\ &= 
\sum_{p=0}^{2n-1}
\det
\begin{pmatrix}
{}_1\partial_1 & {}_1\partial_2 &  \cdots & {}_1\partial_{2n} \\
\vdots & \vdots & \ddots & \vdots \\
{}_{2n-1}\partial_{1} & {}_{2n-1}\partial_{2} &  \cdots & {}_{2n-1}\partial_{2n} \\
{}_p\partial_1 & {}_p\partial_2 &  \cdots & {}_p\partial_{2n}
\end{pmatrix}
(a_0\otimes .. \otimes a_{2n-1}\otimes 1)
\\ &=
-\det
\begin{pmatrix}
{}_0\partial_1 & {}_0\partial_2 &  \cdots & {}_0\partial_{2n} \\
{}_1\partial_1 & {}_1\partial_2 &  \cdots & {}_1\partial_{2n} \\
\vdots & \vdots & \ddots & \vdots \\
{}_{2n-1}\partial_{1} & {}_{2n-1}\partial_{2} &  \cdots & {}_{2n-1}\partial_{2n}.
\end{pmatrix}
(a_0\otimes .. \otimes a_{2n-1}\otimes 1)
\end{align*}
For the last equality, we used the vanishing of the determinant if $p\neq 0$, since it then contains two equal rows.
It is now easily seen that the lemma follows by renumbering the $a_j$ and the $u_j$. 
\end{proof}

The above two lemmas clearly prove Theorem \ref{thm:DBtauis0} and hence Theorem \ref{thm:tauw}.

\subsection{A slight Generalization}
For later use, we need to know the cyclic cohomology of the algebra $\WA_{2n}^r=\WA_{2n}\otimes \gf^{r\times r}$ of $r$ by $r$ matrices with entries in $\WA_{2n}$. Since this algebra is Morita equivalent to $\WA_{2n}$ the answer is simple, namely the two cohomologies agree. The explicit representatives are given by 
\[
\tau_w^r((a_0\otimes M_0)\otimes \dots \otimes (a_k\otimes M_k)) 
= 
\tau_w(a_0\otimes\dots\otimes a_k) \tr(M_0\cdots M_k).
\]

Note that $\WA_{2n}^r$ contains the Lie subalgebra $\alg{sp}_{2n}\oplus \alg{gl}_r$, where $\alg{gl}_r$ is embedded as the constant matrices, and $\alg{sp}_{2n}$ as the matrices $\alg{sp}_{2n}\otimes \mathbb{1}_{r\times r}$. Note also that $\iota_\alpha \tau_w^r=0$ for all $\alpha \in \alg{sp}_{2n}\oplus \alg{gl}_r \subset \WA_{2n}^r$, since $\iota_1 \tau_w=0$ in the normalized cyclic complex.

\section{Characteristic classes}
\label{sec:charclasses}
Analogously to the treatment given by FFS in section 5 of \cite{felder}, we can consider the relation of $\tau_w^r$ to characteristic Lie algebra cohomology classes. In particular FFS showed that $\tau_{2n}^r$ yields the $2n$-component of the $\hat{A}Ch$-class. Not surprisingly, we can show that $\tau_{2k}^r:=\frac{1}{(n-k)!}(-\iota_\omega)^{n-k}\tau_{2n}^r$ yields the $2k$-component of $\hat{A}Ch$.

To make these statements precise, we first need to repeat some preliminaries from \cite{felder}. This will be done in the next subsection. The precise version of the above claim, i.e. Theorem \ref{thm:tau2kisach}, will be recalled in subsection \ref{sec:cctheclaim}. It is proved in subsection \ref{sec:cctheproof}.

\subsection{Lie algebra cohomology and characteristic classes}
\label{sec:lacohomcc}
For $\alg{g}$ a Lie algebra, $\alg{h}\subset \alg{g}$ a Lie subalgebra and $M$ a $\alg{g}$-module, we denote by $(C^\bullet(\alg{g},\alg{h};M), d)$ the relative Chevalley-Eilenberg complex. Its definition can be found in the appendix. The cohomology of this complex is called the relative Lie algebra cohomology and denoted by $H^\bullet(\alg{g},\alg{h};M)$. If $M$ is omitted, it is understood that $M=\gf$ is the trivial module.

Let $pr:\alg{g}\rightarrow \alg{h}$ be an $\alg{h}$-equivariant projection, i.e., a projection such that 
\[
pr(\co{h}{g}) = \co{h}{pr(g)}
\]
for all $h\in \alg{h}, g\in \alg{g}$. The ``curvature'' $C\in \Hom(\wedge^2 \alg{g}, \alg{h})$ measures the failure of $pr$ to be a Lie algebra homomorphism:
\begin{equation}
\label{equ:curvdef}
C(u\wedge v) = \co{pr(u)}{pr(v)}-pr(\co{u}{v})
\end{equation}

Using $C$, one can construct the Chern-Weil homomorphism
\[
\chi : (S^\bullet\alg{h})^{*\alg{h}} \rightarrow C^{2\bullet}(\alg{g},\alg{h})
\] 
given for some $P\in (S^k\alg{h})^{*\alg{h}}$ as 
\[
\chi(P)(v_1,..,v_{2k}) =  \frac{1}{k!2^k} \sum_{\sigma\in S_{2k}}sgn(\sigma)P(C(v_{\sigma(1)},v_{\sigma(2)}),..,C(v_{\sigma(2k-1)},v_{\sigma(2k)})).
\]
where we identified $P$ with its polarization.

In our case, we take $\alg{g}=\WA_{2n}^r$ and $\alg{h}=\alg{sp}_{2n}\oplus \alg{gl}_r$. The $\alg{h}$-invariant polynomials $P$ we are interested in are the homogeneous components in the power series expansion of 
\[
P(x,M) = \hat{A}(x) Ch(M).
\]
Here $(x,M)\in \alg{sp}_{2n}\oplus \alg{gl}_r$,
\[
\hat{A}(x)= \sideset{}{^{\frac{1}{2}}}\det\left(\frac{x/2}{\sinh(x/2)}\right)
\]
is the $\hat{A}$-genus and 
\[
Ch(M) = \tr(e^M)
\]
is the Chern character. In the definition of $\hat{A}(x)$, the determinant is defined as the composition $\alg{sp}_{2n} \hookrightarrow \alg{gl}_{2n} \stackrel{\det}{\to} \C$.
 
The image of $P$ under $\chi$ defines a characteristic relative Lie algebra cohomology class which we will denote
\[
\hat{A}Ch = [\chi(P)] \in H^{\bullet}(\alg{g},\alg{h}).
\]

Another way to obtain a Lie algebra cohomology class of the Lie algebra $\alg{g}=\WA_{2n}^r$ is by evaluating at $1$ and antisymmetrizing a \emph{Hochschild} cohomology class of the \emph{algebra} $\WA_{2n}^r$. Concretely, there is the following chain map 
\begin{align*}
ev_1 : C^\bullet(\WA_{2n}^r,(\WA_{2n}^r)^*) &\rightarrow H^{\bullet}(\alg{g}) \\
ev_1(\phi)(v_1\wedge\dots\wedge v_k) 
&=
\sum_{\sigma\in S_k} sgn(\sigma) \phi(1\otimes v_{\sigma(1)}\otimes\dots\otimes v_{\sigma(k)}).
\end{align*}

We will use the following convention for embedding wedge products into tensor product space:
\[
v_1\wedge .. \wedge v_{k} := \sum_{\sigma\in S_k} sgn(\sigma) v_{\sigma(1)}\otimes\dots\otimes v_{\sigma(k)}.
\]
Hence, for example $ev_1(\phi)(v_1\wedge\dots\wedge v_k)=\phi(1\otimes v_1\wedge\dots\wedge v_k)$.

Note that, since we are working with the normalized Hochschild complex
\[
ev_1(B\phi) = 0.
\]
The images of the components $\tau_{2k}^r$ of $\tau_w$ will each define a Lie algebra cohomology class since
\[
d\circ ev_1(\tau_{2k}) = ev_1(d\tau_{2k}) = -ev_1(B\tau_{2k+2}) = 0.
\]

\subsection{The claim}
\label{sec:cctheclaim}

Recall that Theorem \ref{thm:tau2kisach}, stated in the introduction, claims that:
\[
[ev_1(\tau_{2k}^r)] = (-1)^k [\hat{A}Ch]_{2k} \in H^{\bullet}(\alg{g},\alg{h})
\]
for all $k=0,..,n$.

For $k=n$ this statement was proved by FFS in \cite{felder}. One can give a lenghthy, but more or less direct proof of the above statement by computing the integrals defining $\tau_{2k}^r$ in a special case. This proof differs slightly from that given by FFS and will be presented in Appendix \ref{app:directproof}. 

Here, however, we will take a much quicker route and reduce the above Theorem to the special case $k=n$, which was already proved by FFS and we take for granted. 

\subsection{The proof of Theorem \ref{thm:tau2kisach}}
\label{sec:cctheproof}
The first step in the reduction is the same as that used by FFS. We copy it here: Let $W_{n,r}\subset \WA_{2n}^r$ be the Lie subalgebra consisting of polynomials of the form $\sum_{j=1}^n f_j(q)p_j\otimes \mathbb{1}_{r\times r} + \sum_{j\geq 1} g_j(q)\otimes M_j$, for $f_j(q),g_j(q)$ polynomials in the $q_1,..,q_n$ and $M_j\in\alg{gl}_r$ matrices. Set further $\alg{h}_1=\alg{h}\cap W_{n,r}$. The following statement was proved by FFS, sections 5.5 and 5.6. 

\begin{lemma}
The map on cohomology
\[
H^{2k}(\WA_{2n}^r, \alg{h}) \rightarrow H^{2k}(W_{n,r}, \alg{h}_1)
\]
induced by restriction of cochains is an injection for $k\leq n$.
\end{lemma}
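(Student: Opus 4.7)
The plan is to exhibit a grading on the vector space complement of $W_{n,r}$ in $\WA_{2n}^r$, use it to filter the relative Chevalley–Eilenberg complex, and argue that the resulting spectral sequence degenerates in the relevant range so that the restriction is injective through degree $2n$.

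Concretely, I would introduce the $p$-degree on $\WA_{2n}^r$, assigning to a monomial $q^I p^J\otimes M$ the degree $|J|$. The subalgebra $W_{n,r}$ is then spanned by elements of $p$-degree $0$ together with the ``scalar'' part of $p$-degree $1$ (those of the form $f_j(q)p_j\otimes \mathbb{1}_{r\times r}$). A natural $\alg{h}$-equivariant complement $V$ consists of all elements of $p$-degree $\geq 2$ together with the traceless part in $p$-degree $1$. The vector space $V$ thus inherits a strictly positive grading, and this induces a decreasing filtration $F^\bullet C^\bullet(\WA_{2n}^r,\alg{h})$ on the Chevalley–Eilenberg complex, whose associated graded pieces in bidegree $(p,q)$ consist of $\alg{h}$-invariant maps on $\Lambda^p V \otimes \Lambda^q (W_{n,r}/\alg{h}_1)$.

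Next I would examine the $E_1$ page. The restriction map is, up to identifications, the edge homomorphism onto the column $p=0$, which is exactly $C^\bullet(W_{n,r},\alg{h}_1)$. To obtain injectivity in total degree $2k$ with $k\leq n$, it is enough to show that the columns $p\geq 1$ do not contribute a nontrivial obstruction: more precisely, that every cocycle supported strictly in $F^{\geq 1}$ is already a coboundary. This should follow from a representation-theoretic vanishing statement — $\alg{sp}_{2n}\oplus\alg{gl}_r$ invariants in $\Lambda^{\geq 1} V^* \otimes \Lambda^{\bullet} (W_{n,r}/\alg{h}_1)^*$ of total degree at most $2n$ are trivial once one takes into account the strictly positive $p$-grading on $V$ — combined with the construction, for each such invariant cocycle, of a primitive by iteratively trading factors in $V$ for factors in $W_{n,r}$ using the Chevalley–Eilenberg differential (which couples the $p$- and $q$-directions via the symplectic pairing).

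The main obstacle is this representation-theoretic vanishing on the $E_1$ page, together with verifying that the constructed primitives are themselves $\alg{h}$-basic. The key quantitative input is the bound $k\leq n$: the symplectic rank is exactly $n$, and the space of $\alg{sp}_{2n}$-invariants in the relevant tensor products cannot be populated by forms of $V^*$-degree $\geq 1$ until one has accumulated enough factors to build a top-degree symplectic invariant, which happens precisely at degree $2n$. For $k\leq n$ the only surviving contributions come from the $p=0$ column, proving injectivity; the remaining work is bookkeeping of signs and $\alg{h}$-equivariance across the spectral sequence, which I would expect to proceed as in Sections 5.5–5.6 of \cite{felder}.
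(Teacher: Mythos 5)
Your strategy has two genuine gaps, and the second one is fatal as written. First, the filtration you propose is not well defined: the $p$-degree is not preserved by $\alg{h}=\alg{sp}_{2n}\oplus \alg{gl}_r$, since $\ad_{q_iq_j}$ lowers and $\ad_{p_ip_j}$ raises the $p$-degree, so your complement $V$ is not an $\alg{h}$-submodule and the associated graded pieces are not spaces of $\alg{h}$-invariant maps on $\Lambda^p V\otimes\Lambda^q(W_{n,r}/\alg{h}_1)$. (One can instead use the Hochschild--Serre filtration by the number of arguments outside the subalgebra $W_{n,r}$; for that filtration, injectivity of the restriction in degree $2k$ is indeed equivalent to $E_\infty^{p,q}=0$ for $p\ge 1$, $p+q=2k$, which is the correct formal skeleton of your plan.) Second, and more seriously, the ``representation-theoretic vanishing'' that carries all the weight is asserted rather than proved, and it is false at the level at which you state it: there are $\alg{h}$-invariant relative cochains of low total degree supported in filtration $\ge 1$. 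For instance, the $2$-cochain $\phi(f,g)=\mu_0\bigl(\co{f_3}{g_3}\bigr)$, where $f_3$ denotes the cubic part and $\mu_0$ the constant term, is $\alg{h}$-basic, vanishes on $\Lambda^2 W_{n,r}$, and pairs $p_1^3\notin W_{n,r}$ nontrivially with $q_1^3\in W_{n,r}$. So the columns $p\ge 1$ do not vanish on the $E_1$ page for invariant-theoretic reasons; one would have to analyze the higher differentials, and your heuristic about the symplectic rank neither supplies that analysis nor actually explains why the relevant bound is $2k\le 2n$.

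Note also that the paper does not reprove this lemma; it quotes it from Sections 5.5--5.6 of \cite{felder}, where the argument is of a different nature: both sides are computed explicitly. There, $H^{\le 2n}(\WA_{2n}^r,\alg{h})$ is shown to be spanned by the Chern--Weil classes $\chi(P)$ of $\alg{h}$-invariant polynomials, while $H^{\bullet}(W_{n,r},\alg{h}_1)$ is identified by Gelfand--Fuks methods with the cohomology of a $2n$-skeleton of $BU(n)\times BU(r)$; since restriction of cochains sends $\chi(P)$ to the Chern--Weil class of $P|_{\alg{h}_1}$ and these remain linearly independent through degree $2n$, injectivity follows. This is also where the hypothesis $k\le n$ really enters (the target truncates above degree $2n$), not through a vanishing of invariants at the cochain level. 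To salvage your spectral-sequence route you would essentially have to re-import these two computations to control $E_\infty$, at which point the spectral sequence is no longer doing any work.
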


Hence in order to prove Theorem \ref{thm:tau2kisach}, it will be sufficient to show that the cohomology classes $[ev_1(\tau_{2k}^r)]$ and $[\hat{A}Ch]_{2k}$ are mapped to the same class in $H^{2k}(W_{n,r}, \alg{h}_1)$. Actually, we will show a stronger statement, namely:

\begin{prop}
\label{prop:tauisachonwn}
The restrictions of the cocycles $ev_1(\tau_{2k}^r)$ and $\chi(P_{k})$ to $W_{n,r}$ agree. Here $P_{k}\in (S^k\alg{h})^{*\alg{h}}$ is the degree $k$ component of the invariant polynomial defining the class $\hat{A}Ch$.
\end{prop}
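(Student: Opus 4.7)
The plan is to verify this by unpacking both sides on $W_{n,r}$ and reducing to the known case $k=n$. Starting from $\tau_{2k}^r=\frac{1}{(n-k)!}(-\iota_\omega)^{n-k}\tau_{2n}^r$, I would write
\[
ev_1(\tau_{2k}^r)(v_1\wedge\dots\wedge v_{2k}) = \frac{1}{(n-k)!}\bigl((-\iota_\omega)^{n-k}\tau_{2n}^r\bigr)\bigl(1\otimes v_1\wedge\dots\wedge v_{2k}\bigr)
\]
and expand $\iota_\omega=\sum_j\iota_{p_j}\iota_{q_j}$. Each factor $\iota_{y_\alpha}$ inserts $y_\alpha$ into a cochain slot with a signed sum over positions, so the result becomes a sum of terms in which $(n-k)$ pairs $(p_j,q_j)$ are interleaved with the arguments $v_1,\dots,v_{2k}$, and this augmented string of $2n+1$ arguments is fed into the FFS integral formula for $\tau_{2n}^r$.

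Next I would use the cyclic-permutation identity of Lemma \ref{lem:22}, together with $\iota_a\tau_{2n}^r=0$ for $a\in\alg{sp}_{2n}\oplus\alg{gl}_r$, to normalize the positions of the inserted pairs and antisymmetrize over the $v_i$. Restricting to $v_i\in W_{n,r}$, each $v_i$ is at most linear in the $p$'s, which severely constrains which derivative patterns in $\pi_{2n}$ and in $\prod e^{b_1(u_j-u_i)\alpha_{ji}}$ contribute nontrivially; the integrals over the free insertion positions should collapse to a small collection of $b_1$-integrals producing numerical factors. In parallel, I would expand $\chi(P_k)(v_1,\dots,v_{2k})$ via the Chern--Weil formula with the explicit curvature $C(u,v)=[pr(u),pr(v)]-pr([u,v])$ on $W_{n,r}$, noting that its image in $\alg{h}_1$ is built precisely from the commutators and bracket-differences that arise from the inserted $(p_j,q_j)$ pairs.

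The principal obstacle will be matching these two expansions as cochains, not merely as cohomology classes, since the Proposition asserts cocycle-level agreement. Rather than carrying out a brute-force term-by-term comparison, I would appeal to rigidity: both restrictions are multilinear and antisymmetric in $v_1,\dots,v_{2k}$, depend only on the $\alg{h}_1$-projections of the $v_i$ and their pairwise curvatures, and are $\alg{h}_1$-invariant. This should reduce the identification to testing on a minimal family of canonical elements of $W_{n,r}$---for instance monomials that detect each summand of $P_k=[\hat{A}(x)Ch(M)]_k$ individually---where the overall coefficient, including the sign implicit in passing from $P_k$ to its Chern--Weil normalization, can be pinned down by specializing to the FFS case $n-k=0$ without redoing any of the transcendental integrals.
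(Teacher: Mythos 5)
Your overall strategy---reduce to the FFS case $k=n$ rather than recomputing the integrals---is the right one and matches the paper's intent, but the step you lean on to finish is not available to you. You assert that $ev_1(\tau_{2k}^r)\big|_{W_{n,r}}$ ``depends only on the $\alg{h}_1$-projections of the $v_i$ and their pairwise curvatures,'' and then invoke rigidity to reduce to a finite check. That structural claim is essentially the content of the Proposition: a priori the left-hand side is a multilinear functional built from integrals of products of $b_1$'s against the determinant $\pi_{2n}$, and nothing yet forces it to factor through the Chern--Weil construction. Without an independent proof of that claim, the ``testing on a minimal family of canonical elements'' step has no foundation, and specializing to $n-k=0$ pins down nothing, since you cannot yet assert that both sides lie in a common finite-dimensional space of invariants.

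The paper closes this gap with two concrete ingredients you are missing. First, an explicit computation (Lemma \ref{lem:tauisachse}) shows that for $v_1,\dots,v_{2k}\in W_{k,r}$ (arguments involving only $p_1,q_1,\dots,p_k,q_k$) the contraction $(-\iota_\omega)^{n-k}$ must supply exactly the derivatives $\partial_{p_{k+1}},\dots,\partial_{q_n}$ demanded by $\pi_{2n}$, so the whole expression collapses \emph{identically} to $\tau_{2k,k}$, the FFS cocycle of the smaller Weyl algebra $\WA_{2k}$; the FFS theorem then applies verbatim with $n$ replaced by $k$. Second, a degree-counting lemma restricts the $v_j$ to three monomial types ($p_\alpha$, $p_\alpha q_\beta q_\gamma$, $q_\alpha\otimes M$, with exactly $k$ of the first type), and a combinatorial ``flag property'' lemma shows that any wedge of such monomials is a linear combination of $GL(n)$-transforms of wedges supported in the first $k$ coordinates. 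Since both sides of the identity are $GL(n)$-invariant and multilinear, this reduces the general case on $W_{n,r}$ to the $W_{k,r}$ case already handled. Your proposal would need to supply both of these (or a genuine substitute for the rigidity claim) to be complete.
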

Again, the special case $k=n$ has already been shown by FFS, and we take it for granted. We will begin the proof of the general case by establishing another special case:

\begin{lemma}
\label{lem:tauisachse}
The restrictions of the cocycles $ev_1(\tau_{2k}^r)$ and $\chi(P_{k})$ to $W_{k,r}\subset W_{n,r}$ agree. Here, $W_{k,r}$ consists of those polynomials in $W_{n,r}$ involving only $p_1,q_1,..,p_k,q_k$.
\end{lemma}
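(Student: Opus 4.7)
The strategy is to reduce Lemma \ref{lem:tauisachse} to the case $k=n$, which FFS have already established. Let $\tilde{\tau}_{2k}^r$ denote the FFS cyclic cocycle constructed directly on the smaller Weyl algebra $\WA_{2k}^r$ (via the same formula $\mu_{2k}\circ S_{2k}\circ \pi_{2k}$, now in dimension $2k$), and let $\tilde{P}_k$ denote the analogous $k$-th component of $\hat{A}Ch$ on $\alg{sp}_{2k}\oplus\alg{gl}_r$. Since $W_{k,r}\subset \WA_{2k}^r$, the FFS result applied to $\WA_{2k}^r$ gives immediately $ev_1(\tilde{\tau}_{2k}^r)|_{W_{k,r}} = \chi(\tilde{P}_k)|_{W_{k,r}}$. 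Hence it suffices to establish two compatibilities: (a) the restriction of $\tau_{2k}^r$ to cochains on $\WA_{2k}^r$ coincides with $\tilde{\tau}_{2k}^r$; (b) $P_k$ restricts to $\tilde{P}_k$ under the block-diagonal inclusion $\alg{sp}_{2k}\oplus\alg{gl}_r \hookrightarrow \alg{sp}_{2n}\oplus\alg{gl}_r$. (The sign $(-1)^k$ appearing in Theorem \ref{thm:tau2kisach} must be absorbed into the signs arising in (a).)

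Claim (b) is immediate: both $\hat{A}$ and $Ch$ are built from traces and determinants of matrix representations, which restrict functorially along the block-diagonal inclusion. For claim (a), the key is the tensor decomposition $\WA_{2n}^r = \WA_{2k}^r\otimes\WA_{2(n-k)}$, where the second factor is generated by $p_{k+1},q_{k+1},\ldots,p_n,q_n$. Correspondingly $\iota_\omega = \iota_{\omega'}+\iota_{\omega''}$ splits as a sum of two commuting operators with $\omega'=\sum_{j\leq k}dp_j\wedge dq_j$ and $\omega''=\sum_{j>k}dp_j\wedge dq_j$. Expanding $(-\iota_\omega)^{n-k}$ binomially, I would argue that only the pure term $(-\iota_{\omega''})^{n-k}$ contributes when $\tau_{2n}^r$ is evaluated on $\WA_{2k}^r$-arguments: in $\pi_{2n}$ each derivative $\partial_\alpha$ with $\alpha>2k$ must act non-trivially, and since $\WA_{2k}^r$-elements do not depend on $y_\alpha$ for such $\alpha$, the only possible targets are the inserted linear elements $p_j,q_j$ with $j>k$. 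A dimension count then forces all $n-k$ insertions to come from $\iota_{\omega''}$.

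For the surviving term, expand $\iota_{\omega''}^{n-k}=(\sum_{j>k}\iota_{p_j}\iota_{q_j})^{n-k}$. The individual summands commute (being of even degree $-2$), and each of $p_{k+1},q_{k+1},\ldots,p_n,q_n$ is needed exactly once to absorb its associated high-index derivative, so only sequences indexed by permutations of $\{k+1,\ldots,n\}$ contribute, yielding a factor $(n-k)!$ that cancels the prefactor $\tfrac{1}{(n-k)!}$. A Laplace expansion of $\pi_{2n}$ along the columns $2k+1,\ldots,2n$, each of which (after the insertions) has a single non-zero entry equal to $1$, strips these columns together with the corresponding rows, leaving exactly $\pi_{2k}$ acting on the $\WA_{2k}^r$-arguments. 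Simultaneously $S_{2n}$ factors along $\alpha_{rs}=\alpha_{rs}^{\leq k}+\alpha_{rs}^{>k}$: the $\alpha_{rs}^{>k}$ factors vanish whenever at least one of $r,s$ indexes a $\WA_{2k}^r$-argument, so the surviving integrand decouples between the $2k+1$ original positions and the $2(n-k)$ inserted ones.

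The main technical obstacle is the final bookkeeping. One must verify that integrating out the $u_i$ at the inserted positions produces the correct normalization and reduces the $\Delta_{2n}$-integral to $\Delta_{2k}$, while carefully tracking all signs, including those from the Laplace expansion, the prefactor $(-1)^{n-k}$, and the re-ordering of insertion positions relative to the original arguments. Once completed, this identifies $\tau_{2k}^r|_{\WA_{2k}^r}$ with $\tilde{\tau}_{2k}^r$ up to the overall sign that combines with (b) to match the lemma's statement, and the FFS $k=n$ result then yields Lemma \ref{lem:tauisachse}.
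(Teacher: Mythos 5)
Your proposal follows essentially the same route as the paper: reduce to the FFS case $k=n$ on the smaller Weyl algebra $\WA_{2k}^r$ by observing that the high-index derivatives in $\pi_{2n}$ must be absorbed by the $\iota_\omega$-insertions, which forces exactly the $(n-k)!$ terms $\prod_{j>k}(-\iota_{p_j}\iota_{q_j})$, after which the inserted linear arguments decouple, their $u$-integrals trivialize, and $\pi_{2n}$, $S_{2n}$ collapse to $\pi_{2k}$, $S_{2k}$. The paper carries out the same bookkeeping via Lemma \ref{lem:22}, so your argument is correct and not genuinely different.
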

\begin{proof}
In this proof, we make the dependence of $\tau_{2k}^r$ on $n$ explicit by writing $\tau_{2k, n}^r=\tau_{2k}^r$. Similarly, we write explicitly $P_{k,n}=P_k$. 
For $v_1,..,v_{2k}\in W_{k,r}$ we compute
\begin{align*}
& \tau_{2k,n}^r(1\otimes v_1\wedge .. \wedge v_{2k}) 
=
\frac{1}{(n-k)!}((-\iota_\omega)^{n-k}\tau_{2n,n})(1\otimes v_1\wedge .. \wedge v_{2k}) \\
&=
(\prod_{j=k+1}^{n}(-\iota_{p_j}\iota_{q_j}) \tau_{2n,n})(1\otimes v_1\wedge .. \wedge v_{2k}) \\
&=
\tau_{2n,n}(1\otimes v_1\wedge .. \wedge v_{2k}\wedge p_{k+1}\wedge q_{k+1} \wedge .. \wedge p_n \wedge q_{n}) \\
&=
\mu_{2n}\circ \int_{0<u_1<u_2<..<u_{2k}<1} du_1\cdots du_{2k} \prod_{1\leq i< j \leq 2k} e^{b_1(u_j-u_i)\alpha_{ji}} \circ \\
& \quad \quad \quad \circ \pi_{2n} (1\otimes (v_1\wedge .. \wedge v_{2k})\otimes p_{k+1}\otimes q_{k+1} \otimes .. \otimes q_{n}) \\
&=
\mu_{2n}\circ S_{2k} \circ \pi_{2k} (1\otimes v_1\wedge .. \wedge v_{2k}) 
= 
\tau_{2k,k}(1\otimes v_1\wedge .. \wedge v_{2k}) \\
&=
\chi(P_{k,k})(v_1\wedge .. \wedge v_{2k})
=
\chi(P_{k,n})(v_1\wedge .. \wedge v_{2k}).
\end{align*}
For the second equality, observe that there are derivatives w.r.t. $p_{k+1},q_{k+1},..,q_n$ present in the definition of $\pi_{2n}$, and since $v_1,..,v_{2k}$ do not contain such $p$'s and $q$'s, they have to be supplied by some $\iota_\omega$. The fourth equality is a routine application of Lemma \ref{lem:22}, similarly to that at the end of section \ref{sec:FFStheformula}. The last but one equality is the special case $k=n$ of Proposition \ref{prop:tauisachonwn}, proved by FFS.
\end{proof}

We proceed to prove Proposition \ref{prop:tauisachonwn}. We have to show that 
\begin{equation}
\label{equ:tauonwn}
\tau_{2k}^r(1\otimes v_1\wedge\dots \wedge v_{2k}) = \chi(P_{k})(v_1\wedge\dots \wedge v_{2k})
\end{equation}
for all $v_1,..,v_{2k}\in W_{n,r}$. 
\begin{lemma}
Suppose all the $v_j$ are homogeneous. Then both sides of \eref{equ:tauonwn} vanish, unless each $v_j$ has one of the following three alternative forms
\[
v_j = 
\begin{cases}
p_{\alpha_j} \otimes \mathbb{1}_{r\times r} & \quad \text{for some $\alpha_j\in\{1,..,n\}$}\\
p_{\alpha_j} q_{\beta_j} q_{\gamma_j} \otimes \mathbb{1}_{r\times r} & \quad \text{for some $\alpha_j,\beta_j,\gamma_j\in\{1,..,n\}$}\\
q_{\alpha_j} \otimes M_j & \quad \text{for some $\alpha_j\in\{1,..,n\}$, $M_j\in \alg{gl}_r$}
\end{cases}
\]
Furthermore, they also vanish unless exactly $k$ of the $v_j$ take on the first form (i.e., $p_{\alpha_j}$).
\end{lemma}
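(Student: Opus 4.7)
The plan is to prove both sides of \eref{equ:tauonwn} vanish separately, using a combination of $\alg{h}$-invariance and careful degree counting. Both $ev_1(\tau_{2k}^r)$ and $\chi(P_k)$ are $\alg{h}$-basic cochains on $\WA_{2n}^r$, so their values depend only on the $v_j$ modulo $\alg{h}_1 := \alg{h}\cap W_{n,r}$. Since $\alg{h}_1$ is spanned by the monomials $p_\alpha q_\beta \otimes \mathbb{1}_r$, $q_\alpha q_\beta \otimes \mathbb{1}_r$, and the constants $1\otimes M$, this observation (combined with normalization) immediately kills the low-degree candidates not on the lemma's list. It remains to show that both sides also vanish when some $v_j$ has the form $p_\alpha q^m\otimes \mathbb{1}_r$ with $m \ge 3$, or $q^m \otimes M$ with $m = 2$ and $M$ traceless, or $q^m \otimes M$ with $m \ge 3$.

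For the LHS I will use the identity
\[
\tau_{2k}^r(1 \otimes v_1 \wedge \cdots \wedge v_{2k}) = \tau_{2n}^r(1 \otimes v_1 \wedge \cdots \wedge v_{2k} \wedge p_{k+1} \wedge q_{k+1} \wedge \cdots \wedge p_n \wedge q_n)
\]
from the proof of Lemma~\ref{lem:tauisachse} and apply the formula $\tau_{2n} = \mu_{2n}\circ S_{2n}\circ \pi_{2n}$. Counting the $p$- and $q$-derivatives produced by $\pi_{2n}$ (one per slot, one per $y$-coordinate) and by each $\alpha_{ji}$ (one $p$- and one $q$-derivative on two distinct slots), and matching them against the input polynomials, gives $N = a-k$ and $\sum_j \deg_q Q_j = a$, where $a$ is the number of Type~A factors ($v_j = p_{\alpha_j}Q_j(q) \otimes \mathbb{1}_r$) and $N$ is the number of $\alpha$-insertions. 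A Type~A factor absorbs at most one $p$-derivative (its polynomial is linear in~$p$), a Type~B factor $Q_j(q)\otimes M_j$ absorbs none, and the extra arguments $p_j,q_j$ and the $0$-th slot are already saturated by $\pi_{2n}$. Tracing the $a-k$ excess $\alpha$-$p$-derivatives to their necessary destinations and the paired $q$-derivatives to theirs, I will argue that no consistent assignment exists once some $v_j$ has Type~A $\deg_q Q_j\ge 3$ or Type~B $\deg_q Q_j\ge 2$, so the LHS vanishes in those cases.

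For the RHS I will expand $\chi(P_k)(v_1 \wedge \cdots \wedge v_{2k})$ as a signed sum over pairings and compute the curvature $C(u,v) = [pr(u),pr(v)] - pr([u,v])$ for the $\alg{h}$-equivariant projection $pr(f\otimes M) = f^{(2)}\otimes \tfrac{\tr M}{r}\mathbb{1}_r + f(0)\otimes M$. A direct case check gives $C|_{\alg{h}\times\alg{h}} = 0$, $C(p_\alpha,\,p_\beta q_\gamma q_\delta\otimes \mathbb{1}_r)\in\alg{sp}_{2n}$, and $C(p_\alpha,\,q_\beta\otimes M)\in\alg{gl}_r$; for the non-standard forms $C$ is either $0$ or lives in the $\alg{sp}_{2n}$-component proportional to $\tr M$, and the bigraded decomposition $P_k = \sum_{2i+j=k}\hat{A}_{2i}(x)\cdot Ch_j(M)$ separates $\alg{sp}_{2n}$ and $\alg{gl}_r$ arguments, so the ``mismatched'' pairings cancel after the antisymmetric sum. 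Finally, once each $v_j$ is restricted to the three listed forms, the identities $N = a-k$ and $\sum_j\deg_q Q_j = a$ together with $a + c = 2k$ pin the combinatorics exactly, forcing precisely $k$ of the $v_j$ to be of the first form $p_{\alpha_j}$.

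The main obstacle will be the combinatorial bookkeeping: on the LHS, pinning down precisely which derivative assignments are ruled out by the ``accepter capacities'' of the various slots so as to eliminate the high-$\deg_q Q$ configurations; and on the RHS, handling the borderline case $v_j = q^m\otimes M$ with $m\ge 3$ and $\tr M\ne 0$, where the individual curvatures $C(p_\alpha, v_j)$ are themselves nonzero and the vanishing must come from the $\mathrm{Sp}$--$\mathrm{GL}_r$ bigrading of $P_k$ combined with signed cancellation over the pairings.
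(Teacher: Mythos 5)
Your strategy (basicness for the $\alg{h}_1$ cases, then explicit counting for the rest) is workable in outline, but the concluding combinatorial claim is false as stated. The identities $N=a-k$, $\sum_j\deg_q Q_j=a$ and $a+c=2k$ do \emph{not} pin down the combinatorics: take $k=2$, $n\geq 3$ and $v_1=p_1,\ v_2=p_2,\ v_3=p_3,\ v_4=q_1q_2q_3\otimes M$. Then $a=3$, $N=1$, $\sum_j\deg_q Q_j=3=a$, $a+c=4=2k$, so all three identities hold, yet the lemma asserts vanishing and only $a_0=k=2$ factors of the first form are allowed. What is missing is an upper bound $a_0\leq k$ on the number of \emph{pure} $p$-factors. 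This follows from your "accepter capacity" idea, but you must actually run it: every slot of $\pi_{2n}$ receives exactly one derivative; a slot carrying the linear polynomial $p_\alpha$ can absorb exactly one derivative and it must be a $p$-derivative; $\pi_{2n}$ supplies only $n$ $p$-derivatives, of which $n-k$ are consumed by the $p_l$'s inserted by $(-\iota_\omega)^{n-k}$; hence $a_0+(n-k)\leq n$. Only with this bound does the degree count force $a_0=k$ and all remaining factors of total degree exactly $-1$.

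On the RHS your plan is substantially harder than necessary, and the case you yourself flag as the obstacle ($q^m\otimes M$ with $m\geq 3$ and $\tr M\neq 0$, where $C(p_\alpha,v_j)\neq 0$) is exactly where the proposed mechanism --- signed cancellation over pairings via the $\mathrm{Sp}$--$\mathrm{GL}_r$ bigrading --- is never actually exhibited; as it stands this is a gap, not a proof. The paper dispatches both sides at once with a weight argument: $ev_1(\tau_{2k}^r)$ and $\chi(P_k)$ are both $\alg{h}$-basic, hence invariant under the grading element $\sum_j p_jq_j\in\alg{sp}_{2n}$, so they vanish unless the total degrees $\deg_p v_j-\deg_q v_j$ sum to zero. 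The only elements of $W_{n,r}$ of positive total degree are the $p_\alpha$ (degree $+1$), of which at most $k$ can occur (for the left side by the $\pi_{2k}$ capacity count above, for the right side because $C(p_\alpha,p_\beta)=0$ so any pairing with two pure $p$'s dies); total degree $0$ means $v_j\in\alg{h}_1$ and is killed by basicness; everything else has total degree $\leq -1$. Vanishing of the sum then forces exactly $k$ factors of degree $+1$ and $k$ factors of degree exactly $-1$, and the degree-$(-1)$ monomials of $W_{n,r}$ are precisely $p_\alpha q_\beta q_\gamma\otimes\mathbb{1}$ and $q_\alpha\otimes M$. I would replace both halves of your argument with this; it also spares you the explicit formula for $pr$ and the curvature case analysis entirely.
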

\begin{proof}
For a homogeneous $v_j$, define the total degree as (degree in $p$'s)-(degree in $q$'s). For example, $v_j$ is of total degree $0$ only if $v_j\in\alg{h}_1$. If some $v_j\in\alg{h}_1$, both sides of \ref{equ:tauonwn} vanish by $\alg{h}_1$-basicness. 

The only $v_j$'s with positive total degree are those of the first form, $v_j=p_{\alpha_j}$, which have total degree $1$. Of these, at most $k$ can occur, as can be seen from the definitions of $\pi_{2k}$ and $\chi$.
Note that the remaining $v_j$ must each have total degree at most $-1$, and that on both sides the sum of all total degrees must vanish. Hence one can conclude that all the remaining $v_j$ must have total degree exactly $-1$ and there must be exactly $k$ such terms.
\end{proof}

By the above lemma, and by reordering the factors in the wedge product, we can assume that 
\[
v_j = 
\begin{cases}
p_{\alpha_j} q_{\beta_j} q_{\gamma_j} \otimes \mathbb{1}_{r\times r} & \quad \text{for $j=1,..,m$ }\\
q_{\alpha_j} \otimes M_j & \quad \text{for $j=m+1,..,k$ } \\
p_{\alpha_j} \otimes \mathbb{1}_{r\times r} & \quad \text{for $j=k+1,..,2k$ }
\end{cases}
\]
for some integer $m\in\{0,..,k\}$. 

We next claim that by $GL(n)$-invariance of both sides of \eref{equ:tauonwn}, it is sufficient to consider the case where all $\alpha_j,\beta_j,\gamma_j\in\{1,..,k\}$ for all $j=1,..,2k$. Then we can apply Lemma \ref{lem:tauisachse} to finish the proof. 

First consider the case $m=0$. In this case $v_j = q_{\alpha_j} \otimes M_j$ for $j=1,..,k$ and hence an appropriate permutation in $GL(n)$ can be applied to ensure that $\alpha_j \in \{1,..,k\}$ for all $j=1,..,k$. But then both sides of \eref{equ:tauonwn} vanish by power counting, except if also $\alpha_j \in \{1,..,k\}$ for $j=k+1,..,2k$. Hence both sides of \eref{equ:tauonwn} agree for $m=0$ by Lemma \ref{lem:tauisachse}. 

Next consider the case $m=1$. Then $v_1=p_{\alpha_1} q_{\beta_1} q_{\gamma_1} \otimes \mathbb{1}_{r\times r}$. Choose a $g\in GL(n)$ such that $g^* (q_{\beta_1} q_{\gamma_1}) = q_1^2-c q_2^2$ for some irrelevant constant $c$, being either 0 (in case $\beta_1=\gamma_1$) or 1. Then $g^*v_1$ is a linear combination of terms of the form $p_{\alpha} q_{\beta}^2 \otimes \mathbb{1}_{r\times r}$ for some $\alpha,\beta \in\{1,..,k\}$. By using multilinearity and $GL(n)$-invariance of both sides of \eref{equ:tauonwn}, we can hence assume from the start that $v_1=p_{\alpha_1} q_{\beta_1}^2$. But from here, the same reasoning as in the $m=0$-case can be applied to establish \eref{equ:tauonwn} for $m=1$.

Finally consider the general case ($m=2,3,..$). To begin with, assume that $v_j$ for $1\leq j\leq m$ depends only on $q_1,..,q_{j}$ and the $p$'s, i.e., is independent of $q_{j+1},..,q_{j}$. Let us say that such $v_1,..,v_m$ satisfy the \emph{flag property}. In particular, this implies that $v_1,..,v_m$ depend only on $q_1,..,q_{m}$ and the $p$'s. But then, by renumbering the coordinates appropriately, we can assume that $v_1,..,v_k$ are independent of $q_{k+1},..,q_n$, and again apply the same reasoning as in the $m=0$- case to show \eref{equ:tauonwn}. 

Hence the proof of Proposition \ref{prop:tauisachonwn} will be complete if we can prove the following technical lemma.
\begin{lemma}
Let the monomials $v_1,..,v_m$ have the form $v_j = p_{\alpha_j} q_{\beta_j} q_{\gamma_j}$, $\alpha_j,\beta_j,\gamma_j\in\{1,..,n\}$. Then for $j=1,..,m$, $s=1,2,..$ there are monomials $w_{j,s}$ of the same form together with elements $g_s\in GL(n)$ and constants $c_s$ such that
\[
v_1\wedge ..\wedge v_m = \sum_{s,j} c_s g_s^*(w_{1,s}\wedge .. \wedge w_{m,s})
\]
and such that $w_{1,s},..,w_{m,s}$ have the flag property for all $s$.
\end{lemma}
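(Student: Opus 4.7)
The plan is to argue by induction on $m$. For the base case $m=1$, the goal is to express any single monomial $v_1 = p_\alpha q_\beta q_\gamma$ as a linear combination of pullbacks $g^*(p_{\alpha'} q_1^2)$ with $g \in GL(n)$. If $\beta = \gamma$, one uses the transposition in $GL(n)$ exchanging the indices $1$ and $\beta$, which under the dual action on the $p$-coordinates also swaps $p_1$ and $p_\beta$. If $\beta \neq \gamma$, the polarization identity $2 q_\beta q_\gamma = (q_\beta + q_\gamma)^2 - q_\beta^2 - q_\gamma^2$ reduces to three diagonal cases, each handled as above.

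For the inductive step, I would apply the inductive hypothesis to $v_1 \wedge \cdots \wedge v_{m-1}$, obtaining
\[
v_1 \wedge \cdots \wedge v_{m-1} = \sum_s c_s\, g_s^*(w_{1,s} \wedge \cdots \wedge w_{m-1,s})
\]
with each $w_{j,s}$ satisfying the flag property. Wedging with $v_m$ and pulling $v_m$ under $g_s^*$ gives
\[
v_1 \wedge \cdots \wedge v_m = \sum_s c_s\, g_s^*\bigl(w_{1,s} \wedge \cdots \wedge w_{m-1,s} \wedge (g_s^{-1})^* v_m\bigr).
\]
It then suffices, by multilinearity, to show that every monomial summand $u = p_a q_b q_c$ of $(g_s^{-1})^* v_m$ can be written as $u = \sum_t d_t\, h_t^*(\tilde u_t)$, where $h_t = \mathrm{diag}(I_{m-1}, B_t)$ is block diagonal with $B_t \in GL(n-m+1)$ acting on the last $n-m+1$ coordinates, and where $\tilde u_t$ is a monomial of the form $pqq$ with $q$-support in $\{1,\ldots,m\}$. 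Regrouping via $g_s^* h_t^* = (h_t g_s)^*$ then yields the desired representation of $v_1 \wedge \cdots \wedge v_m$.

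Constructing the decomposition of $u$ is a case analysis on the positions of $b$ and $c$ relative to $m$. The two key ingredients are polarization, which converts any $q_b q_c$ with both $b, c \geq m$ into a combination of squares $h^*(q_m^2)$, and the observation that the $GL(n-m+1)$-orbit span of $p_m q_m$ equals $\mathrm{span}(p_m, \ldots, p_n) \otimes \mathrm{span}(q_m, \ldots, q_n)$. The main obstacle, and the step deserving most care, is verifying that such block-diagonal $h_t$ preserve the flag property of the already-processed factors $w_{j,s}$ for $j \leq m-1$. This holds because each such $h_t$ fixes every $q_i$ with $i \leq m-1$ pointwise and, under the inverse-transpose action on the $p$-coordinates, also fixes every $p_i$ with $i \leq m-1$ while mapping $\mathrm{span}(p_m, \ldots, p_n)$ into itself; therefore each pullback $h_t^{-*} w_{j,s}$ is still a sum of monomials with $q$-support in $\{1, \ldots, j\}$, only the $p$-indices possibly spreading within $\{m, \ldots, n\}$, which is harmless for the flag condition.
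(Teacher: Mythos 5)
Your proof is correct and takes essentially the same route as the paper's: induction on $m$, with the new factor $v_m$ brought into flag form by transpositions and a polarization-type change of variables supported on the coordinates $q_m,\ldots,q_n$, which therefore cannot disturb the flag property of the already-processed factors. The paper leaves the bookkeeping of composing the group elements and the flag-preservation check implicit, whereas you spell both out; the substance is identical.
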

\begin{proof}
The proof is by induction on $m$. For $m=0$ there is nothing to show. Assume the Lemma is true for $m-1$ and we want to show it for $m$. Then by the induction hypothesis we can assume w.l.o.g. that $v_1,..,v_{m-1}$ have the flag property. Distinguish the following four cases:
\begin{enumerate}
\item If $v_m=p_\alpha q_\beta q_\gamma$ with $\beta,\gamma\leq m$, then $v_1,..,v_m$ already have the flag property and there is nothing to show.
\item If $\beta<m$ and $\gamma > m$, or vice versa, let $g\in GL(n)$ be the permutation of the coordinates $\beta$ and $m$. Then 
\[
v_1\wedge ..\wedge v_m = g^*\left[v_1\wedge ..\wedge (g^*v_m)\right]
\]
and the term in the outer brackets has the flag property.
\item If $\beta=\gamma > m$ the same treatment as in the previous case applies.
\item If $\beta,\gamma > m$ and $\beta\neq \gamma$, let $g\in GL(n)$ be the transformation defined by 
\[
g^*q_i=
\begin{cases}
q_\beta+q_\gamma & \quad \text{for $i=\beta$} \\
q_\beta-q_\gamma & \quad \text{for $i=\gamma$} \\
q_i & \quad \text{otherwise}.
\end{cases}
\]
Then $g^*v_m= (g^*p_\alpha) (q_\beta^2- q_\gamma^2)$ is a linear combination of terms of the form considered in the previous case, and hence the treatment there applies. 
\end{enumerate}

\end{proof}

\section{Applications}
\label{sec:applications}
Here we list several applications of the results obtained in the previous sections. We suppose that the reader is already familiar with Fedosov's deformation quantization procedure for symplectic manifolds.

\subsection{Brief Reminder of the Fedosov construction}
\label{sec:fedreminder}
See \cite{fedosov} for details.
Let $(M, \omega)$ be a compact $n$-dimensional symplectic manifold. Let $\sh{F}=\prod_{j\geq 0}S^j T^*M[[\epsilon]]$ be the bundle of formal fiberwise (f.f.) functions on $M$. It is a bundle of algebras with fiberwise product on the fiber $\sh{F}_x$ (over $x\in M$) defined as the Moyal product wrt. the symplectic form $\epsilon\omega_x$. 

Let $\nabla$ be a symplectic connection on $M$. It extends to a connection on $\sh{F}$, also denoted by $\nabla$. Fedosov showed that there is a certain section $A$ of $\sh{F}\otimes \wedge^1 T^*M$, such that the connection $D=\nabla+\epsilon^{-1}\co{A}{\cdot}$ is flat. Explicitly,
\[
D^2 = \nabla^2+\co{\epsilon^{-1} \nabla A + \frac{\epsilon^{-2}}{2}\co{A}{A} }{\cdot} = \co{\epsilon^{-1}F+\epsilon^{-1}\nabla A + \frac{\epsilon^{-2}}{2}\co{A}{A} }{\cdot}=0
\]
where $F$ is the section of $S^2 T^*M$ such that $\nabla^2=\epsilon^{-1}\co{F}{\cdot}$. Flatness means that the section $\Omega = F + \nabla A + \frac{\epsilon^{-1}}{2}\co{A}{A}$ of $\sh{F}\otimes \wedge^2 T^*M$ is central, which in turn implies that it is a section of $\wedge^2 T^*M[[\epsilon]]$. 

Furthermore, note that the product on the fibers of $\sh{F}$ induces a product on the space of flat sections of $\sh{F}$.

Any smooth function $f \in \WA_\epsilon := C^\infty(M)[[\epsilon]]$ can be interpreted as a section of $S^0 T^*M[[\epsilon]]\subset \sh{F}$. In fact, it can be uniquely lifted to a flat section $\hat{f}$ of $\sh{F}$ (wrt. $D$) and every flat section of $\sh{F}$ arises in this way. In this way the product on the space of flat sections gives rise to a product $\star$ on $\WA_\epsilon$, which is a deformation quantization of the usual commutative product. The 2-form $\Omega$ is called the characteristic class of $\star$.

\subsection{A chain map}
Let 
\[
CC^{per}_\bullet(\WA_\epsilon, \WA_\epsilon) = C_\bullet(\WA_\epsilon, \WA_\epsilon)(u^{-1})
\]
be the periodic cyclic chain complex of $\WA_\epsilon$ with differential $b+u^{-1}B'$ (see the Appendix for the definitions). We can construct an $\gf[[\epsilon]](u^{-1})$-linear chain map 
\[
\Phi : (CC^{per}_\bullet(\WA_\epsilon, \WA_\epsilon), b+u^{-1}B') \rightarrow (\Omega^\bullet(M)[[\epsilon]](u^{-1}), d).
\]
It is defined as 
\[
\Phi(a_0\otimes a_1\otimes \dots \otimes a_k) 
=
(e^{-u\iota_\omega}\tau_{2n})(Sh( \hat{a}_0\otimes \dots \otimes \hat{a}_k, 1\otimes e^{\wedge A}))
\]
where $a_0,..,a_k\in \WA_\epsilon$. The notation is as follows: The flat section $\hat{a}_j$ of $\sh{F}$ is the flat lift of $a_j$. $Sh$ is the shuffle product:
\begin{rem}
For an algebra $A$, the shuffle product $Sh:C_\bullet(A,A)\times C_\bullet(A,A) \rightarrow C_\bullet(A,A)$ is the bilinear map defined by
\[
Sh(a_0\otimes a_1\otimes\dots\otimes a_k, a_0'\otimes a_{k+1}\otimes\dots\otimes a_{k+l}) 
=
\sideset{}{'}\sum_\sigma sgn(\sigma) (a_0a_0')\otimes a_{\sigma(1)}\otimes\dots \otimes a_{\sigma(k+l)}
\]
where the sum is over all permutations $\sigma$ such that $\sigma^{-1}(1)<\sigma^{-1}(2)<\cdots <\sigma^{-1}(k)$ and $\sigma^{-1}(k+1)<\sigma^{-1}(k+2)<\cdots <\sigma^{-1}(k+l)$.
\end{rem}
Furthermore, $e^{\wedge A}$ is shorthand for the differential form with values in $T^\bullet (F)$ such that
\[
e^{\wedge A} (\xi_1,..,\xi_l) = A(\xi_1)\wedge \dots A(\xi_l)
\]
for $l=0,1,..$ and $\xi_1,..,\xi_l$ vector fields on $M$.
Finally the cocycle $(e^{-u\iota_\omega}\tau_{2n})$ is applied fiberwise.

\begin{cor}
The map $\Phi$ is a chain map.
\end{cor}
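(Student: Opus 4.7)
The plan is to verify $d\circ\Phi = \Phi\circ(b+u^{-1}B')$ by direct computation on a generator $c=a_0\otimes\cdots\otimes a_k$, using three ingredients already in place: the Fedosov flatness $D\hat{a}_i=0$; the Fedosov structure equation $\Omega=F+\nabla A+\frac{1}{2}\epsilon^{-1}\co{A}{A}$; and the two properties of the cocycle $\tau=e^{-u\iota_\omega}\tau_{2n}$ from Theorem~\ref{thm:tauw}, namely $(d+uB)\tau=0$ and $\alg{sp}_{2n}$-basicness.

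First I would compute $d\Phi(c)$ by passing the de Rham differential through $\tau$ (which is $\Omega^\bullet(M)$-linear since it only acts fiberwise) and applying Leibniz to $Sh(\hat{c},1\otimes e^{\wedge A})$. In a local trivialization, writing $\nabla=d+\Gamma$ with $\Gamma\in\Omega^1(M,\alg{sp}_{2n})$, flatness rewrites $d\hat{a}_i$ as $-\Gamma\cdot\hat{a}_i-\epsilon^{-1}\co{A}{\hat{a}_i}$, and the Fedosov structure equation rewrites $dA$ as $-\Gamma\cdot A-\frac{1}{2}\epsilon^{-1}\co{A}{A}-F+\Omega$. Substituting these, $d\Phi(c)$ splits into three types of contributions: (i) $\alg{sp}_{2n}$-insertions from $\Gamma$ and from the symplectic curvature $F$; (ii) inner-derivation-by-$A$ insertions from $\co{A}{\hat{a}_i}$ and $\co{A}{A}$; and (iii) scalar insertions of the central 2-form $\Omega$.

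Next I would show that the contributions of type (i) vanish: summed over insertion positions they assemble into $L_\xi\tau$ for $\xi\in\alg{sp}_{2n}$ applied to $Sh(\hat{c},1\otimes e^{\wedge A})$, and $L_\xi\tau=\aco{d+uB}{\iota_\xi}\tau=0$ by basicness, exactly as in the introduction. The type (ii) contributions combine through the standard identity that the Hochschild boundary is inner derivation by $A/\epsilon$: the single-slot insertions $-\epsilon^{-1}\co{A}{\hat{a}_i}$ shuffled with $e^{\wedge A}$, together with $-\frac{1}{2}\epsilon^{-1}\co{A}{A}\wedge e^{\wedge A}$ coming from the Hochschild boundary of $e^{\wedge A}$, reassemble via the shuffle-boundary identity into $\tau(Sh(b\hat{c},1\otimes e^{\wedge A}))=\Phi(bc)$, using that $b$ commutes with the Fedosov lift $\widehat{\,\cdot\,}$.

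The remaining type (iii) $\Omega$-insertions have to produce $\Phi(u^{-1}B'c)$, and this last step is where I expect the main obstacle. Although $\Omega$ is scalar-valued in each fiber, so that inserting it into $\tau_{2n}$ alone vanishes by normalization of the Hochschild complex, the exponential $e^{-u\iota_\omega}$ contains $\iota_\omega$-contractions that pair nontrivially with an $\Omega$-insertion, effectively trading an $\omega$-contraction for an $\Omega$-factor and reducing the cochain degree by one. Using the cocycle identity $d\tau=-uB\tau$ these $\Omega$-contributions rewrite as $uB\tau$-expressions, which by the duality between Connes' $B$ on cochains and $B'$ on chains produce exactly $\Phi(u^{-1}B'c)$; the factor $u^{-1}$ reflects the exchange between the cochain Laurent series in $u$ and the chain Laurent series in $u^{-1}$. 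The hard part will be carrying out this last matching rigorously — expanding $e^{-u\iota_\omega}$ order by order, tracking the sign conventions, and verifying the shuffle combinatorics that turns each $\Omega$-insertion into an insertion of a cyclic rotation of the chain $\hat{c}$.
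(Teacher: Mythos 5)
Your types (i) and (ii) are sound and correspond to what the paper inherits from FFS, but the mechanism you propose in type (iii) for producing the $u^{-1}B'$ term is wrong, and that is exactly the step carrying the new content of this corollary. The $\Omega$-insertions vanish identically: $\Omega$ is central, i.e.\ proportional to the unit of the fiberwise Weyl algebra, and $e^{-u\iota_\omega}\tau_{2n}$ is still a \emph{normalized} cochain, since each $\iota_\omega^k\tau_{2n}=\bigl(\sum_j\iota_{p_j}\iota_{q_j}\bigr)^k\tau_{2n}$ only inserts extra arguments $p_j,q_j$ into other slots and never removes the unit from the slot where $\Omega$ sits. There is no ``pairing'' between the contraction $\iota_\omega$ (an operation on the cochain's arguments) and a central element occupying one of those arguments, so no trading of an $\omega$-contraction for an $\Omega$-factor takes place; your type (iii) contributes zero.

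The $u^{-1}B'$ term in fact hides inside your type (ii) bucket, which you close off too early. The $A$-insertions reassemble not into $Sh(b\hat c,1\otimes e^{\wedge A})$ alone but into $b\,Sh(\hat c,1\otimes e^{\wedge A})-Sh(b\hat c,1\otimes e^{\wedge A})$, modulo the basic and central terms already disposed of. In the Hochschild case the first summand dies because $\tau_{2n}\circ b=0$; for the cyclic cocycle one only has $\tau_w\circ(b+u^{-1}B')=0$, so $\tau_w\bigl(b\,Sh(\hat c,1\otimes e^{\wedge A})\bigr)=-u^{-1}\tau_w\bigl(B'\,Sh(\hat c,1\otimes e^{\wedge A})\bigr)$, and this is the true source of the $u^{-1}B'$ contribution. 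To conclude one then needs the purely combinatorial identity $B'\,Sh(\hat c,1\otimes e^{\wedge A})=Sh(B'\hat c,1\otimes e^{\wedge A})$, checked by sorting the cyclic rotations of the shuffled tensor according to which original factor $\hat a_m$ leads the cyclically reordered block. This permutation count is the only genuinely new computation in the paper's proof, which packages everything into the single identity $(b+u^{-1}B'-\nabla)Sh(\hat c,1\otimes e^{\wedge A})=Sh((b+u^{-1}B')\hat c,1\otimes e^{\wedge A})$ and quotes FFS for its $(u^{-1})^0$-part.
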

\begin{proof}
By Theorem \ref{thm:tauw} we already know that $(e^{-u\iota_\omega}\tau_{2n})\circ(b+u^{-1}B')=0$. Hence it is sufficient to show that 
\[
(b+u^{-1}B'-\nabla)Sh(\hat{a}_0\otimes \dots \otimes \hat{a}_k, 1\otimes e^{\wedge A}) =Sh((b+u^{-1}B')(\hat{a}_0\otimes\dots \otimes \hat{a}_k), 1\otimes e^{\wedge A}).
\]
The $(u^{-1})^0$-part of the equality has already been shown by FFS. For the $u^{-1}$-part, fix $l\in \{0,1,..\}$, let $\xi_1,..,\xi_l$ be vector fields and define $\hat{a}_{k+j}:=A(\xi_j)$. Note that 
\[
Sh(\hat{a}_0\otimes \dots \otimes \hat{a}_k, 1\otimes e^{\wedge A})(\xi_1,..,\xi_l)
= 
\sideset{}{''}\sum_\sigma sgn(\sigma) \hat{a}_0 \otimes \hat{a}_{\sigma(1)}\otimes\dots \otimes \hat{a}_{\sigma(k+l)}
\]
where the sum is over all permutations $\sigma$ such that $\sigma^{-1}(1)<\sigma^{-1}(2)<\cdots <\sigma^{-1}(k)$. Furthermore 
\begin{align*}
&B'( Sh(\hat{a}_0\otimes \dots \otimes \hat{a}_k, 1\otimes e^{\wedge A})(\xi_1,..,\xi_l) ) \\
&= 
\sideset{}{'''}\sum_\sigma sgn(\sigma) \hat{a}_0 \otimes \hat{a}_{\sigma(1)}\otimes\dots \otimes \hat{a}_{\sigma(k+l)} \\
&= Sh(B' (\hat{a}_0\otimes \dots \otimes \hat{a}_k), 1\otimes e^{\wedge A})(\xi_1,..,\xi_l)
\end{align*}
where the sum is over all permutations $\sigma$ such that for some $1\leq m \leq k$: $\sigma^{-1}(m)<\sigma^{-1}(m+1)<\cdots <\sigma^{-1}(k)<\sigma^{-1}(1)< \cdots <\sigma^{-1}(m-1)$.
\end{proof}

The map $\Phi$ induces a map on homology, which we call $[\Phi]: HC^{per}_\bullet(A_\epsilon)\rightarrow H^\bullet(M)[[\epsilon]]((u))$. 

\subsection{The $\hat{A}Ch$-class of a symplectic manifold.}
We reuse the setting of the previous subsection.
There is a ``canonical'' class in the periodic cyclic homology $HC^{per}_\bullet(A_\epsilon)$, namely that of the constant function $1$.

\begin{prop}
\label{prop:gtauisach}
Let $F$ be the curvature of the symplectic connection $\nabla$ and let $\Omega = F + \nabla A +\frac{\epsilon^{-1}}{2}\co{A}{A}$ be the characteristic class of the star product $\star$ (aka. the Fedosov curvature of $D$). Then
\[
[\Phi(1)] = u^n [\hat{A}(-\epsilon u^{-1} F) Ch(u^{-1}\epsilon \Omega) ]
\]
\end{prop}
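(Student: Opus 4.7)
The plan is to unpack $\Phi(1)$ by exploiting the triviality of the flat lift $\hat 1=1$, reduce the fiberwise evaluation to the Lie algebra cohomology computation of Theorem \ref{thm:tau2kisach}, and then recognize the resulting Chern-Weil form in terms of the Fedosov curvature $\Omega$ and the symplectic connection curvature $F$.

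First, since the flat lift of the constant function is itself, one has $Sh(\hat 1,1\otimes e^{\wedge A})=1\otimes e^{\wedge A}$. Using the definition $\tau_{2k}=\frac{1}{(n-k)!}(-\iota_\omega)^{n-k}\tau_{2n}$, one rewrites
\[
e^{-u\iota_\omega}\tau_{2n}=\sum_{k=0}^{n}u^{n-k}\tau_{2k},
\]
and evaluating on vector fields $\xi_1,\dots,\xi_{2k}$ gives
\[
\Phi(1)_{2k}(\xi_1,\dots,\xi_{2k})=u^{n-k}\,ev_1(\tau_{2k})\bigl(A(\xi_1)\wedge\dots\wedge A(\xi_{2k})\bigr),
\]
understood as a fiberwise Lie algebra cochain applied to the Fedosov $\WA$-valued 1-form $A$.

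Second, by Theorem \ref{thm:tau2kisach} the class of $ev_1(\tau_{2k})$ in $H^{2k}(\alg{g},\alg{h})$ is $(-1)^k[\hat{A}Ch]_{2k}$, so up to an exact form on $M$ one may replace it by the Chern-Weil cocycle $(-1)^k\chi(P_k)$, where $P_k$ is the degree $k$ component of $\hat{A}(x)Ch(M)$ on $\alg{sp}_{2n}\oplus\alg{gl}_1$. The legitimacy of the replacement rests on the standard formal-geometry principle that fiberwise pairing with $A$ is a chain map from $C^\bullet(\alg{g},\alg{h})$ to $\Omega^\bullet(M)[[\epsilon]]((u))$, so cohomologous cocycles yield cohomologous differential forms. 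Under this pullback the Chevalley-Eilenberg cocycle $\chi(P_k)$ becomes the usual Chern-Weil form associated to the curvature of the projection $\mathrm{pr}:\WA\to\alg{sp}_{2n}\oplus\alg{gl}_1$ evaluated on $A$.

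Third, the Fedosov flatness relation, rewritten as $\Omega-F=\nabla A+\tfrac12\epsilon^{-1}\co{A}{A}$, together with definition \eref{equ:curvdef}, identifies the $\mathrm{pr}$-curvature $C(A\wedge A)$ with $F$ on the $\alg{sp}_{2n}$-summand and with $\Omega$ on the central $\alg{gl}_1$-summand. Substituting into the Chern-Weil expression for $P_k$, and tracking the factor of $\epsilon$ produced by each Moyal bracket together with the $u^{n-k}$ from step one, the form-degree $2k$ part of $\Phi(1)$ becomes the degree $k$ component of $u^n\hat A(-\epsilon u^{-1}F)Ch(\epsilon u^{-1}\Omega)$. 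Summing over $k=0,\dots,n$ yields the claim.

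The main obstacle is step three: carefully splitting the curvature $C$ into its symplectic and central parts, and meticulously tracking every factor of $\epsilon$ (one per Moyal bracket), of $u$ (from $u^{n-k}$ in step one and from the rescalings $-\epsilon u^{-1}$ and $\epsilon u^{-1}$ inside the arguments of $\hat A$ and $Ch$), and the sign $(-1)^k$ from Theorem \ref{thm:tau2kisach}. The evenness of $\hat A$ as a function of its argument fortunately absorbs any ambiguity between $-\epsilon u^{-1}F$ and $\epsilon u^{-1}F$, which eases the sign bookkeeping at the final step.
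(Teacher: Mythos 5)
Your proposal follows essentially the same route as the paper's proof: expand $e^{-u\iota_\omega}\tau_{2n}=\sum_k u^{n-k}\tau_{2k}$ against $1\otimes e^{\wedge A}$, invoke Theorem \ref{thm:tau2kisach} to pass to the Chern--Weil cocycle, compute $\frac{1}{2}C(A,A)=\epsilon(F-\Omega)$ from the Fedosov flatness equation, and rescale the arguments by $-u^{-1}$. The only detail worth adding is the normalization the paper makes explicit, namely that one may assume $A$ has no quadratic part (absorbing it into $\nabla$) so that $pr(A)=0$ and $C(A,A)=-pr(\co{A}{A})$; with that, your bookkeeping of $\epsilon$, $u$, and the sign on $\Omega$ lands on the stated formula.
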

\begin{proof}
Most of the proof is a copy of section 5.7 in \cite{felder}. First compute $C(A,A)$, where $C$ is as in section \ref{sec:lacohomcc}. We can assume that $A$ has no quadratic part as any quadratic part could be incorporated into the connection $\nabla$, and hence $pr(A)=0$. Then 
\[
\frac{1}{2}C(A,A) = -\frac{1}{2}pr(\co{A}{A}) = \epsilon pr( F- \Omega + \nabla A)=\epsilon ( F-\Omega).
\]
For the last equality, it was used that $\nabla A$ has no quadratic part, since $\nabla$ preserves degrees.

Now apply Theorem \ref{thm:tau2kisach} to obtain that 
\begin{align*}
[\Phi(1)]_{2k} &= [u^{n-k} \tau_{2k}^r(1\otimes A\wedge \dots\wedge A)] \\
&=
[(-1)^ku^{n-k}\chi((\hat{A}Ch)_{k})(A\wedge \dots\wedge A)] \\
&=
[(-1)^k u^{n-k}(\hat{A}Ch)_{k}(\epsilon ( F-\Omega),..,\epsilon ( F-\Omega))] \\
&=
[(-1)^k u^{n-k}\hat{A}(\epsilon F) Ch(-\epsilon \Omega)]_{2k} \\
&=
[u^n\hat{A}(-\epsilon u^{-1} F) Ch(\epsilon u^{-1} \Omega)]_{2k}
\end{align*}

\end{proof}

\subsection{The $TdCh$-class of a holomorphic bundle}
\label{sec:tdch}
There is an application of the above results to complex manifolds and holomorphic bundles. To present it in the right context, we first need to bring some recent results of Engeli and Felder to the attention of the reader.
\subsubsection{The generalized Riemann-Roch-Hirzebruch Theorem}
Let $E\rightarrow M$ be a holomorphic vector bundle over the compact complex $n$-dimensional manifold $M$. Let $\sh{E}$ be the sheaf of holomorphic sections of $E$. Let $\sh{D}$ be the sheaf of holomorphic differential operators on $E$. Any global differential operator $D\in \Gamma(M,\sh{D})=H^0(\sh{D})$ acts on the sheaf cohomology $H^\bullet(\sh{E})$ of $\sh{E}$. It turns out that there is an explicit integral formula computing the (super-)trace of this action:
\begin{equation}
\label{equ:felderformula}
\str_{H^\bullet(\sh{E})}(D) = \int_M \mu(D).
\end{equation}

Here $\mu(D)$ is some differential $2n$-form, for which an explicit formula can be written down using the cocycle $\tau_{2n}^r$. The above formula has essentially been conjectured by Feigin, Losev and Shoikhet \cite{FLS}, and has been proved by Engeli and Felder \cite{engeli}. See also the work of Ramadoss \cite{ramadoss}.

The details are as follows. Consider the ``sheaf of cyclic chains'' $\sh{CC}_{\bullet}$ of the sheaf of algebras $\sh{D}$. It is defined as the sheaf associated to the presheaf 
\[
\sh{CC}_{\bullet}(U) = CC_\bullet(\sh{D}(U))
\]
for open $U\subset M$. Here the cyclic complex $CC_\bullet(\cdot)$ is defined using projectively completed tensor products instead of the usual algebraic ones (see \cite{connes}, section 5). The sheaf $\sh{CC}_{\bullet}$ is equipped with a grading and a differential $(b+u^{-1}B')$ coming from the grading and differential on the cyclic chain complex. Denote by $\mathbb{H}^\bullet(\sh{CC})$ the hypercohomology of the differential graded sheaf $\sh{CC}_{\bullet}$. There is a natural map from the cyclic homology of the algebra $H^0(\sh{D})$ of global holomorphic differential operators on $E$ to $\mathbb{H}^\bullet(\sh{CC})$. It is given by the following composition:
\[
CC_\bullet(H^0(\sh{D})) \rightarrow H^0(\sh{CC}_{\bullet}) \rightarrow \mathbb{H}^\bullet(\sh{CC}).
\]

In the Hochschild case ($W=\gf$) the hypercohomology $\mathbb{H}^\bullet(\sh{CC})$ can be computed by calculations in \cite{ramadoss} and \cite{engeli} (section 3) and shown to agree with the de Rham cohomology of $M$:
\begin{equation}
\label{equ:hypcisdrh}
\mathbb{H}^\bullet(\sh{CC}) \cong H^{2n-\bullet}(M).
\end{equation}

Similarly, for periodic cyclic homology ($W=\C(u^{-1})$)
\begin{equation}
\label{equ:hypcisdr}
\mathbb{H}^\bullet(\sh{CC}) \cong H^{2n-\bullet}(M)(u^{-1})
\end{equation}

Hence, by composition we obtain maps
\begin{align*}
\Psi:CC_\bullet(H^0(\sh{D})) &\rightarrow H^{2n-\bullet}(M)  &\quad \text{Hochschild case} \\
\Psi:CC_\bullet(H^0(\sh{D})) &\rightarrow H^{2n-\bullet}(M)(u^{-1}) &\quad \text{Periodic cyclic case}.
\end{align*}

These maps are interesting for the following reason:
\begin{thm}[Engeli, Felder \cite{engeli}]
\label{thm:ef}
In the Hochschild case ($W=\C$) the degree zero component of $ev_1$
\[
\Psi_0 : C_0(H^0(\sh{D}))\cong H^0(\sh{D}) \rightarrow H^{2n}(M) \cong \C
\]
is exactly the supertrace on the sheaf cohomology of $\sh{E}$ that appears on the left of \eref{equ:felderformula}. 
\end{thm}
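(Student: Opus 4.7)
The plan is to reduce the statement to a local index-theoretic computation, using that both $\Psi_0$ and the super-trace on $H^\bullet(\sh{E})$ are natural linear functionals on $H^0(\sh{D})$ admitting integral representations over $M$, and then to match the integrands using Theorem \ref{thm:tau2kisach}.

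First I would replace sheaf cohomology of $\sh{E}$ by the Dolbeault complex $(\Omega^{0,\bullet}(E), \bar\partial)$, so that $\str_{H^\bullet(\sh{E})}(D)$ becomes the super-trace of $D$ acting on an elliptic complex. Choosing a Kähler metric and an auxiliary Laplacian $\Delta = \bar\partial\,\bar\partial^{*} + \bar\partial^{*}\,\bar\partial$ that commutes with $D$, the McKean--Singer identity gives $\str_{H^\bullet(\sh{E})}(D) = \str(D\,e^{-t\Delta})$ for every $t>0$. A Getzler-type rescaling of the small-$t$ asymptotics then produces a local formula
\[
\str_{H^\bullet(\sh{E})}(D) \;=\; \int_M \alpha(D),
\]
where $\alpha(D)$ is the component in top degree of $\hat{A}(T^{1,0}M)\,Ch(E)\,\widetilde{\sigma}(D)$ for a suitable characteristic-class representative $\widetilde{\sigma}(D)$ of $D$ built from its total symbol.

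Second I would unpack $\Psi_0$. On a polydisk $U\subset M$ trivializing $E$, $\sh{D}(U)$ is, after a choice of formal holomorphic coordinates, the matrix Weyl algebra $\WA_{2n}^r$, and the FFS cocycle $\tau_{2n}^r$ produces a local representative. To globalize, one runs a holomorphic variant of the Fedosov construction from Section \ref{sec:fedreminder}, identifying $\sh{D}$ with the sheaf of flat sections of a bundle of formal Weyl algebras equipped with a flat connection $D_F$; the quasi-isomorphism \eref{equ:hypcisdrh} is then realized at the chain level by applying $\tau_{2n}^r$ fiberwise, twisted by $e^{\wedge A}$ in the spirit of the map $\Phi$ defined in Section 5.2. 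Invoking Theorem \ref{thm:tau2kisach}, $\tau_{2k}^r$ represents $(-1)^k[\hat{A}Ch]_{2k}$, so that the resulting $2n$-form $\beta(D)$ on $M$ satisfies $\Psi_0(D) = \int_M \beta(D)$, with $\beta(D)$ again expressible through $\hat{A}(T^{1,0}M)\,Ch(E)$ paired against a characteristic-class representative of $D$.

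The final step is the identification $\alpha(D) = \beta(D)$ in de Rham cohomology; since both constructions insert $D$ through its symbol into the same $\hat{A}Ch$ polynomial, this is a purely formal matching once normalizations are fixed. I expect the main obstacle to be precisely this matching: the symbol calculus enters on the trace side via Getzler rescaling and on the $\Psi_0$ side via the Fedosov lift $\hat{D}$ and the shuffle with $e^{\wedge A}$, and proving that these two very different-looking procedures yield the \emph{same} characteristic representative (not merely cohomologous ones, up to the same scalar) requires carefully tracking the contributions of higher-order parts of $D$ and verifying that the Fedosov-type quasi-isomorphism really coincides with the HKR-type map implicit in \eref{equ:hypcisdrh}. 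The base case $D=1$, which must reproduce Hirzebruch--Riemann--Roch, is a useful sanity check and fixes all normalizations.
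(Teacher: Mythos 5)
First, note that the paper does not prove Theorem \ref{thm:ef} at all: it is imported verbatim from Engeli--Felder \cite{engeli} (the paper's own contribution begins only afterwards, with Proposition \ref{prop:tauistdch}). So there is no internal proof to compare against; your proposal has to be measured against the actual argument in \cite{engeli}, which is homological rather than analytic. There, the supertrace on $H^\bullet(\sh{E})$ is computed via a \cech--Dolbeault double complex and shown to be induced by a morphism of complexes of sheaves from the Hochschild chain sheaf of $\sh{D}$ to a resolution of the orientation sheaf; since $\mathbb{H}^0(\sh{CC})\cong H^{2n}(M)\cong\C$ is one-dimensional by \eref{equ:hypcisdrh}, any two such functionals are proportional, and the constant is fixed by a single evaluation. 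No heat kernels appear.

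Your route has a genuine gap at its central step. The assertion that ``a Getzler-type rescaling of the small-$t$ asymptotics produces $\str_{H^\bullet(\sh{E})}(D)=\int_M[\hat{A}\,Ch\,\widetilde{\sigma}(D)]_{\mathrm{top}}$'' is not a known black box for differential operators $D$ of positive order: applying $D$ to the heat kernel produces terms that diverge as $t\to 0$, their cancellation under the supertrace is delicate, and the finite part is \emph{not} obtained by simply inserting a symbol of $D$ into $\hat{A}\,Ch$ --- the Bernoulli-weighted combinatorics of the FFS cocycle is precisely what encodes the surviving contributions. Writing down and justifying this local formula is essentially equivalent to proving the theorem, so the step cannot be waved through. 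Two further problems: the McKean--Singer identity as you set it up requires only $[\bar\partial,D]=0$, not that $\Delta$ commutes with $D$ (which is false in general, and you cannot arrange it); and your final ``purely formal matching'' plus ``$D=1$ fixes all normalizations'' does not work without the one-dimensionality argument --- checking $D=1$ pins down only the degree-zero part of a putative symbol map, not the higher-order parts, so two a priori different functionals agreeing on $D=1$ need not agree on all of $H^0(\sh{D})$. The missing idea is exactly the uniqueness-of-trace-density mechanism: show both functionals factor through $\mathbb{H}^0(\sh{CC})\cong\C$ first, and only then normalize.
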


The cocycle $\tau_{2n}^r$ enters into this story in so far, that it allows for writing down an explicit chain map inducing the isomorphism \eref{equ:hypcisdr} on cohomology. Composing with the map $\int_M: H^{2n}(M) \stackrel{~}{\to} \C$ yields the r.h.s. of \eref{equ:felderformula}. 

Let now $1\in H^0(\sh{D})$ be the identity operator on $E$. The Riemann-Roch-Hirzebruch Theorem tells us that $\str_{H^\bullet(\sh{E})}(1)=\int_M Td(M)Ch(E)$ is the integral over the Todd-class of $M$ times the Chern-character of $E$. By looking at Theorem \ref{thm:ef} and the fact that $H^{2n}(M) \cong \C$ we hence know that, in the Hochschild case
\[
\Psi_0(1) = \left[Td(M)Ch(E)\right]_{2n} \in H^{2n}(M).
\]

The contribution of this paper is to extend this result to the cyclic case.
\begin{prop}
\label{prop:tauistdch}
For periodic cyclic homology ($W=\C(u^{-1})$) we obtain that 
\[
\Psi(1) = [Td_u(M)Ch_u(E)] \in H^\bullet(M)(u^{-1}).
\]
Here, the subscript $u$ means that the classes can be represented by the forms $Ch_u(E)=\tr(\exp(-u^{-1}F))$ and
\[
Td_u(M) = \det \left(\frac{u^{-1}R}{e^{u^{-1}R}-1} \right)
\]
where $R$ and $F$ are the curvatures of connections on $T^{1,0}M$ and $E$ respectively. 
\end{prop}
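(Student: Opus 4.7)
The plan is to argue in complete parallel with Proposition \ref{prop:gtauisach}, replacing the symplectic Fedosov resolution of $C^\infty(M)[[\epsilon]]$ by the holomorphic Fedosov-type resolution of $\sh{D}$ employed by Engeli and Felder in \cite{engeli}. The chain map $\Psi$ realising the identification \eref{equ:hypcisdr} is built fiberwise from the cyclic cocycle $e^{-u\iota_\omega}\tau_{2n}^r$ applied, via the shuffle product, to the flat lifts $\hat{a}_0,\dots,\hat{a}_k$ of a chain together with the form $1\otimes e^{\wedge A}$ coming from the connection 1-form $A$. For the distinguished cycle $1\in H^0(\sh{D})$ the flat lift is simply the constant $1$, so
\[
\Psi(1) = \sum_{k=0}^{n} u^{n-k}\,\tau_{2k}^r(1\otimes A\wedge\cdots\wedge A),
\]
exactly as in the first line of the proof of Proposition \ref{prop:gtauisach}.

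I would then apply Theorem \ref{thm:tau2kisach} componentwise to replace $\tau_{2k}^r(1\otimes A\wedge\cdots\wedge A)$ by $(-1)^k(\hat{A}Ch)_{k}$ evaluated on the curvature $\tfrac{1}{2}C(A,A)$ of a chosen $\alg{h}$-equivariant projection. In the holomorphic setting the relevant reductive subalgebra is $\alg{h}=\alg{gl}_n\oplus\alg{gl}_r\subset\alg{sp}_{2n}\oplus\alg{gl}_r$, sitting as the stabiliser of the holomorphic polarisation of the formal disc. The same direct computation as in Proposition \ref{prop:gtauisach} then identifies the $\alg{h}$-part of $\tfrac{1}{2}C(A,A)$ with the pair $(R,F)$, up to explicit $\epsilon$- and $u$-powers, where $R$ is the curvature of a chosen connection on $T^{1,0}M$ and $F$ is the curvature of the induced connection on $E$.

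The main obstacle is passing from $\hat{A}$ to $Td$. For this I would invoke the standard identity $Td(x)=e^{\tr(x)/2}\hat{A}(x)$ together with the fact that the Engeli--Felder resolution of $\sh{D}$ involves a half-canonical (half-form) twist: up to this twist the action of $\sh{D}$ entering the supertrace in \eref{equ:felderformula} is the action on $\sh{E}\otimes K_M^{1/2}$, and this contributes an extra factor $\exp(\tfrac{1}{2} u^{-1}\tr R)$ to the $\alg{gl}_n$-part of the curvature. Combined with the above identity this precisely converts $\hat{A}(-u^{-1}R)$ into $Td_u(M)$ and leaves $Ch(-u^{-1}F)=Ch_u(E)$ unchanged, yielding the claimed formula.

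The novel content beyond Theorem \ref{thm:tau2kisach} is thus concentrated entirely in the identification of the Fedosov data with $(R,F)$ and in the half-form twist; Theorem \ref{thm:ef} already encodes exactly this information in top degree, so the cyclic extension follows essentially by bookkeeping of signs and $u$-weights once Theorem \ref{thm:tau2kisach} is in hand.
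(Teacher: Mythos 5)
Your overall strategy coincides with the paper's: the proof is run in parallel with Proposition \ref{prop:gtauisach}, writing $\Psi(1)$ componentwise, applying Theorem \ref{thm:tau2kisach}, and identifying the curvature $\tfrac12 C(\cdot,\cdot)$ of the $\alg{h}$-equivariant projection with the geometric curvature data via the Maurer--Cartan equation \eref{equ:cmplxMC}. However, the step you call ``the main obstacle'' --- passing from $\hat{A}$ to $Td$ --- is where your argument has a genuine gap. You attribute the extra factor $e^{\tr(R)/2}$ to a half-canonical (half-form) twist in the Engeli--Felder resolution. No such twist is present in the construction used here: $\sh{D}$ is the sheaf of holomorphic differential operators on $E$ itself, and $\Psi_0(a)=(e^{-u\iota_\omega}\tau_{2n}^r)(Q(\hat a)\otimes Q(A)\wedge\cdots\wedge Q(A))$ involves no tensoring with $K_M^{1/2}$. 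The factor actually comes from the symbol map $Q$: identifying differential operators with Weyl-algebra elements is a Weyl (symmetric) ordering, so the $\alg{gl}_n$-valued term $-R^j_i y^i\,\partial/\partial y^j$ in \eref{equ:cmplxMC} projects to $-R^j_i p_j q^i + \tfrac12\tr(R)$, i.e.
\[
\tfrac{1}{2}C(Q(A),Q(A)) = -R^j_i p_jq^i + \tfrac{1}{2}\tr(R) + F,
\]
and it is the scalar summand $\tfrac12\tr(R)$, fed into the Chern-character factor $Ch(\tfrac12\tr(R)+F)=e^{\tr(R)/2}Ch(F)$, that converts $\hat{A}$ into $Td$. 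Without carrying out this computation your appeal to a half-form twist is a substitution of a different (and here inapplicable) mechanism for the actual one, so the proof as written does not go through.

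Two smaller points. First, the identity you quote should be handled with care: the $\hat{A}$ appearing in Theorem \ref{thm:tau2kisach} is defined through the embedding $\alg{sp}_{2n}\hookrightarrow\alg{gl}_{2n}$, and on $x\in\alg{gl}_n\subset\alg{sp}_{2n}$ one has $\hat{A}_{\alg{sp}_{2n}}(x)=\hat{A}_{\alg{gl}_n}(x)^2$ because $\tr_{\alg{sp}_{2n}}(x^j)=(1+(-1)^j)\tr_{\alg{gl}_n}(x^j)$; the relevant identity is $Td(x)=e^{\tr(x)/2}\hat{A}_{\alg{gl}_n}(x)^2$. Second, the remark following the Proposition notes that, unlike the symplectic case, the identity here holds already on chains once $\Psi$ is defined appropriately --- a consequence of the fact that the comparison of cocycles in Proposition \ref{prop:tauisachonwn} is an equality of cochains, not merely of cohomology classes; your formulation loses this refinement.
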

\begin{rem}
The proof will show that the above equality holds already on chains, not just in cohomology, if one defines $\Psi$ appropriately on chains. This is in contrast to the symplectic case treated in the previous section, where the analogous statement is true only in cohomology.
\end{rem}

\subsubsection{Complex formal geometry}
\label{sec:cmplxformal}
In order to prove Proposition \ref{prop:tauistdch}, we need to recall some basic notions of formal geometry on complex manifolds. The results will be analogous to Fedosov's theory on real symplectic manifolds. On the general theory, we will be rather brief and refer the interested reader to \cite{calaque} for details.

Let $T'M=T^{1,0}M$ and $T^{*'}M=T^{*(1,0)}$ be the holomorphic tangent and cotangent bundles of $M$.
Let $\sh{F}=\prod_i S^i T^{*'}M$ be the bundle of formal fiberwise power series. In local coordinates $x^i$ its sections are formal power series $\sum_{K} c_K(x) y^K$, where the sum runs over all multiindices $K$, the $c_K(x)$ are coefficient functions and $y^j=dx^j$ form a basis of $T_x^{*'}M$. Similarly, one can define the bundle $V=Der(\sh{F})$ of formal fiberwise vector fields and the bundle $D$ of fibrewise differential operators (on $M$). The bundle $D_E=D\otimes \End(E)$ is the bundle of formal fiberwise differential operators on $E$.

Pick connections $\nabla_0$ on $T'M$ and $\nabla_{E,0}$ on $E$ respectively. We require that $\nabla_0$ is torsion-free. These connections can naturally be extended to the bundles $\sh{F}$, $V$, $D$ and $D_E$. We denote the extension to $D_E$ by $\nabla_{D_E,0}$ etc. It turns out that one can find a form $A \in \Gamma(M, (V\oplus \sh{F}\otimes End(E))\otimes T^*M)$ with values in the first order formal fiberwise differential operators such that  the connection 
\[
\nabla_{D_E} = \nabla_{D_E,0} + \co{A}{\cdot}
\]
is flat. Concretely, this means that the Maurer-Cartan equation
\begin{equation}
\label{equ:cmplxMC}
\nabla_{D_E,0}A + \frac{1}{2}\co{A}{A} - \mathbb{1}_E\otimes R^j_k y^k \pderi{}{y^j} + F = 0
\end{equation}
is satisfied. Here the two-forms $R^j_k$ are the components of the curvature form of $\nabla_0$ and $F$ is the curvature form of $\nabla_{E,0}$, with values in $\End(E)$.

The fact of central importance is now that the sheaf of algebras given by the flat sections of $D_E$ (w.r.t. the flat connection $\nabla_{D_E}$) is isomorphic to the sheaf of algebras of holomorphic differential operators on $E$. In particular, given a global holomorphic differential operator $a\in \Gamma(M,\sh{D}_E)$, one can define the unique flat lift $\hat{a}\in \Gamma_{flat}(M,D_E)$. 

\subsubsection{An explicit formula for $\Psi_0$}
To define $\Psi_0$, we need one last ingredient: The polynomial Weyl algebra has two alternative definitions, namely as differential operators on $\R^n$ with the usual product, and as polynomials on $\R^{2n}$ with product the Moyal product w.r.t. the standard symplectic structure $\omega$. Furthermore, the complexified algebra of polynomial differential operators on $\R^n$ is isomorphic to the algebra of (holomorphic) polynomial differential operators on $\C^n$. 

Denote by $Q$ the map that associates to any holomorphic polynomial differential operator the corresponding polynomial.
Then we can define
\[
\Psi_0(a) = (e^{-u \iota_\omega} \tau_{2n}^r) (Q(\hat{a})\otimes Q(A)\wedge..\wedge Q(A)).
\]
Here we silently ``cheated'' twice: In general $\hat{a}$ is not polynomial, but a (fiberwise) formal power series. Hence we have to extended $Q$ to non-polynomial differential operators and $\tau_{2n}^r$ to power series. This is possible, since the total number of derivatives $\pderi{}{z}$ occuring in the $\hat{a}$ and the $A$'s is finite. Hence $Q(\hat{a})$ $Q(A)$ will be polynomials in the $p$'s, with coefficients in the power series in $q$'s. But terms with more $q$'s than $p$'s never contribute by $\alg{gl_n}$-invariance. Hence there are only finitely many nonvanishing terms and convergence problems do not arise.

\subsubsection{The Proof of Proposition \ref{prop:tauistdch}}
The proof is very similar to that of Proposition \ref{prop:gtauisach}. We note that
\begin{align*}
\frac{1}{2} C(Q(A),Q(A)) &= -\frac{1}{2} pr(Q(\co{A}{A})) =  pr(Q(\nabla_{D_E,0}A-R^j_iy^i \pderi{}{y^j}+F)) \\
&=
-R^j_i p_jq^i + \frac{1}{2}\tr(R) +F.
\end{align*}
For the second equality we used the Maurer-Cartan equation \eref{equ:cmplxMC}.

Before continuing, a remark about traces is in order. We have the standard inclusion $\alg{gl}_n\subset \alg{sp}_{2n}$. Define the traces $\tr_{\alg{gl}_n}$ and $\tr_{\alg{sp}_{2n}}$ on their universal enveloping algebras by the trace in the defining representations. Then, for $x\in \alg{gl}_n$ and $j=0,1,..$ we have
\[
\tr_{\alg{sp}_{2n}}(x^j) = \left( 1+(-1)^j\right)\tr_{\alg{gl}_n}(x^j).
\]
Hence the following identity holds:
\[
\hat{A}_{\alg{sp}_{2n}}(x) := e^{-\sum_{j\geq 1} \frac{B_j}{4j (2j)!}\tr_{\alg{sp}_{2n}}(x^{2j}) }
=
e^{-\sum_{j\geq 1} \frac{B_j}{2j (2j)!}\tr_{\alg{gl}_{n}}(x^{2j}) }
=: \hat{A}_{\alg{gl}_{n}}^2(x).
\]

Taking this into account and using Theorem \ref{thm:tau2kisach}, we obtain that
\begin{align*}
(\Psi(1))_{2k}
&=
(-1)^k\left( \hat{A}_{\alg{sp}_{2n}}(-R) Ch(\frac{1}{2}\tr(R)+F) \right)_{2k} \\
&=
(-1)^k\left( \hat{A}_{\alg{gl}_{n}}^2(-R) Ch(\frac{1}{2}\tr(R)+F) \right)_{2k} \\
&=
(-1)^k\left( Td(-R) Ch(F) \right)_{2k} 
= \left( Td(R) Ch(-F) \right)_{2k}.
\end{align*}

\appendix

\section{Standard definitions}
\label{app:stddef}
\subsection{Hochschild and cyclic (co)homology}
Let $A$ be any algebra with unit $\mathbb{1}$ over $\C$. Let $\bar{A}=A/ \mathbb{1}\cdot \C$. The normalized Hochschild chain complex with coefficients in the $A$-bimodule $M$ is defined as 
\[
C_\bullet(A,M) = M\otimes \bar{A}^{\otimes \bullet}
\]
with differential $b$ defined such that
\begin{multline*}
b(m\otimes a_1\otimes \dots \otimes a_n) 
= m\cdot a_1\otimes a_2\otimes \dots \otimes a_n + \\
+\sum_{j=1}^{n-1} (-1)^j m\otimes a_1\otimes \dots \otimes a_j a_{j+1} \otimes a_n
+(-1)^n a_n\cdot m\otimes a_1\otimes \dots \otimes a_{n-1}.
\end{multline*}

Similarly, the Hochschild cochain complex with values in $M$ is 
\[
C^\bullet(A,M) = \hom(\bar{A}^{\otimes \bullet},M)
\]
with differential $d$ defined as 
\begin{multline*}
(d\phi)(a_1\otimes \dots \otimes a_n) 
= a_1\cdot \phi(a_2\otimes \dots \otimes a_n) + \\
+\sum_{j=1}^{n-1} (-1)^j \phi( a_1\otimes \dots \otimes a_j a_{j+1} \otimes a_n )
+(-1)^n \phi( a_1\otimes \dots \otimes a_{n-1})\cdot a_n.
\end{multline*}

Now let $M=A$. Then there is another differential $B'$ of degree $+1$ on $C_\bullet(A,M)$. It is defined by
\[
B'(a_0\otimes a_1\otimes \dots \otimes a_n)
=
\sum_{j=0}^n (-1)^{nj} 1\otimes a_j\otimes a_{j+1}\otimes \dots \otimes a_{j-1}.
\]
Similarly, the adjoint operator to $B'$ defines a differential $B$ on the Hochschild cochain complex $C^\bullet(A,A^*)$.

One can check that $b$ and $B'$ and $d$ and $B$ anticommute. Let $W$ be some $\C[[u]]$-module for $u$ a formal variable of degree $+2$. The cases of interest in this paper are $W=\C$ the trivial module and $W=\C(u)$ the formal Laurent series. Consider the complex 
\[
CC^\bullet(A) = C^\bullet(A,M)\tW
\]
and equip it with the differential $d+uB$. This complex is called the cyclic complex and its cohomology the cyclic cohomology of $A$. Similarly, for a formal variable $u$ of degree $-2$ one can define
\[
CC_\bullet(A) = C_\bullet(A,M)\tW
\]
with differential $b+uB'$, the cyclic cochain complex. In this paper, in order to not confuse the $u$'s of the chain and cochain complexes, we use $u$ of degree $+2$ as the formal variable for the cochain complex and $u^{-1}$ (degree $-2$) for the chain complex.

\subsection{More structures}
\label{app:morestruct}
Define the following operators on the Hochschild cochain complex, for $a\in A$ :

\begin{align*}
\iota_a : C^\bullet(A,M) \to C^{\bullet-1}(A,M) \\
(\iota_a \phi)(a_0,..,a_n) = \sum_{j=0}^n (-1)^j \phi(a_0,..,a_j,a,a_{j+1},..,a_n).
\end{align*}
where $\phi\in C^{n+1}(A,M)$ and $a_0,..,a_n\in A$.

Define further $L_a:= \aco{d}{\iota_a}$. One can check the following explicit formula for $L_a$:
\begin{multline*}
(L_a\phi)(a_1,..,a_n) = a\cdot \phi(a_1,..,a_n) - \phi(a_1,..,a_n) \cdot a + \sum_{j=1}^n \phi(a_1,..,\co{a_j}{a},..,a_n).
\end{multline*}

\begin{lemma}
\label{lem:morestruct}
The following list of identities holds on $C^\bullet(A,M)$ for any $a, b\in A$ and any $A$-bimodule $M$:
\begin{align*}
\co{d}{L_a} &= 0 
& \co{L_a}{\iota_b} &= \iota_{\co{b}{a}} \\
\co{L_a}{L_b} &= L_{\co{a}{b}}.
\end{align*}
If $M=A^*$, then furthermore
\begin{align*}
\aco{\iota_a}{B} &= 0 
&\aco{L_a}{B} &= 0.
\end{align*}
\end{lemma}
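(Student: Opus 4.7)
The plan is to prove the five identities in essentially two batches: the first three are ``universal'' Cartan calculus identities that follow formally from $L_a := \aco{d}{\iota_a}$ and $d^2=0$, while the last two depend on the special form of Connes' operator $B$ on $C^\bullet(A,A^*)$.

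First I would dispose of $\co{d}{L_a}=0$ instantly: expanding both sides gives $dL_a = d(d\iota_a+\iota_a d) = d\iota_a d$ and $L_a d = d\iota_a d$, using $d^2=0$. Next, for $\co{L_a}{\iota_b}=\iota_{\co{b}{a}}$, I would evaluate both compositions on a cochain $\phi\in C^{n+1}(A,M)$ directly from the explicit formulas. The outer $a\cdot\phi-\phi\cdot a$ part of $L_a$ does not see the insertion of $b$, so those contributions cancel between $L_a\iota_b\phi$ and $\iota_b L_a \phi$. The inner part of $L_a$ produces commutators $\co{a_i}{a}$ at every argument slot; the only terms that do not cancel are those in $L_a\iota_b\phi$ where the commutator is formed with the inserted $b$ itself, yielding $\co{b}{a}$ in place of $b$. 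Reassembling the signed sum over insertion positions recovers $\iota_{\co{b}{a}}\phi$. Then $\co{L_a}{L_b}=L_{\co{a}{b}}$ follows by a one-line Cartan manipulation: writing $L_b = \aco{d}{\iota_b}$, commuting $L_a$ past $d$ via step 1 and applying step 2 gives
\begin{equation*}
\co{L_a}{L_b} = d\,\co{L_a}{\iota_b} + \co{L_a}{\iota_b}\,d = \aco{d}{\iota_{\co{b}{a}}} = L_{\co{b}{a}},
\end{equation*}
and the claim follows (up to the sign convention implicit in the paper's definition of $L_a$).

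For the last two identities I specialize to $M=A^*$ and use the explicit description of $B$ as (a signed sum of) cyclic rotations of the arguments with a $1$ prepended. To prove $\aco{\iota_a}{B}=0$ I would compute $\iota_a B\phi$ and $B\iota_a\phi$ directly on an arbitrary normalized cochain $\phi$. Each is a double sum indexed by an insertion position $j$ (for $\iota_a$) and a cyclic rotation index $k$ (for $B$). One splits the double sum according to whether the inserted $a$ lands in the ``new'' first slot (which is normally a $1$) or elsewhere. The contributions where $a$ would have to be inserted next to the prepended $1$ vanish on the normalized complex; the remaining contributions from $\iota_a B$ and $B\iota_a$ are brought into correspondence by a shift of the cyclic index $k\mapsto k\pm 1$, and the accumulated signs $(-1)^j(-1)^{nk}$ add to zero. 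This bookkeeping is the main technical step.

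Finally, $\aco{L_a}{B}=0$ is a purely formal consequence of $\aco{\iota_a}{B}=0$ together with the already known $\aco{d}{B}=0$: expanding
\begin{equation*}
L_a B \pm B L_a = d\iota_a B + \iota_a dB \pm B d\iota_a \pm B\iota_a d,
\end{equation*}
and using $dB=-Bd$ and $\iota_a B = -B\iota_a$ to rewrite each of the four terms, every pair cancels against another. The main obstacle throughout is the direct combinatorial step---verifying $\co{L_a}{\iota_b}=\iota_{\co{b}{a}}$ and especially $\aco{\iota_a}{B}=0$---both of which amount to matching signed sums of insertions against signed sums of cyclic rotations; once those are in hand, the remaining identities are pure Cartan-style bracket algebra.
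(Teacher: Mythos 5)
Your proposal is correct and follows essentially the same route as the paper: identities one, three and five are derived formally from $L_a=\aco{d}{\iota_a}$, $d^2=0$ and $\aco{d}{B}=0$, while the second and fourth are verified by the same direct bookkeeping of signed insertion/rotation sums (the paper likewise reorganizes the double sum for $\aco{\iota_a}{B}=0$ by grouping terms with $a$ in the same slot). The only point to watch is the same sign ambiguity you already flag with your ``$\pm$'': since $L_a$ has degree $0$, the bracket that actually vanishes in the last identity is the ordinary commutator $\co{L_a}{B}$, and your four-term cancellation goes through precisely in that form.
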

\begin{proof}
The first equation is an immediate consequence of the definition.

The proof of the second equality is a direct calculation
\begin{align*}
(L_a \iota_b \phi)(a_1,..,a_n) 
&= \sum_{ j=0}^n (-1)^j \sum_{k=1}^n\phi(a_1,..,a_j,b,a_{j+1},..,\co{a_k}{a},..,a_n) +\\
   &\quad + \co{a}{ (\iota_b\phi(a_1,..,a_n))} \\
&=(\iota_b L_a)\phi(a_1,..,a_n) - \sum_{ j=0}^n (-1)^j\phi(a_1,..,a_j,\co{b}{a},a_{j+1},..,a_n) \\
&= (\iota_b L_a)\phi(a_1,..,a_n) + (\iota_{\co{a}{b}}\phi)(a_1,..,a_n)
\end{align*}
where $\phi \in C^{n+1}(A,M)$ and $a_1,..,a_n\in A$.

The third equality follows from the second
\begin{align*}
\co{L_a}{L_b} &= \aco{\co{L_a}{d}}{\iota_b} + \aco{d}{\co{L_a}{\iota_b}} \\
&= 0 + \aco{d}{\iota_{\co{a}{b}}} = L_{\co{a}{b}}
\end{align*}

For the fourth equality, we calculate
\begin{align*}
(\iota_a B \phi)(a_0,..,a_n) 
&=
\sum_{ j=0}^{n}(-1)^j \left( 
\sum_{k=0}^{j} (-1)^{(n+1)k} \phi(1,a_k,..,a_{j},a,..,a_{k-1}) \right.\\
  &\quad - \sum_{ k=j+1}^{n}(-1)^{(n+1)k} \phi(1,a_k,..,a_{j},a,..,a_{k-1}) + \\
  &\quad + \left.  (-1)^{(j+1)(n+1)} \phi(1,a, a_{j+1},..,a_{j}) \right) \\
&=
\sum_{ j=1}^{n+1}
\left( \sum_{k=0}^{n+1-j} (-1)^{k+j-1+k(n+1)} - \sum_{ k=n+2-j}^{n}(-1)^{k+j-1-n-1+(k+1)(n+1)} \right) \\
  &\quad \phi(1,a_k,..,a_{k+j-1},a,a_{k+j},..,a_{k-1}) +\\
  &\quad +
\sum_{k=0}^n (-1)^{k(n+1)+k-1}
\phi(1,a, a_{k},..,a_{k-1})
\\
&= 
\sum_{ j=1}^{n+1}
\sum_{k=0}^{n} (-1)^{j-1+kn}
\phi(1,a_k,..,a_{k+j-1},a,a_{k+j},..,a_{k-1})
+ \\ 
&\quad +
\sum_{k=0}^n (-1)^{kn+1}
\phi(1,a, a_{k},..,a_{k-1})
\\
&= -\sum_{ j=0}^{n+1} (-1)^j \sum_{k=0}^n (-1)^{nk} \phi(1,a_k,..,a_{k+j-1},a,a_{k+j},..,a_{k-1}) \\
&= -(B \iota_a \phi)(a_0,..,a_n).
\end{align*}
In the second line we reordered the sum such that terms with $a$ inserted into the same ``slot'' of $\phi$ stand together.

The last equality in the proposition is an easy consequence of the fourth and the fact that $\aco{d}{B}=0$.
\end{proof}

Obviously, the operator $\iota_a$ can be $u$-linearly extended to the cyclic cochain complex $CC^\bullet_W(A)$.
The above discussion then shows that the complex $CC_W^\bullet(A)$ carries the structure of a differential graded $A$-space, where $A$ is seen as a Lie algebra. Let us recall the definition:
\begin{defi}
Let $\alg{g}$ be a Lie algebra. A \emph{differential graded $\alg{g}$-space} is a graded vector space $V^\bullet$ with differential $d$, a left $\alg{g}$-action denoted $L$ on $V^\bullet$, and an operation 
\begin{align*}
\iota : \alg{g} \otimes V^\bullet &\rightarrow V^{\bullet-1} \\
x \otimes v &\mapsto \iota_x v
\end{align*}
subject to the following conditions
\begin{itemize}
\item $\co{\iota_x}{\iota_y}=0$ for all $x,y\in \alg{g}$.
\item $L_x = \co{d}{\iota_x}$.
\item $\co{L_x}{\iota_y} = \iota_{\co{x}{y}}$.
\end{itemize}
\end{defi}

\subsection{Lie algebra cohomology}
Let $\alg{g}$ be a Lie algebra and $\alg{h}\subset \alg{g}$ a subalgebra. Let $M$ be a $\alg{g}$-module. The relative Chevalley-Eilenberg (cochain-) complex with values in $M$ is defined as
\[
C^\bullet(\alg{g},\alg{h};M) = \Hom_{\alg{h}}(\Lambda^\bullet(\alg{g}/\alg{h}), M).
\]
We will think of chains as elements of $\Hom_{\alg{h}}(\wedge^\bullet\alg{g}, M)$ that vanish on the ideal $I$ generated by $\alg{h}$ in the algebra $\Lambda^\bullet \alg{g}$.

It carries a differential $d$ such that
\begin{multline*}
(d\phi)(x_1\wedge \dots \wedge x_n) 
= \sum_{j=1}^n (-1)^{j+1} x_j \cdot \phi(x_1\wedge \dots \wedge \hat{x}_j \wedge\dots \wedge x_n) + \\
\sum_{1\leq i<j\leq n} (-1)^{i+j} \phi(\co{x_i}{x_j}\wedge \dots \wedge \hat{x}_i \wedge\dots \wedge \hat{x}_j \wedge\dots \wedge x_n).
\end{multline*}

One needs to show that this map is well defined, i.e., that $d\phi$ vanishes on the ideal $I$ if $\phi$ does. For this, assume that $x_1=h\in \alg{h}$. Then 
\begin{multline*}
(d\phi)(h\wedge x_2\wedge \dots \wedge x_n) 
= h \cdot \phi(x_1\wedge \dots \wedge \hat{x}_j \wedge\dots \wedge x_n) + \\
\sum_{2\leq j\leq n} (-1)^{j+1} \phi(\co{h}{x_j}\wedge \dots \wedge \hat{x}_j \wedge\dots \wedge x_n) \\
= (h\cdot \phi)(x_2\wedge \dots \wedge x_n) = 0.
\end{multline*}

The cohomology of the above complex is called the relative Lie algebra cohomology $H^\bullet(\alg{g},\alg{h};M)$, and in the special case $\alg{h}=\{0\}$ simply the Lie algebra cohomology $H^\bullet(\alg{g};M)$ of $\alg{g}$.

\section{Relation to Tsygan Formality}
\begin{figure}
\includegraphics[width=.45\textwidth]{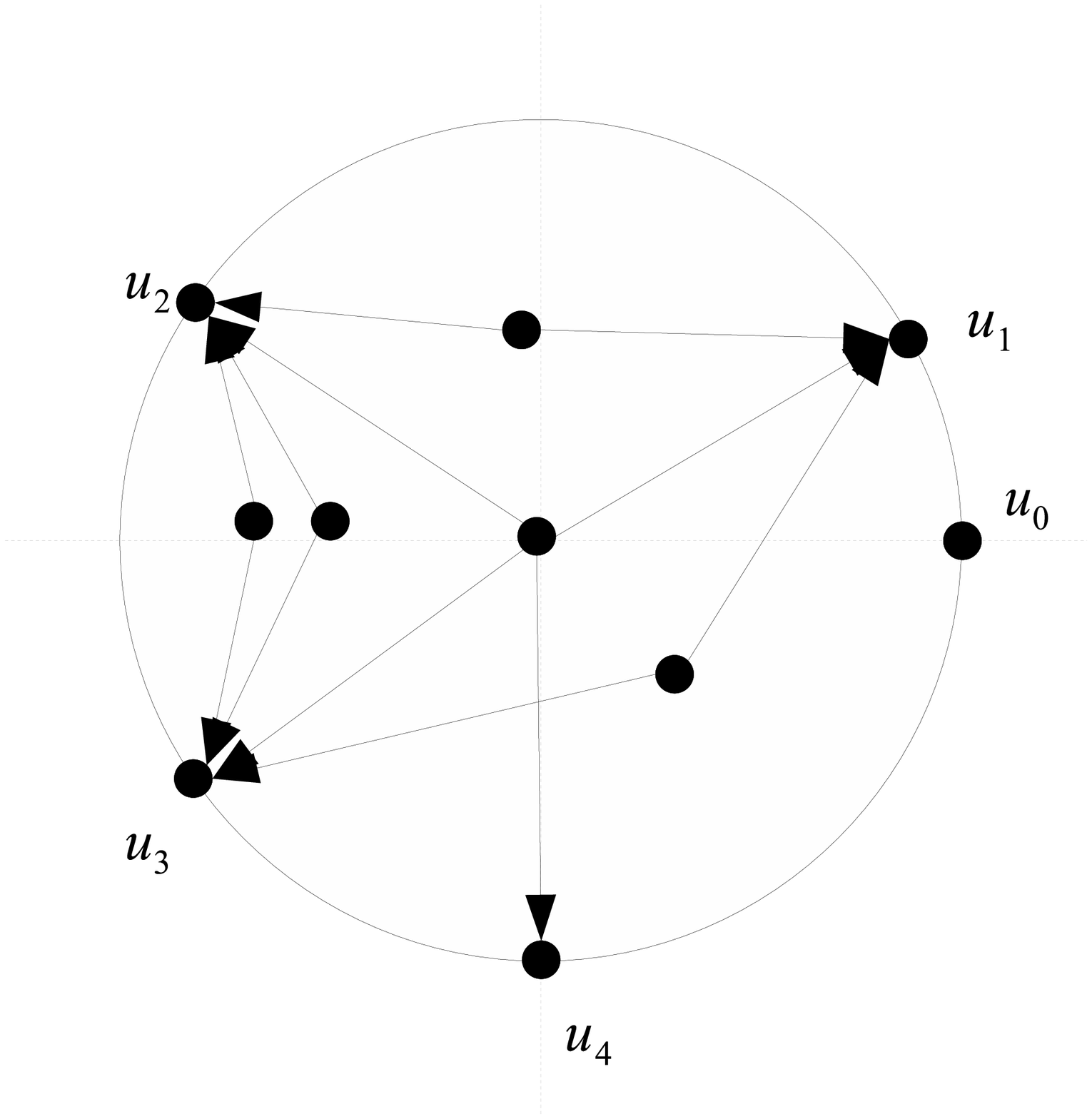}
\includegraphics[width=.45\textwidth]{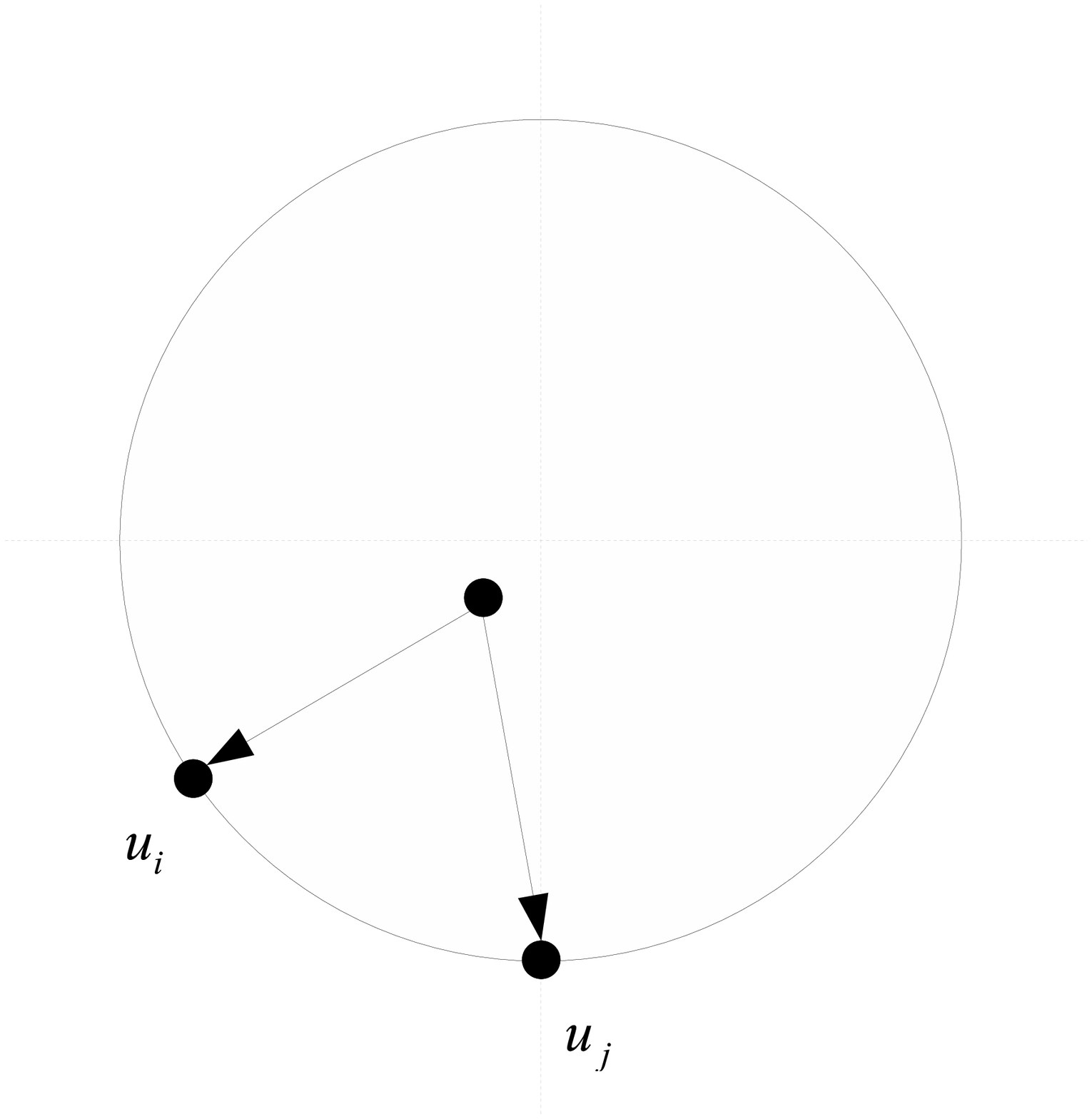}
\caption{\label{fig:shgraphs} On the left, a Shoikhet graph contributing to $\tau_4$ is shown. It is composed out of primitive graphs like the one on the right.}
\end{figure} 

The cocycles $\tau_{2n}$ and $\tau_w$ both can be derived from Tsygan's formality conjectures, which are by now theorems. For the Hochschild case, the relation has been explained briefly by FFS, without mentioning details. 
We explain the relation briefly in this section, but assume that the reader is already familiar with the Tsygan conjectures \cite{tsygan} and their proofs \cite{shoikhet}, \cite{me}. We apologize for being sketchy.

Basically, the cocycle $\tau_w$ can be obtained by localizing Tsygan's $L_\infty$-morphism of modules at the constant Poisson bivector field $\omega^{-1}$ on $\R^{2n}$. The resulting differential on the space of forms is then $u^{-1}d+L_{\omega^{-1}}$, where $L_{\omega^{-1}}$ is the Lie derivate wrt. the Poisson bivector field. As noted by Tsygan, the operator $e^{-u\iota_\omega}$ intertwines this differential with the differential $u^{-1}d$, with respect to which the cohomology is trivially computed, and is concentrated in form degree $0$.
 
Concretely, the components $\tau_{2k}$ can be written as a sum of Shoikhet graphs with $2k+1$ external vertices. We put these vertices at positions $0=u_0<u_1<\cdots u_{2k}\leq 1$ on the circle of unit circumference. An example graph is shown in Figure \ref{fig:shgraphs} (left), from which the general case should be clear. Let us compute the weights of the graphs, using the following Lemma.
\begin{lemma}
The right graph in Figure \ref{fig:shgraphs} has weight $-B_1(u_j-u_i)$.
\end{lemma}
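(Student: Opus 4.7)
The plan is to identify the weight of this primitive graph as the evaluation of Shoikhet's boundary-boundary propagator on its single edge, and then to match that value against $-B_1(u_j-u_i)$ by a direct computation of the propagator.

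First I would recall the setup of Shoikhet's construction that underlies his proof of Tsygan's formality conjecture: graphs are drawn on a fixed reference surface (a cylinder or punctured disk) with external vertices placed on one boundary circle parametrized by $u\in[0,1)$, and to each edge one assigns a closed $1$-form (the propagator) on the compactified configuration space of vertices. The weight of a graph is the integral of the wedge of these propagators over the configuration space of its internal vertices. The primitive graph on the right of Figure \ref{fig:shgraphs} has two external vertices at positions $u_i$ and $u_j$ on the boundary circle, a single edge between them, and no internal vertices, so the configuration space is zero-dimensional and the weight reduces to the value of the boundary-boundary propagator evaluated at the ordered pair $(u_i,u_j)$.

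Next I would write down this propagator explicitly. In Shoikhet's normalization it is the unique (up to sign) harmonic angle form on the boundary circle that is $1$-periodic, has integral zero over one loop, and has a unit jump across the diagonal. On the ordered pair $(u_i,u_j)$ with $0< u_j-u_i<1$ this evaluates to $(u_j-u_i)-\tfrac12=B_1(u_j-u_i)$, and its $1$-periodic extension to $\R$ is precisely the function $b_1$ appearing in Lemma~\ref{lem:22}. The remaining sign $-$ in the statement is a bookkeeping constant that comes from the sign conventions by which an edge is identified with the bidifferential operator $\alpha_{ji}$: the same sign convention that turns the exponent $\sum_{i<j}B_1(u_j-u_i)\alpha_{ji}$ of formula \eref{equ:Sdef} into the exponential of the weighted sum of primitive graphs.

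As a consistency check I would verify that, if one sums primitive subgraphs over all ordered pairs and exponentiates (using that distinct edges contribute independently because there are no internal vertices to be integrated out), one recovers exactly the operator $S_{2n}$ from \eref{equ:Sdef}, so that the ``Shoikhet graph'' on the left of Figure \ref{fig:shgraphs} is indeed the decomposition claimed in the caption. The main obstacle in this argument is a careful tracking of sign conventions---orientations of the configuration space, the chosen orientation of the circle, and the sign in front of $\alpha_{ji}$ when identifying an edge with a bidifferential operator---rather than anything substantive; once these signs are fixed consistently with the normalization of the propagator, the identity $\text{weight}=-B_1(u_j-u_i)$ is immediate.
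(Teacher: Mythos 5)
There is a genuine gap: you have misidentified the combinatorial structure of the primitive graph, and this changes the nature of the computation entirely. The graph on the right of Figure~\ref{fig:shgraphs} is not a single edge joining two boundary points. In Shoikhet's construction the external vertices on the boundary circle carry the functions $a_i$, while each copy of the (constant) Poisson bivector $\omega^{-1}$ sits at an \emph{internal} vertex in the interior of the disk, from which two edges emanate; this is exactly how the bidifferential operator $\alpha_{ji}$ arises. So the primitive graph has one internal vertex $z$ with two edges, one to $u_i$ and one to $u_j$, and its weight is the integral $\int_{z} d\phi(z,u_i)\wedge d\phi(z,u_j)$ of the product of two propagator one-forms over the two-dimensional configuration space of $z$. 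Your claim that ``the configuration space is zero-dimensional and the weight reduces to the value of the boundary-boundary propagator'' would make the lemma a tautology; there is no boundary-boundary propagator edge in this setup, and the nontrivial content of the lemma is precisely the evaluation of this two-dimensional integral.

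The paper computes that integral indirectly: it applies Stokes' theorem on the compactified configuration space of $z$ together with an auxiliary boundary point $u$ moving from $u_i$ to $u_j$, obtaining $0=-\tfrac12+W+(u_j-u_i)$ from the three boundary strata ($z$ and $u$ collapsing onto $u_i$; $u$ reaching $u_j$, which yields the desired weight $W$; and $z$ reaching the center of the disk). Hence $W=\tfrac12-(u_j-u_i)=-B_1(u_j-u_i)$, with the sign coming out of the computation rather than being a ``bookkeeping constant.'' To repair your argument you would need to either reproduce such a Stokes-type argument or compute the angle-form integral over $z$ directly; the step where you read off the weight as a propagator value cannot be made to work as stated.
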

\begin{proof}
Denote by $C$ the configuration space of one point ($z$) in the interior, and one point ($u$) on the boundary, that is allowed to move between $u_i$ and $u_j$. Then by Stokes
\begin{align*}
0 &= \int_{C}d(d\phi(z,u_i)d\phi(z,u)) = \int_{\partial C}d\phi(z,u_i)d\phi(z,u) \\
&= -\frac{1}{2} + \int_{C\cap \{u=u_j\} } d\phi(z,u_i)d\phi(z,u_j) + \int_{u=u_i}^{u_j}du 
\end{align*}
Here the first term is the contribution for the stratum on which $z$ and $u$ approach $u_i$. The second is the stratum where $u$ approaches $u_j$. It conincides with the desired weight. The third is the stratum where $z$ approaches the center of the disk. Hence one obtains the desired result.
\end{proof}

All graphs connecting $u_i$ with $u_j$ thus contribute the operator $\exp(-B_1(u_j-u_i)\alpha_{ij})$. The edges connecting the central vertex to the external vertices yield a contribution (note that $u^k(\omega^{-1})^{\wedge k}$ is inserted there) $u^k\pi_{2k}du_1\wedge \dots du_{2k}$. Putting everything together yields the desired formula:
\[
\int_{0=u_0<u_1<\cdots u_{2k}\leq 1}du_1\wedge \dots du_{2k} \prod_{i<j} \exp(B_1(u_j-u_i)\alpha_{ji}) \pi_{2k}.
\]

\section{Direct proof of Theorem \ref{thm:tau2kisach}}
\label{app:directproof}
We give here an alternative proof of Theorem \ref{thm:tau2kisach}, without using that the special case $k=n$ was already proved by FFS. In fact, we will prove Proposition \ref{prop:tauisachonwn} by showing equation \eref{equ:tauonwn} directly, i.e., by computing the integrals involved.

As in section \ref{sec:cctheproof} we can assume that
\[
v_j = 
\begin{cases}
p_{\alpha_j} q_{\beta_j} q_{\gamma_j} \otimes \mathbb{1}_{r\times r} & \quad \text{for $j=1,..,m$ }\\
q_{\alpha_j} \otimes M_j & \quad \text{for $j=m+1,..,k$ } \\
p_{\alpha_j} \otimes \mathbb{1}_{r\times r} & \quad \text{for $j=k+1,..,2k$ }
\end{cases}
\]
where the $v_j$ are those in \eref{equ:tauonwn}. 

The left hand side of \eref{equ:tauonwn} can be simplified by using (the straightforward analog of) Lemma \ref{lem:22}:
\begin{multline*}
\tau_{2k}(1\otimes (v_1 \wedge \dots \wedge v_{2k}) \\
=
\mu_{2k} \int_{[0,1]^{2k}} du_1\cdots du_{2k}
\prod_{0\leq i< j \leq 2k} e^{b_1(u_j-u_i)\alpha_{ji}} 
\pi_{2k}(1\otimes v_1 \otimes \dots \otimes v_{2k}) 
\end{multline*}

Next, note that by the assumptions on the $v_j$ above:
\begin{multline*}
\pi_{2k}(1\otimes v_1 \otimes \dots \otimes v_{2k}) = \\
=(-1)^{k(k+1)/2}\sum_{\sigma}sgn(\sigma) 1\otimes C(v_1,v_{k+\sigma(1)}) \otimes \dots \otimes C(v_k,v_{k+\sigma(k)})\otimes 1\otimes\cdots \otimes 1
\end{multline*}
where $C$ is the ``curvature'' defined in \eref{equ:curvdef} of section \ref{sec:lacohomcc}. Hence we can integrate out variables $u_{k+1},..,u_{2k}$ and arrive at a sum of expressions of the form
\[
H_k(a_1,..,a_k)=
\mu_{k} \int_{[0,1]^{k}} du_1\cdots du_k
\prod_{1\leq i< j \leq k} e^{b_1(u_j-u_i)\alpha_{ji}} 
\tr(1\otimes a_1 \otimes \dots \otimes a_k)
\]
for $a_j\in \alg{h}_1= \alg{gl}_n\oplus \alg{gl}_r$, $j=1,..,k$. In our case actually $a_1,..,a_m\in \alg{gl}_n$ and $a_{m+1},..,a_k\in \alg{gl}_r$. Then $H_k(a_1,..,a_k)= h_m(a_1,..,a_m) \tr(a_{m+1}\cdots a_k)$ where 
\[
h_m(a_1,..,a_m) 
=
\mu_{m} \int_{[0,1]^{m}} du_1\cdots du_m
\prod_{1\leq i< j \leq m} e^{b_1(u_j-u_i)\alpha_{ji}} 
1\otimes a_1 \otimes \dots \otimes a_m
\] 
 
Note that $h_m(a_1,..,a_m)$ is symmetric in the $a_1,..,a_m$. By polarization, it is hence sufficient to compute $h_m(x,..,x)$ for all $x\in \alg{gl}_n$.

\begin{prop}
For $x\in \alg{gl}_n$:
\[
h_m(x,..,x) = m! \hat{A}_m(x) 
\]
where $\hat{A}_m$ is the $m$-homogeneous component of the A-roof genus $\hat{A}\in (S^\bullet\alg{gl}_n)^{*\alg{gl}_n}$.
\end{prop}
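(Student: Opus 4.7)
The plan is to compute $h_m(x,\dots,x)$ directly by a Wick-type expansion of the integrand and to match the resulting cycle sum against the exponential presentation of $\hat{A}$. Fix the embedding $\alg{gl}_n\hookrightarrow\alg{sp}_{2n}$ sending $x=(x^i_j)$ to $\sum_{i,j}x^i_jp_iq_j$, so that each of the $m$ factors $x$ in $1\otimes x\otimes\cdots\otimes x$ carries one ``$p$-leg'' and one ``$q$-leg''. The operators $\alpha_{ji}$ are mutually commuting polynomials in partial derivatives, so the product of exponentials expands cleanly; since each $x$ is quadratic and the slot holding $1$ is never differentiated, only terms in which every slot $l\in\{1,\dots,m\}$ receives exactly one $p$-derivative and one $q$-derivative survive $\mu_m$.

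Such contractions are parametrised by fixed-point-free permutations $\sigma\in S_m$, where $\sigma(l)$ is the slot whose $q$-leg is paired with the $p$-leg of slot $l$. Tracking matrix indices around any cycle $(l_1,\dots,l_k)$ of $\sigma$ yields the factor $\tr(x^k)$, and direct inspection of which term of $\alpha_{rs}=\sum_i\bigl(({}_r\partial_{2i-1})({}_s\partial_{2i})-({}_r\partial_{2i})({}_s\partial_{2i-1})\bigr)$ contributes to a given pairing, combined with $b_1(-u)=-b_1(u)$, shows that the pairing of the $p$-leg of slot $r$ with the $q$-leg of slot $s$ always contributes weight $b_1(u_r-u_s)$, irrespective of whether $r<s$ or $r>s$. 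Since the integrand then factors over the cycles of $\sigma$,
\[
h_m(x,\dots,x)=\sum_{\sigma\in S_m}\prod_{c}I_{|c|}\,\tr(x^{|c|}),\qquad I_k:=\int_{[0,1]^k}\prod_{i=1}^{k}b_1(u_i-u_{i+1})\,du_1\cdots du_k,
\]
where $c$ runs over cycles of $\sigma$ and $u_{k+1}:=u_1$ inside the definition of $I_k$.

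The central computation is the evaluation of $I_k$, which I would carry out by Fourier expansion. Inserting $b_1(u)=-\sum_{n\neq0}e^{2\pi inu}/(2\pi in)$ and integrating in each $u_i$ forces all Fourier modes to be equal, so
\[
I_k=\frac{(-1)^k}{(2\pi i)^k}\sum_{n\neq0}\frac{1}{n^k}.
\]
For odd $k$ this vanishes (also evident from the substitution $u_i\mapsto-u_i$); for $k=2j$, Euler's identity $2\zeta(2j)=(-1)^{j+1}(2\pi)^{2j}B_{2j}/(2j)!$ gives $I_{2j}=-B_{2j}/(2j)!$.

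To conclude, I would group the sum over $\sigma$ by cycle type. Restricted to $\alg{gl}_n\subset\alg{sp}_{2n}$ the paper's $\hat{A}(x)$ equals $\det\bigl(\tfrac{x/2}{\sinh(x/2)}\bigr)$ (the square root in the $\alg{sp}_{2n}$-definition disappears because the $\alg{gl}_n$-block has eigenvalues in pairs $\{\lambda_i,-\lambda_i\}$, making the ambient $\alg{gl}_{2n}$-determinant a square), and the classical expansion $\log\tfrac{z/2}{\sinh(z/2)}=-\sum_{j\geq1}\tfrac{B_{2j}}{2j(2j)!}z^{2j}$ presents $\hat{A}_m(x)$ as a sum over partitions of $m$ into even parts. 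The standard count $m!/\prod_j(2j)^{n_j}n_j!$ of permutations in $S_m$ with $n_j$ cycles of length $2j$ (odd cycles contributing nothing by the vanishing of $I_{\text{odd}}$), together with the identity $I_{2j}/(2j)=-B_{2j}/(2j(2j)!)$, then turns the cycle sum into $m!\hat{A}_m(x)$. The hard part of this program is the combinatorial bookkeeping: the Wick signs from $\alpha_{rs}$ and the many factors of $2$, $(2j)$, $(2j)!$ and $n_j!$ must line up precisely, and any misplaced factor of $2$ would leave the wrong constant in front of $\hat{A}_m$.
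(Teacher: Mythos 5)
Your argument is correct and follows essentially the same route as the paper's: expand the exponentials, observe that only contraction patterns whose graphs are disjoint unions of cycles survive, identify the contribution of a $k$-cycle as $\tr(x^k)$ times the cyclic integral $\int_{[0,1]^k}\prod_i b_1(u_i-u_{i+1})\,du$, and resum over cycle types into $\exp\bigl(-\sum_{j}\tfrac{B_{2j}}{2j(2j)!}\tr(x^{2j})\bigr)=\hat{A}(x)$. The only substantive divergence is in the evaluation of the cycle weight --- you use the Fourier expansion of $b_1$ and Euler's formula for $\zeta(2j)$ where the paper uses the convolution identity for Bernoulli functions --- and in the bookkeeping, where your sum over oriented (fixed-point-free) permutations with $\tr_{\alg{gl}_n}$ is equivalent to the paper's sum over unoriented cycle graphs with $\tr_{\alg{sp}_{2n}}$ and the symmetry factor $2l$ per cycle; your value $I_{2j}=-B_{2j}/(2j)!$ is the correct one (e.g.\ $I_2=-\int_0^1(v-\tfrac12)^2\,dv=-\tfrac1{12}$), consistent with the paper's final answer.
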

\begin{proof}
Since $x$ is homogeneous quadratic, exactly two derivatives have to act on each $x$ in 
\begin{equation}
\label{equ:hmx}
h_m(x,..,x) 
=
\mu_{m}
\int_{[0,1]^{m}} du_1\cdots du_m
\prod_{1\leq i< j \leq m} e^{b_1(u_j-u_i)\alpha_{ji}} 
(1\otimes x \otimes \dots \otimes x).
\end{equation}
It follows that the exponentials have to be expanded only up to second order $e^{b_1(u_j-u_i)\alpha_{ji}}=1+b_1(u_j-u_i)\alpha_{ji}+(b_1(u_j-u_i)\alpha_{ji})^2+...$.
To each summand in the expansion of the resulting product one can associate a graph with vertex set $\{1,..,m\}$. The graph contains an edge from vertex $i$ to vertex $j$for each term $b_1(u_j-u_i)\alpha_{ji}$ occuring in the corresponding summand. Since exactly two derivatives have to act on each $x$, one easily sees that only those terms contribute, whose associated graphs are unions of cycles. To a cycle of length $j$ associate the term
\[
W_j = -\tr(x^j) \int_{[0,1]^{j}}du_1\cdots du_j b_1(u_1-u_2)b_1(u_2-u_3)\cdots b_1(u_j-u_1).
\]
Here $\tr(\cdot)= \tr_{\alg{sp}_{2n}}(\cdot)$.
Then the summand we started with is a product of $W_j$'s, one for each cycle of length $j$ in its associated graph (up to factors of $\frac{1}{2}$ coming from the expansion of the exponentials).

The integral on the right (``weight of the cycle graph'') can be evaluated to yield
\[
\int_{[0,1]^{j}}du_1\cdots du_j b_1(u_1-u_2)b_1(u_2-u_3)\cdots b_1(u_j-u_1)
 = (-1)^l \frac{B_l}{l!}
\]
where we used the notation and statement of Lemma \ref{lem:cycleweight} of Appendix \ref{app:bernoulli}.

Including the appropriate combinatorial factors we hence get
\begin{align*}
h_m(x,..,x) 
&=
m!
\sum_{\substack{j_2,j_3,.. \\ 2j_2+3j_3+..=m}}
\prod_l \frac{1}{j_l!} \left( \frac{W_{l}}{2 l} \right)^{j_l} \\
&=
m!
\sum_{\substack{j_2,.. \\ 2j_2+3j_3+..=m}}
\prod_l \frac{1}{j_l!} \left( \frac{-B_l\tr(x^l) (-1)^l}{(2l) l!} \right)^{j_l} 
\end{align*}
A term in this sum for fixed $j_2,j_3,..$ corresponds to the sum of all terms in \eref{equ:hmx} composed of $j_2$ cycles of length $2$, $j_3$ cycles of length $3$ etc. The combinatoric factors arise as follows:
\begin{itemize}
\item In the sum of graphs, graphs differing only by permuting whole cycles of the same length are identical, hence the factors $j_l!$ arise.
\item Graphs differing by cyclic permutations of vertices within each cycle are identical, hence the factors $l$ arise.
\item Also, reversing the order of vertices within each cycle does not change the graph, hence the factors of 2. For cycles of length 2 this order reversion coincides with the cyclic permutation above, however, there arises an additional factor of $\frac{1}{2}$ coming from the expansion of the exponential, so that we do not need to treat this case separately.
\end{itemize}

We can simplify this expression by summing over all $m$:
\begin{align*}
\sum_{m\geq 0} \frac{1}{m!} h_m(x,..,x)
= 
\exp\left( - \sum_{l\geq 2} \frac{(-1)^l B_l}{2 l l!} \tr(x^l) \right)
=
{\det}^{\frac{1}{2}}  \left( \frac{x/2}{\sinh(x/2)} \right) 
= \hat{A}(x)
\end{align*}

\end{proof}

Putting everything together, we can conclude the proof of Theorem \ref{thm:tau2kisach}:
\begin{align*}
&\tau_{2k}(1\otimes (v_1 \wedge \dots \wedge v_{2k}) \\
&=
(-1)^{k(k+1)/2}\sum_{\sigma\in S_k}sgn(\sigma) 
\hat{A}_m( C(v_1,v_{k+\sigma(1)}),\dots , C(v_m,v_{k+\sigma(m)}) \\
&\quad \quad \quad
Ch_{k-m}( C(v_{m+1},v_{k+\sigma(m+1)}),\dots , C(v_k,v_{k+\sigma(k)}) \\
&=
(-1)^k
\frac{1}{k!2^k}\sum_{\sigma\in S_{2k}}sgn(\sigma)
(\hat{A}Ch)( C(v_{\sigma(1)},v_{\sigma(2)}),\dots , C(v_{\sigma(2k-1)},v_{\sigma(2k)}) \\
&=
(-1)^k\chi(\hat{A}Ch)(v_1\wedge \dots \wedge v_{2k}).
\end{align*}

\section{Bernoulli Polynomials}
\label{app:bernoulli}
We here recall elementary facts about the Bernoulli polynomials.
The Bernoulli polynomials $B_j(x)$ are defined as the Taylor coefficients of the generating function
\[
\frac{te^{tx}}{e^t-1} = \sum_{j\geq 0} B_j(x) \frac{t^j}{j!}.
\]
So, in particular, 
\begin{equation}
\label{equ:bern1}
B_0\equiv 1.
\end{equation}
We also have
\[
\sum_{j\geq 1} \frac{dB_j}{dx}(x) \frac{t^j}{j!} = \frac{d}{dx}\frac{te^{tx}}{e^t-1} = \sum_{j\geq 1} jB_{j-1}(x) \frac{t^j}{j!}.
\]
And hence 
\begin{equation}
\label{equ:bern2}
\frac{dB_j}{dx}(x)=jB_{j-1}(x).
\end{equation}
Eqns. \eref{equ:bern1} and \eref{equ:bern2} recursively define the Bernoulli polynomials when supplemented with the relation
\begin{equation}
\label{equ:bern3}
\int_0^1 B_j(x)dx = 0
\end{equation}
for $j\geq 1$. This relation easily follows from 
\[
\int_0^1\frac{te^{tx}}{e^t-1}dx = 1.
\]

The Bernoulli numbers $B_j:=B_j(0)$ are the values of the Bernoulli polynomials at zero.

Restricting the Bernoulli polynomials to the interval $[0,1)$ and continuing $\mathbb{Z}$-periodically to $\R$, we get the Bernoulli functions $b_j:\R\rightarrow \R$. Of course $\R / \mathbb{Z} \cong S^1$, and we will use the same symbol for the functions $b_j: S^1 \rightarrow \R$.

\begin{lemma}
\label{lem:bern}
The Bernoulli functions satisfy
\[
b_j = j! (-b_1)^{*j}.
\]
where $*$ is convolution on the unit circle.
\end{lemma}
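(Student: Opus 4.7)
The cleanest approach is Fourier series on the circle $S^1 = \R/\Z$. My plan is to compute the Fourier coefficients of both sides of the claimed identity and invoke the convolution theorem to turn iterated convolution into iterated multiplication on the Fourier side, where the arithmetic is trivial.

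First I would compute $\hat b_1(k) = \int_0^1 (x-1/2) e^{-2\pi i k x}\, dx$. A single integration by parts, using $e^{-2\pi i k}=1$, yields $\hat b_1(k) = -\frac{1}{2\pi i k}$ for $k\neq 0$, while $\hat b_1(0)=0$ by \eref{equ:bern3}. The convolution theorem then gives the Fourier coefficients of $(-b_1)^{*j}$ as $\frac{1}{(2\pi i k)^j}$ for $k\neq 0$ and $0$ for $k=0$.

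Next I would determine $\hat b_j(k)$ by induction on $j$. The recursion \eref{equ:bern2} says $b_j' = j\, b_{j-1}$ (valid away from integer points; on $S^1$ there is an additional distributional contribution at $0$, but its job is precisely to enforce the mean-zero condition \eref{equ:bern3}, which on the Fourier side is $\hat b_j(0)=0$). Since differentiation on $S^1$ multiplies the $k$-th Fourier coefficient by $2\pi i k$, the recursion inductively forces $\hat b_j(k) = -\frac{j!}{(2\pi i k)^j}$ for $k\ne 0$, with base case $\hat b_1(k) = -\frac{1}{2\pi i k}$ just computed. Comparing the two calculations, the Fourier coefficients of $j!(-b_1)^{*j}$ and of $b_j$ match on all modes (zero mean at $k=0$, and the same rational function of $k$ elsewhere), so $b_j = j!(-b_1)^{*j}$ as $L^2$ functions on $S^1$.

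The main subtlety I anticipate is handling the jump discontinuity of $b_1$ at integers, which makes the Fourier series only conditionally convergent there and complicates the interpretation of $(-b_1)'$. This is harmless for the intended applications because the identity is used inside integrals (the weight computation in Appendix A), so equality almost everywhere is all that is needed; alternatively one could bypass the issue entirely by verifying the identity for $j\geq 2$ (where $b_j$ is continuous) and then checking $j=1$ as a trivial base case. A purely real alternative to the Fourier argument is induction: define $f_j := j!(-b_1)^{*j}$ and show $(f_j)' = j f_{j-1}$ together with $\int_0^1 f_j = 0$, which characterizes $b_j$ by \eref{equ:bern1}--\eref{equ:bern3}; the derivative identity uses $(f*g)'=f*g'$ on $S^1$ and the fact that the mean of $-b_1$ vanishes so no spurious constants survive.
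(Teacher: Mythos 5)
Your Fourier-analytic route is genuinely different from the paper's proof, which instead verifies that the functions $j!(-b_1)^{*j}$ satisfy the three relations \eref{equ:bern1}--\eref{equ:bern3} characterizing the Bernoulli functions (essentially the ``purely real alternative'' you sketch at the end). However, your argument fails at the very last step, and in an instructive way. Both of your computations are correct: $\hat b_j(k)=-\frac{j!}{(2\pi i k)^j}$ for $k\neq 0$, while the convolution theorem gives
\[
\bigl(j!\,(-b_1)^{*j}\bigr)^{\wedge}(k)=j!\bigl(-\hat b_1(k)\bigr)^j=+\frac{j!}{(2\pi i k)^j}.
\]
These do \emph{not} match; they differ by a global sign. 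What your calculation actually proves is $b_j=-\,j!\,(-b_1)^{*j}=(-1)^{j+1}j!\,b_1^{*j}$, i.e.\ the Lemma as stated is off by a sign. The case $j=1$, which you dismiss as ``a trivial base case,'' is exactly where the discrepancy is already visible: $1!\,(-b_1)^{*1}=-b_1\neq b_1$. For a check independent of all Fourier conventions, take $j=2$ at $x=0$: since $b_1(-y)=-b_1(y)$ off the integers, $2!\,(-b_1)^{*2}(0)=2\int_0^1 b_1(-y)b_1(y)\,dy=-2\int_0^1(y-\tfrac12)^2\,dy=-\tfrac16$, whereas $b_2(0)=B_2=+\tfrac16$.

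To be fair, the paper's own proof has the same blind spot: it correctly checks the derivative recursion and the mean-zero condition for $f_j:=j!(-b_1)^{*j}$, but the claim that ``the first requirement is trivially satisfied'' silently skips the base case $f_1=-b_1$, which is where the sign enters; since \eref{equ:bern2} and \eref{equ:bern3} determine the sequence only up to the normalization of the first term, the base case is not optional. So the fix is not to massage your computation into agreement but to record the corrected identity $b_j=(-1)^{j+1}j!\,b_1^{*j}$; the sign then propagates into Lemma \ref{lem:cycleweight}, whose right-hand side should read $-(-1)^l B_l/l!$, as one confirms directly for $l=2$ where the integral equals $-\tfrac1{12}$. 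Apart from this, your plan is sound: the convolution theorem is a clean way to organize the argument, and your treatment of the $k=0$ mode and of the distributional derivative of $b_1$ at the integers is adequate for the almost-everywhere equality that the application in Appendix \ref{app:directproof} requires.
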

\begin{proof}
We show that the functions $j! (-b_1)^{*j}$ satisfy \eref{equ:bern1}-\eref{equ:bern3}.
The first requirement is trivially satisfied. The third is also quite obvious since
\[
\int_0^1 b_1*f=(\int_0^1 b_1) (\int_0^1 f) = 0 \cdot (\int_0^1 f) = 0.
\] 
The second requirement is fulfilled since for $j\geq 2$
\[
\frac{d(-b_1)^{*(j-1)}}{dx}= -\frac{d b_1}{dx} * (-b_1)^{*(j-1)} = (\delta-1) * (-b_1)^{*(j-1)} = * (-b_1)^{*(j-1)}
\]
where in the last equality it was used that
\[
1 * (-b_1)^{*(j-1)} = 1\cdot \int_0^1 (-b_1)^{*(j-1)} = 0.
\]
\end{proof}

The result about Bernoulli functions that is needed in Appendix \ref{app:directproof} is the following

\begin{lemma}
\label{lem:cycleweight}
\[
\int_{[0,1]^l} du_1\cdots du_l b_1(u_1-u_2)b_1(u_2-u_3)\cdots b_1(u_l-u_1)  = (-1)^l \frac{B_l}{l!}
\]
\end{lemma}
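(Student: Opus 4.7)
The integrand depends only on the cyclic differences $u_i - u_{i+1}$ (indices mod $l$), so it is invariant under the simultaneous shift $u_i \mapsto u_i + c$ on the unit circle. The plan is to exploit this translation invariance to reduce the multiple integral to the value at $0$ of an $l$-fold circle convolution of $b_1$ with itself, and then to invoke Lemma~\ref{lem:bern} to identify this value with $B_l$.

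Concretely, I would change variables $v_i := u_i - u_{i+1}$ for $i=1,\ldots,l-1$, keeping $u_l$ as a free coordinate; the Jacobian is $1$, and the remaining difference is $u_l - u_1 = -(v_1+\cdots+v_{l-1}) \pmod 1$. The integrand becomes $b_1(v_1)\cdots b_1(v_{l-1})\, b_1(-v_1-\cdots-v_{l-1})$, and the integration over $u_l$ contributes a trivial factor of $1$, leaving
\[
I_l = \int_{[0,1]^{l-1}} b_1(v_1)\cdots b_1(v_{l-1})\, b_1(-v_1-\cdots-v_{l-1})\, dv_1\cdots dv_{l-1}.
\]

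Using the convention $(f*g)(x)=\int_0^1 f(y)g(x-y)\,dy$ for convolution on the unit circle, a straightforward induction on $l$ yields
\[
b_1^{*l}(x) = \int_{[0,1]^{l-1}} b_1(y_1)\cdots b_1(y_{l-1})\, b_1(x - y_1 - \cdots - y_{l-1})\, dy_1\cdots dy_{l-1},
\]
so that $I_l = b_1^{*l}(0)$. Finally, Lemma~\ref{lem:bern} asserts $b_l = l!\,(-b_1)^{*l} = l!\,(-1)^l b_1^{*l}$; evaluating at $0$ and using $b_l(0) = B_l$ gives $I_l = (-1)^l B_l / l!$, which is the claim.

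\textbf{Main obstacle.} The delicate step is sign bookkeeping. One must consistently track the identity $b_1(-x) = -b_1(x)$ (for $x \notin \mathbb{Z}$) when matching the cyclic orientation of the differences $u_i - u_{i+1}$ to the chosen convolution convention, and verify that the parity factor $(-1)^l$ emerging from Lemma~\ref{lem:bern} matches the one in the statement. Since $B_l$ vanishes for odd $l \geq 3$, the only nontrivial instance is even $l$, where all the parity factors collapse to $+1$.
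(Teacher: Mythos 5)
Your proof is correct and takes essentially the same route as the paper's: both exploit translation invariance of the integrand to identify the integral with the circle convolution $b_1^{*l}(0)$ and then invoke Lemma~\ref{lem:bern} together with $b_l(0)=B_l$. You merely make explicit the change of variables and the inductive convolution identity that the paper leaves implicit.
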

\begin{proof}
By invariance wrt. shifts of all variables we can set $u_1=0$ and multiply by the length of the unit interval, i.e., by 1. So the lhs. equals
\[
(b_1)^{*l}(0) := (b_1 * \cdots * b_1)(0) 
\]
where $*$ denotes convolution on the unit circle. But by Lemma \ref{lem:bern} and the well-known fact that $B_l=b_l(0)$, the statement is proven.
\end{proof}

\section{A supersymmetric generalization}
In \cite{markus} M. Engeli has found a supersymmetric generalization of the Hochschild cocycle $\tau_{2n}$. His cocycle is invariant under orthogonal and symplectic transformations of the super-Weyl algebra, but not invariant under orthosymplectic transformations. In this section, we generalize the above constructions to yield an (improper) $\alg{osp}$-basic representative of the \emph{cyclic} cohomology of the super-Weyl algebra. For Hochschild cohomology, we still do not know such a representative.

\subsection{Notations}
Denote the (super-)Weyl algebra by $\WA_{2n|q}$, it is
the algebra of polynomials over $\R^{2n|q}$ with Moyal product
defined by the standard symplectic structure
\[
\omega = \sum_{j=1}^n dp_j\wedge dq_j + \sum_{k=1}^q dc_k\wedge dc_k
\]
with $p_j,q_j$, $j=1,..,n$ being the even and $c_k$, $k=1,..,q$ being
the odd variables. Let
\[
\xi_\mu =
\begin{cases}
p_{(\mu+1)/2} &\quad \text{for $\mu=1,3,..,2n-1$} \\
q_{\mu/2} &\quad \text{for $\mu=2,4,..,2n$} \\
c_{\mu-2n} &\quad \text{for $\mu=2n+1,..,2n+q$}
\end{cases}
\]
and define $\omega_{\mu\nu}$ such that $\omega = \frac{1}{2}
\omega_{\mu\nu}d\xi_\mu\wedge d\xi_\nu$. Let $\epsilon(\mu)\in
\{0,1\}$ be the degree of $\xi_\mu$.
The orthosympletic Lie algebra $\alg{osp}_{2n|q}$ is the Lie
subalgebra of $\WA_{2n|q}$ given by the homogeneous quadratic
polynomials.

To write down the cochain, we first need to
introduce some notations, mostly straightforward generalizations of
those in section \ref{sec:somedefs}, with appropriate signs added.
The symbols ${}_i\pl_\mu$ and ${}_i\pr_\mu$ are defined as follows:
\begin{align*}
{}_i\pr_\mu(a_0\otimes \cdots \otimes a_k) &=
(-1)^{\epsilon(\mu)\sum_{j=0}^{i-1}|a_j|} (a_0\otimes \cdots \otimes
a_{i-1}\otimes \frac{\pr a_i}{\p \xi_\mu} \otimes a_{i+1} \cdots
\otimes a_k) \\
{}_i\pl_\mu(a_0\otimes \cdots \otimes a_k) &=
(-1)^{\epsilon(\mu)\sum_{j=0}^{i}|a_j|} (a_0\otimes \cdots \otimes
a_{i-1}\otimes \frac{\pl a_i}{\p \xi_\mu} \otimes a_{i+1} \cdots
\otimes a_k).
\end{align*}
Define further $\alpha_{rs} = \omega^{\mu\nu} \; {}_r\pl_\mu \;
{}_s\pr_\mu$.
\begin{rem}
It is easily checked that $\alpha_{rs}=-\alpha_{sr}$.
\end{rem} 
 
Let $m_k$ be the multiplication
\[
m_k(a_0\otimes \cdots \otimes a_k) = a_0\cdots a_k
\]
with the product on the right being the ``usual'' graded commutative
product of polynomials. With these notations, the Moyal product can be
written as
\[
a\star b = m_1\circ e^{\frac{1}{2}\alpha_{01}}(a\otimes b).
\]

Let $E$ be the evaluation of a polynomial at 0. Let $\Delta_k$ be the
$k$-simplex, and $\tilde{\Delta}_k$ the configuration space of $k+1$
points on the circle $\R / \Z$, with fixed ordering. We will use
coordinates $u_0,..,u_k$ on $\Delta_k$ and $\tilde{\Delta}_k$, with
the implicit understanding that they make sense on the latter space
modulo $\Z$ only.
We can embed $\Delta_k\subset \tilde{\Delta}_k$ as the subspace $\{u_0=0\}$.

We define the following differential form on $\tilde{\Delta}_k$, with
values in the normalized Hochschild cochain complex:
\[
\eta_k = E\circ m_k \circ \int d\zeta_1\cdots d\zeta_{2n+q}
\exp\left(
-\sum_{\mu,\nu=1}^{2n+q}\omega(\zeta,\zeta) +
\sum_{j=0}^k\sum_{\mu=1}^{2n+q} du_j{}_j\pr_\mu \zeta_\mu
\right)
\circ S_k
\]
where
\[
S_{k} = \prod_{0\leq i< j \leq k} e^{b_1(u_j-u_i)\alpha_{ji}}.
\]

Here $\zeta$ are coordinates on $\Pi \R^{2n|q}\cong \R^{q|2n}$
anticommuting with the forms $du_j$. The integral is over $\Pi
\R^{2n|q}$.

\begin{lemma}
\label{lem:detak}
The form $\eta_k$ satisfies
\[
d \eta_k = E\circ \pd{}{\xi_\mu} m_k \circ \int d\zeta
\omega^{\mu\nu}
\zeta_\nu
e^{
-\omega(\zeta,\zeta) +
\sum_{j=0}^k\sum_{\mu=1}^{2n+q} du_j \;{}_j\pr_\mu \zeta_\mu
}
\circ S_k
\]
\end{lemma}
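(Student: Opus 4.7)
The plan is to compute $d\eta_k$ directly and match it to the claimed right-hand side via a Berezin integration by parts in the $\zeta$-variables. First observe that $\eta_k$'s $u$-dependence is carried entirely by $S_k$, since the exponent
\[
\Phi := -\omega(\zeta,\zeta) + \sum_{j,\mu}du_j\,{}_j\pr_\mu\zeta_\mu
\]
involves $u_j$ only through the odd one-forms $du_j$; therefore $d\eta_k = E\circ m_k\circ\int d\zeta\,e^{\Phi}\cdot dS_k$. On the open stratum of $\tilde\Delta_k$ all differences $u_j-u_i$ lie in $(0,1)$, so $b_1'\equiv 1$, and (using that the $\alpha_{ji}$ commute when acting on distinct slots of the tensor, up to manageable super-signs for coincident slots) one obtains
\[
dS_k = S_k\sum_{i<j}(du_j-du_i)\alpha_{ji} = S_k\sum_j du_j\sum_{i\neq j}\alpha_{ji},
\]
using the antisymmetry $\alpha_{ij}=-\alpha_{ji}$ noted in the remark above.

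The second ingredient is a Berezin integration by parts in $\zeta$. Since $S_k$ is independent of $\zeta$, the vanishing
\[
0 \;=\; \int d\zeta\;\pd{}{\zeta_\nu}\bigl(e^{\Phi}S_k\bigr) \;=\; \int d\zeta\;\pd{\Phi}{\zeta_\nu}\,e^{\Phi}S_k
\]
together with a direct computation of $\pd{\Phi}{\zeta_\nu}$, which from $-\omega(\zeta,\zeta)$ produces a contribution proportional to $\omega^{\nu\alpha}\zeta_\alpha$ and from the linear part of $\Phi$ produces $\sum_l du_l{}_l\pr_\nu$, yields an identity converting the factor $\omega^{\mu\nu}\zeta_\nu$ inside the Berezin integral into a sum of single-slot derivatives of the shape $\sum_l du_l{}_l\pr_\mu$.

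The final step combines this identity with the Leibniz identity $\pd{}{\xi_\mu}\circ m_k = m_k\circ\sum_{l'}{}_{l'}\pr_\mu$, which expresses that a partial derivative of the graded-commutative product distributes over its factors. Substituting both identities into the RHS of the claim reassembles the $\omega$-contracted pairs into precisely the $\omega^{\mu\nu}{}_j\pl_\mu{}_i\pr_\nu$-structure of $dS_k$, once left and right super-derivatives are exchanged via their defining super-sign relation. This matches the expression for $d\eta_k$ obtained above and establishes the lemma.

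The main obstacle is not conceptual but combinatorial. One must track super-signs carefully, arising from (i) the parities of the $\zeta_\mu$, opposite to those of $\xi_\mu$; (ii) the super-antisymmetry and normalization of the quadratic form $\omega(\zeta,\zeta)$; (iii) the distinction between $\pl$ and $\pr$ on the super-Weyl algebra; and (iv) coincident-slot contributions ($i=j$) to $\sum_i\alpha_{ji}$, which should assemble consistently via the super-antisymmetry of $\omega^{\mu\nu}$. A parallel difficulty is the ordering of non-commuting operators inside the product $S_k$ when computing $dS_k$, which must be handled compatibly with how the Berezin integral permits these operators to be passed past one another.
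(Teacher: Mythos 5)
Your proposal is correct and follows essentially the same route as the paper's own computation: the exterior derivative lands only on $S_k$, where $b_1'\equiv 1$ and the antisymmetry $\alpha_{ij}=-\alpha_{ji}$ give $\sum_j du_j\sum_{i\neq j}\alpha_{ji}$, and the vanishing of the total Berezin derivative $\int d\zeta\,\pd{}{\zeta_\nu}\bigl(e^{\Phi}S_k\bigr)=0$ is exactly the integration by parts the paper uses to trade $\sum_l du_l\,{}_l\pr_\nu$ against $\omega^{\mu\nu}\zeta_\nu$, with the Leibniz identity $\pd{}{\xi_\mu}\circ m_k=m_k\circ\sum_{l}{}_{l}\pr_\mu$ closing the loop. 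The sign bookkeeping you flag is real but is handled at the same (sketchy) level of detail in the paper itself.
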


\begin{proof}
\begin{align*}
d \eta_k
&=
 E\circ m_k \circ \int d\zeta
e^{ (...)
}
\circ d S_k \\
&=
-E\circ m_k \circ \int d\zeta
e^{ (...)
}
\circ \sum_{r,s=0}^k \alpha_{rs}du_s  S_k \\
&=
-E\circ m_k \circ \int d\zeta
e^{(...)
}
\circ 
\sum_{r=0}^k {}_r\pl_\mu   
\sum_{s=0}^k \omega^{\mu\nu}\; {}_s\pr_\nu du_s
   S_k \\
&=
-E\circ \pd{}{\xi_\mu} m_k \circ \int d\zeta
\omega^{\mu\nu}
e^{
-\omega(\zeta,\zeta)}
\pdl{}{\zeta_\nu}
e^{
\sum_{j=0}^k\sum_{\mu=1}^{2n+q} du_j\; {}_j\pr_\mu \zeta_\mu
}
S_k \\
&=
E\circ \pd{}{\xi_\mu} m_k \circ \int d\zeta
\omega^{\mu\nu}
\pdl{}{\zeta_\nu}
e^{
-\omega(\zeta,\zeta)}
e^{
\sum_{j=0}^k\sum_{\mu=1}^{2n+q} du_j\; {}_j\pr_\mu \zeta_\mu
}
S_k \\
&=
E\circ \pd{}{\xi_\mu} m_k \circ \int d\zeta
\zeta_\mu
e^{ (...)
}
S_k
\end{align*}
\end{proof}

\subsection{The cocycle}
We can define Hochschild cochains
\[
\tau_{2k} =  \int_{\Delta_{2k}\subset \tilde{\Delta}_{2k}} \eta_{2k}.
\]

\begin{thm}
The cochains $\tau_{2k}$ satisfy 
\[
B\tau_{2k+2} = d\tau_{2k}
\]
\end{thm}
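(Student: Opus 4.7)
The strategy mirrors the non-super proof of Theorem \ref{thm:DBtauis0} in Section \ref{sec:cycformula}: both $B\tau_{2k+2}$ and $d\tau_{2k}$ will be realised as different contributions attached to the form $\eta_{2k+1}$ on the configuration space $\tilde{\Delta}_{2k+1}$, and the desired identity will follow from Stokes' theorem together with the vanishing of an interior integral supplied by Lemma \ref{lem:detak}.

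The first step is to establish two integral representations. For $B\tau_{2k+2}$, I would mimic the derivation of the $B\tau_{2n}$ formula in Section \ref{sec:cycformula}: expanding $(B\tau_{2k+2})(a_1,\ldots,a_{2k+2})$ as a signed sum of cyclic rotations $\tau_{2k+2}(1\otimes C^j(a_1\otimes\cdots\otimes a_{2k+2}))$, a super-analog of Lemma \ref{lem:22} rewrites each summand as the integral of $\eta_{2k+2}(1\otimes a_1\otimes\cdots\otimes a_{2k+2})$ over a cyclic shift of $\Delta_{2k+2}$, whose union is all of $\tilde{\Delta}_{2k+2}$; translation invariance (which holds because the unit $1$ carries no derivative, so $du_0$ never appears) then lets one gauge-fix the overall rotation and reduce the expression to an integral of $\eta_{2k+1}$ on $\tilde{\Delta}_{2k+1}$. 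For $d\tau_{2k}$, I would interpret the Hochschild differential as a boundary contribution: the term $\tau_{2k}(\ldots\otimes a_j\star a_{j+1}\otimes\ldots)$ is, via the Moyal formula $a\star b = m_1\circ e^{\alpha_{01}/2}(a\otimes b)$ and the value $b_1(0^+)=-1/2$, precisely what the $\eta_{2k+1}$ integrand produces in the collision limit $u_{j+1}\to u_j^+$, and the alternating signs arise from the orientation of the corresponding collision stratum of $\partial\tilde{\Delta}_{2k+1}$.

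With these two representations in hand, Stokes' theorem on $\tilde{\Delta}_{2k+1}$ should read
\[
B\tau_{2k+2}-d\tau_{2k}\;=\;\int_{\partial\tilde{\Delta}_{2k+1}}\eta_{2k+1}\;=\;\int_{\tilde{\Delta}_{2k+1}}d\eta_{2k+1}
\]
up to signs fixed by the previous paragraph. It remains to prove the right-hand side vanishes. By Lemma \ref{lem:detak} its integrand is
\[
E\circ \pd{}{\xi_\mu}\,m_{2k+1}\circ \int d\zeta\;\omega^{\mu\nu}\zeta_\nu\,e^{-\omega(\zeta,\zeta)+\sum_{j}du_j\,{}_j\pr_\rho\zeta_\rho}\circ S_{2k+1}.
\]
I would use Berezin integration by parts in $\zeta$ to trade the factor $\zeta_\nu$ (against the Gaussian $e^{-\omega(\zeta,\zeta)}$) for a $\pd{}{\zeta}$-derivative, push this derivative onto the other exponential to produce $-\frac{1}{2}\sum_j du_j\,{}_j\pr_\mu$, and then recognise the result---after composition with the outer $E\circ\pd{}{\xi_\mu}m_{2k+1}$---as a total $\xi$-derivative of a polynomial that vanishes at the origin (using that the complex is normalised, so constant factors drop out).

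The main obstacle will be the boundary identification in the first paragraph: matching the signs and combinatorics of the collision strata to the Hochschild differential requires careful Koszul-sign tracking in the super setting, where the $\zeta_\mu$ have mixed parity and the $du_j$ anticommute with the odd $\zeta$'s. A related point of care is that the Berezin integration by parts used to kill the interior integral must be carried out with the correct super sign conventions; in particular one must verify that $\omega^{\mu\nu}\omega_{\nu\sigma}=\delta^\mu_\sigma$ holds with no parity-dependent phases after all the interchanges of $\zeta$'s and $du_j$'s.
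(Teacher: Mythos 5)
Your reading of the two sides is partly right --- $B\tau_{2k+2}$ does become the integral of $\eta_{2k+1}$ over the full circle configuration space $\tilde{\Delta}_{2k+1}$, and $d\tau_{2k}$ is a sum of face/collision contributions --- but the way you assemble them with Stokes is incoherent, and the step you rely on to close the argument is false. Your displayed identity equates $B\tau_{2k+2}-d\tau_{2k}$, where $B\tau_{2k+2}$ is a \emph{bulk} integral over $\tilde{\Delta}_{2k+1}$, with the \emph{boundary} integral $\int_{\partial\tilde{\Delta}_{2k+1}}\eta_{2k+1}$; these pick out different form-degree components of $\eta_{2k+1}$ (degree $2k+2$ versus $2k+1$) and no version of Stokes produces such an equation. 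More seriously, the conclusion you aim for, $\int_{\tilde{\Delta}_{2k+1}}d\eta_{2k+1}=0$, cannot hold: Lemma \ref{lem:detak} computes $d\eta_k$ to be a specific \emph{nonzero} expression, and if its integral vanished then both $B\tau_{2k+2}$ and $d\tau_{2k}$ would have to vanish, which already fails in the purely even case (there $B\tau_{2n}=L_\omega\tau_{2n}$ is given by a manifestly nonzero integral formula). Your proposed mechanism for the vanishing is also broken on its own terms: after the Berezin integration by parts you are left with $E\circ\pd{}{\xi_\mu}\circ m_{2k+1}(\cdots)$, i.e.\ the $\xi_\mu$-derivative of a product \emph{evaluated at the origin}; a derivative evaluated at $0$ is not zero in general, and normalization of the complex does not rescue this.

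The intended argument uses Lemma \ref{lem:detak} in exactly the opposite way: the fact that $d\eta_{2k+1}\neq 0$ is the point. One shows (i) $d\tau_{2k}=\int_{\partial\Delta_{2k+1}}\eta_{2k+1}=\int_{\Delta_{2k+1}}d\eta_{2k+1}$ by Stokes on the \emph{simplex} $\Delta_{2k+1}=\{u_0=0\}\subset\tilde{\Delta}_{2k+1}$, whose codimension-one faces pull $\eta_{2k+1}$ back to $\eta_{2k}$ precomposed with the Hochschild face maps --- this is where your collision-limit/Moyal observation belongs; and (ii) $B\tau_{2k+2}=\int_{\tilde{\Delta}_{2k+1}}\eta_{2k+1}$ equals the \emph{same} quantity $\int_{\Delta_{2k+1}}d\eta_{2k+1}$, not by Stokes but by substituting $v_j=u_j-u_0$ and integrating out the overall rotation $u_0$: the term $du_0\sum_{j,\mu}{}_j\pr_\mu \zeta_\mu$ in the exponent is precisely what reproduces the $\zeta_\nu$-insertion and the outer $\pd{}{\xi_\mu}$ appearing in the formula of Lemma \ref{lem:detak}. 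Until you replace the vanishing claim by this identification (or an equivalent one), the proof does not go through.
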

\begin{proof}[Sketch of proof]
The Hochchild chains form a simplicial complex. Denote the $j$-th face
map by $b_j$, so that the Hochschild boundary operator on $k$-chains
is given by $\sum_{j=0}^k(-1)^jb_j$. The union of simplices $\bigcup_j
\Delta_j$ also form a simplicial space with $j$-th face map
$\sigma_j:\Delta_k\to \Delta_{k+1}$. One can check that $\eta_{k}\circ
s_j = \sigma_j^* \eta_{k+1}$. Hence we see that
\begin{align*}
d\tau_{2k} &= \sum_{j=0}^k(-1)^j\tau_{2k}\circ b_j \\
&= \sum_{j=0}^k(-1)^j \int_{\Delta_{2k}} \sigma_j^* \eta_{2k+1} \\
&= \int_{\Delta_{2k+1}} d\eta_{2k+1}
\end{align*}
by Stokes' Theorem. 

On the other hand, one can check that 
\[
B\tau_{2k+2} =  \int_{\tilde{\Delta}_{2k+1}} \eta_{2k+1}.
\]
Hence
\begin{align*}
B\tau_{2k+2} &=
\int_{\tilde{\Delta}_{2k+1}}
E\circ m_{2k+1} \circ \int d\zeta
e^{
-\omega(\zeta,\zeta) +
\sum_{j,\mu} du_j\;{}_j\pr_\mu \zeta_\mu
}
\circ S_{2k+1} \\
&=
\int_{\tilde{\Delta}_{2k+1}}
E\circ m_{2k+1} \circ \int d\zeta
e^{
-\omega(\zeta,\zeta) +
\sum_{j,\mu} d(u_j-u_0)\;{}_j\pr_\mu \zeta_\mu +du_0 \sum_{j,\mu} \;{}_j\pr_\mu \zeta_\mu
}
\circ S_{2k+1} \\
&=
\int_{\Delta_{2k+1}}
\sum_{\nu=1}^{2n+q}
E\circ \pderi{}{\xi_\nu} m_{2k+1} \circ \int d\zeta
\zeta_\nu
e^{
-\omega(\zeta,\zeta) +
\sum_{j,\mu} dv_j \; {}_j\pr_\mu \zeta_\mu 
}
\circ S_{2k+1} 
\end{align*}
In the last line we defined new coordinates $v_j=u_j-u_0$ on $\Delta_{2k+1}$ and integrated out $u_0$.
Comparing with the result of Lemma \ref{lem:detak}, the statement of the Theorem follows.
\end{proof}

\nocite{*}
\bibliographystyle{plain}
\bibliography{taucyc2} 

\begin{thebibliography}{10}

\bibitem{calaque}
Damien Calaque and Carlo Rossi.
\newblock Lectures on {D}uflo isomorphisms in lie algebras and complex
  geometry, 2008.
\newblock lecture notes, available at
  http://www.math.ethz.ch/u/felder/Teaching/AutumnSemester2007/Calaque.

\bibitem{connes}
Alain Connes.
\newblock Non commutative differential geometry.
\newblock {\em Inst. Hautes {\'{E}}tudes Sci. Publ. Math.}, 62:257--360, 1985.

\bibitem{markus}
Markus Engeli.
\newblock Traces in deformation quantization and a
  {R}iemann-{R}och-{H}irzebruch formula for differential operators, 2008.

\bibitem{engeli}
Markus Engeli and Giovanni Felder.
\newblock A {R}iemann-{R}och-{H}irzebruch formula for traces of differential
  operators, 2007.

\bibitem{fedosov}
Boris Fedosov.
\newblock {\em Deformation {Q}uantization and {I}ndex {T}heory}.
\newblock Akademie--Verlag, Berlin, 1996.

\bibitem{felder}
Boris Feigin, Giovanni Felder, and Boris Shoikhet.
\newblock Hochschild cohomology of the {W}eyl algebra and traces in deformation
  quantization.
\newblock {\em Duke Math. J.}, 127(3):487--517, 2005.

\bibitem{FLS}
Boris Feigin, Andrey Losev, and Boris Shoikhet.
\newblock Riemann-roch-hirzebruch theorem and topological quantum mechanics,
  2004.

\bibitem{feigin}
Boris Feigin, Andrey Losev, and Boris Shoikhet.
\newblock Riemann-{R}och-{H}irzebruch theorem and {T}opological {Q}uantum
  {M}echanics, 2004.

\bibitem{fsh}
Giovanni Felder and Boris Shoikhet.
\newblock Deformation quantization with traces, 2000.

\bibitem{kontsevich}
Maxim Kontsevich.
\newblock Deformation quantization of {P}oisson manifolds.
\newblock {\em Lett. Math. Phys.}, 66(3):157--216, 2003.

\bibitem{NT}
Ryszard Nest and Boris Tsygan.
\newblock Algebraic index theorem.
\newblock 1995.

\bibitem{ppt}
M.~Pflaum, H.~Posthuma, and X.~Tang.
\newblock Cyclic cocycles on deformation quantizations and higher index
  theorems for orbifolds.
\newblock to appear.

\bibitem{ramadoss}
Ajay~C. Ramadoss.
\newblock Some notes on the {F}eigin {L}osev {S}hoikhet integral conjecture,
  2006.

\bibitem{shoikhet}
Boris Shoikhet.
\newblock A proof of the {T}sygan formality conjecture for chains.
\newblock {\em Adv. Math.}, 179(1):7--37, 2003.

\bibitem{tsygan}
Boris Tsygan.
\newblock Formality conjectures for chains.
\newblock In {\em Differential topology, infinite-dimensional Lie algebras, and
  applications}, volume 194 of {\em Amer. Math. Soc. Transl. Ser. 2}, pages
  261--274. Amer. Math. Soc., Providence, RI, 1999.

\bibitem{me}
Thomas Willwacher.
\newblock Formality of cyclic chains.
\newblock to appear.

\end{thebibliography}

\end{document}